\newcommand\redsout{\bgroup\markoverwith{\textcolor{red}{\rule[0.5ex]{2pt}{1pt}}}\ULon}
\setlist[enumerate]{leftmargin=*,widest=0}
\setlist[description]{leftmargin=*,widest=0}
\numberwithin{equation}{section}
\colorlet{darkblue}{blue!50!black}
\colorlet{darkblue}{red!100!black}
\newtheorem{theorem}{Theorem}[section]
\newtheorem{lemma}[theorem]{Lemma}
\newtheorem{definition}[theorem]{Definition}
\newtheorem{claim}{Claim}
\let\originalleft\left
\let\originalright\right
\renewcommand{\left}{\mathopen{}\mathclose\bgroup\originalleft}
\renewcommand{\right}{\aftergroup\egroup\originalright}
\theoremstyle{definition}
\def\1{\Omega}
\def\d{\mathrm{d}}
\def\dx{\,\mathrm{d}\boldsymbol{x}}
\def\dt{\,\mathrm{dt}}
\def\ds{\,\mathrm{ds}}
\def\dxt{\,\mathrm{d}\boldsymbol{x}\mathrm{d}t}
\def\div{\operatorname{div}}
\def\divv{\mathbf{div}\,}
\def\supp{\operatorname{supp}}
\def\I{\mathrm{I}}
\def\D{\mathrm{D}}
\def\W{\mathrm{W}}
\def\R{\mathbb{R}}
\def\E{\mathrm{E}}
\def\loc{\mathrm{loc}}
\def\L{\mathrm{L}}
\def\HH{\mathrm{H}}
\def\uu{\boldsymbol{u}}  
\def\vv{\boldsymbol{v}}
\def\w{\boldsymbol{w}}
\def\f{\boldsymbol{f}}
\def\00{\boldsymbol{0}}
\def\C{\mathrm{C}}
\def\N{\mathbb{N}}
\def\x{\boldsymbol{x}}
\def\bfb{\mathbf{b}}
\def\bfB{\mathbf{B}}
\def\bfC{\mathbf{C}}
\def\bfX{\mathbf{X}}
\def\bfF{\mathbf{F}}
\def\bfH{\mathbf{H}}
\def\bfL{\mathbf{L}}
\def\bfQ{\mathbf{Q}}
\def\bfV{\mathbf{V}}
\def\bfI{\mathbf{I}}
\def\bfD{\mathbf{D}}
\def\bfQ{\mathbf{Q}}
\def\bfE{\mathbf{E}}
\def\bfA{\mathbf{A}}
\def\bfE{\mathbf{E}}
\def\bfX{\mathbf{X}}
\def\bfb{\mathbf{b}}
\def\bfc{\mathbf{c}}
\def\bf0{\mathbf{0}}
\def\bmcA{\boldsymbol{\mathcal{A}}}
\def\rw{\rm{w}}
\def\bfi{\boldsymbol{\varphi}}
\def\bpsi{\boldsymbol{\psi}}
\def\bxi{\boldsymbol{\xi}}
\def\bVcal{\boldsymbol{\mathcal{V}}}
\DeclareMathAlphabet\mathbfcal{OMS}{cmsy}{b}{n}
\def\bcG{\mathbfcal{G}}
\def\bcH{\mathbfcal{H}}
\newcommand{\Addresses}{{
		\footnote{
			\noindent \textsuperscript{1}Montanuniversit\"at Leoben, Department Mathematik und Informationstechnologie, Franz Josef Strasse 18, 8700
			Leoben, Austria.\\
			\textsuperscript{2}{FCT - Universidade do Algarve, Faro, Portugal \& CIDMA - Universidade de Aveiro, Aveiro, Portugal.}\\
			\textsuperscript{3}Department of Mathematics, Indian Institute of Technology Roorkee-IIT Roorkee,
			Haridwar Highway, Roorkee, Uttarakhand 247667, INDIA.			
			\par\nopagebreak
			\textit{e-mail:} \texttt{Ankit Kumar: ankitkumar.2608@gmail.com, ankit.kumar@unileoben.ac.at.}\\
				\textit{e-mail:} \texttt{Hermenegildo Borges de Oliveira: holivei@ualg.pt.}\\
			\noindent  \textit{e-mail:} \texttt{Manil T. Mohan: maniltmohan@ma.iitr.ac.in, maniltmohan@gmail.com.}
			
				$^\ast$Corresponding author.

			
			\textit{Keywords:} Generalised Navier-Stokes-Voigt equations, fluid dynamics, non-Newtonian fluids, weak solution.

			Mathematics Subject Classification (2020): {Primary 76D05, 35Q30; Secondary 35D30, 76N10, 76A05.}

}}}
\begin{document}	
	
\title[Optimal existence of weak solutions for the generalised NSV equations]{Optimal existence of weak solutions for the generalised Navier-Stokes-Voigt equations
		\Addresses}

	\author[A.~Kumar, H.~B.~de Oliveira and M.~T.~Mohan]
	{Ankit Kumar\orcidlink{0000-0002-8791-0765}\textsuperscript{1}, Hermenegildo Borges de Oliveira\orcidlink{0000-0001-9053-8442}\textsuperscript{2,$\ast$} and Manil T. Mohan\orcidlink{0000-0003-3197-1136}\textsuperscript{3}}

	\maketitle

%
%

    \date{\today}	

	\begin{abstract}
{	In this study, we investigate the incompressible generalised Navier-Stokes-Voigt equations within a bounded domain $\Omega \subset \mathbb{R}^d$, where $d \geq 2$. The governing momentum equation is expressed as:
		$$
	\partial_t(\vv - \kappa \Delta \vv) + \nabla \cdot (\vv \otimes \vv) + \nabla \pi - \nu \nabla \cdot \left( |\mathbf{D}(\vv)|^{p-2} \mathbf{D}(\vv) \right) = \f.
	$$
	Here, for $d \in \{2,3,4\}$, $\vv$ represents the velocity field, $\pi$ denotes the pressure, and $\f$ is the external forcing term. The constants $\kappa$ and $\nu$ correspond to the relaxation time and kinematic viscosity, respectively. The parameter $p \in (1, \infty)$ characterizes the fluid's flow behavior, and $\mathbf{D}(\vv)$ denotes the symmetric part of the velocity gradient $\nabla \vv$.
For power-law exponents satisfying $p>1$ when $2\leq d\leq 3$, and $p> \frac{2d}{d+2}$ for $d=4$, we establish the existence of weak solutions to the generalised Navier-Stokes-Voigt system. Moreover, we prove uniqueness of the weak solution for the same ranges of $p$.
The results are optimal in the sense that $p>1$ is minimal for $2 \leq d \leq 3$. 
Moreover, for $p>\frac{2d}{d+2}$ with $d>3$, the framework uses a Gelfand triple, allowing the Aubin--Dubinski\u{\i} lemma to yield strong convergence of approximate solutions. This convergence is essential for the existence proof and holds precisely for $p>\frac{2d}{d+2}$ when $d=4$.
}
	\end{abstract}
	
	\maketitle

\section{Introduction}\label{sec1}

	\subsection{The model}
	    We study flows of incompressible fluids and constant density with elastic properties governed by the incompressible generalised Navier-Stokes-Voigt (NSV) equations in a bounded domain
		$\1\subset\mathbb{R}^d$, $d\geq 2$, with its boundary denoted by $\partial\1$, during the time interval $[0, T]$, with $0<T<\infty$.
		The momentum equation is assumed here to be perturbed by a relaxation term $\kappa\Delta{\partial_t\vv}$ which makes the problem posed by the following system of equations:
\begin{equation}\label{1.1}
	\left\{	
	\begin{aligned}
	{\partial_t(\vv -\kappa \Delta \vv ) +\divv(\vv\otimes \vv)+\nabla \pi -\nu \divv\left(|\bfD(\vv)|^{p-2}\bfD(\vv)\right)} & = {\f },\quad \text{ in }Q_T,  \\
		\divv\vv &=0,\quad \text{ in }Q_T,  \\
		\vv &=\vv_0,\quad \text{ in }\1\times\{0\},  \\
		\vv &=\boldsymbol{0},\quad \text{ on }\Gamma_T,
	\end{aligned}
\right.
\end{equation}
where $Q_T:=\1\times(0,T)$ and $\Gamma_T:=\partial\1\times(0,T)$.
Here, $\vv=(v_1,\dots,v_d)$ and $\f=(f_1,\dots,f_d)$ are vector-valued functions, $\pi $ is a scalar-valued function, whereas $\nu$ and $\kappa$  are positive constants.
In real-world applications, space dimensions of interest are $d=2$ and $d=3$, and in that case $\vv$ accounts for the velocity field, $\pi $ is the pressure (divided the constant density),
$\f$ represents an external forces field.
Greek letters $\nu$ and $\kappa$ stay for positive constants that, in the dimensions of physics interest, account for the kinematics viscosity and for the relaxation time, that is
 the time required for a viscoelastic fluid relax from a deformed state back to its equilibrium configuration.
The power-law exponent $p$ is a constant that characterizes the flow and is assumed to be such that $1<p<\infty$.
	Capital letters stay for tensor-valued functions, in particular $\bfI$ is the unit tensor and $\bfD(\vv)$ is the symmetric part of the velocity gradient, i.e.~$\bfD(\vv):=\frac{1}{2}\left(\nabla \vv + (\nabla \vv)^{\top}\right)$.
The notation ${\partial_t\vv}$ stays, as usual, for the partial time derivative of $\vv$.

Equation \eqref{1.1}$_1$ can be derived from the principle of conservation of momentum by considering the deviatoric part of the Cauchy stress tensor of the form
\begin{equation}\label{KV:law}
\boldsymbol{\tau}
  = 2\nu\,|\bfD(\vv)|^{p-2}\,\bfD(\vv)
    + 2\kappa\,\partial_t\bfD(\vv),
\end{equation}
which represents a nonlinear viscous response with an additional linear elastic contribution.
When $\kappa=0$, this extra stress reduces to the pure power-law ({Ostwald--de Waele}) relation
\begin{equation}\label{dev:str:OdW}
\boldsymbol{\tau}
  = 2\nu\bigl(|\bfD(\vv)|\bigr)\,\bfD(\vv),
\qquad
\nu(s) = \nu_0\, s^{p-2}
\quad (p>1,\ \nu_0>0),
\end{equation}
so that the apparent viscosity scales like $|\bfD(\vv)|^{p-2}$.
In this setting, shear-thinning (or pseudo-plastic) fluids correspond to exponents $1<p<2$, shear-thickening (or dilatant) fluids to $p>2$, whereas the special case $p=2$ recovers the classical Navier-Stokes equations for Newtonian fluids.
When $p=2$ and $\kappa>0$ (with $\nu>0$) in the constitutive law \eqref{KV:law},
the deviatoric stress tensor reflects a combined effect of
Newton's law of viscosity and Hooke's law of linear elasticity.
For this case, equation~\eqref{KV:law} generalizes the one-dimensional Kelvin-Voigt model to higher dimensions and has been successfully applied to describe the rheology of viscous fluids with elastic properties, such as polymer solutions and certain biological tissues~\cite{Pav-1971,CSS:2002}.
The classical one-dimensional Kelvin-Voigt model,
\begin{equation}\label{eq.1.20}
\tau
  = \nu\,\frac{\partial v}{\partial x}
    + \kappa\,\frac{\partial^2 v}{\partial x \,\partial t},
\end{equation}
provides a simple constitutive relation for linear viscoelastic materials.
This model can be obtained either from the Boltzmann superposition principle or through mechanical analogs consisting of elastic springs and viscous dashpots~\cite{Barnes:2000}.
A natural power-law generalization of \eqref{eq.1.20}, accounting for non-Newtonian viscous fluids endowed with (linear) elastic properties, is given by
\begin{equation}\label{Plaw:1d:KV}
\tau
  = \nu\,\left| \frac{\partial v}{\partial x} \right|^{p-2}\,\frac{\partial v}{\partial x}
    + \kappa\,\frac{\partial^2 v}{\partial x \,\partial t}.
\end{equation}

In fluid mechanics, the choice of a constitutive equation
is largely directed  by the specific application under consideration.
For describing the rheological behavior of many incompressible viscoelastic
fluids, equations of the type \eqref{eq.1.20} and \eqref{Plaw:1d:KV}
are widely regarded as a natural starting point~\cite{BAH:1987a}.
Since a great variety of stress--strain relations may be postulated,
it is customary to impose admissibility criteria in order to restrict
attention to physically meaningful models. For incompressible fluids,
{Oldroyd}~\cite{Oldroyd:1950} proposed that constitutive laws should be:
(a) frame-invariant, (b) deterministic, and (c) local. However,
the elastic tensor $\partial_t\bfD(\vv)$ fails to be objective and thus
does not satisfy the principle of frame indifference.
To illustrate this, let $\bfD(\vv)^\ast$ and
$\partial_t\bfD(\vv)^\ast$ denote the representations of
$\bfD(\vv)$ and $\partial_t\bfD(\vv)$ in another reference frame,
and let $\bfQ=\bfQ(t)$ be an arbitrary orthogonal tensor. Since
$\bfD(\vv)$ is objective, we have
$\bfD(\vv)^\ast=\bfQ\bfD(\vv)\bfQ^\top$.
Objectivity of $\partial_t\bfD(\vv)$ would then require
\begin{equation*}
\partial_t\bfD(\vv)^\ast
=\partial_t\bfQ\bfD(\vv)\bfQ^\top
+\bfQ\partial_t\bfD(\vv)\bfQ^\top
+\bfQ\bfD(\vv)\partial_t\bfQ^\top,
\end{equation*}
so that
\begin{equation*}
\partial_t\bfQ\bfD(\vv)\bfQ^\top+\bfQ\bfD(\vv)\partial_t\bfQ^\top=\00,
\end{equation*}
a condition not generally satisfied. To obtain an admissible model,
the partial time derivative in \eqref{dev:str:OdW} can be replaced by
the convected time derivative~\cite{BAH:1987a}
\begin{equation}\label{eq.1.22}
\bfD(\vv)^\triangledown
=\partial_t\bfD(\vv)
+(\vv\cdot\nabla)\bfD(\vv)
-\Big(\bfD(\vv)\nabla\vv+(\bfD(\vv)\nabla\vv)^\top\Big).
\end{equation}
However, the PDEs obtained from constitutive relations involving
\eqref{eq.1.22} are highly nonlinear and analytically difficult,
so certain simplifications are required to make a rigorous
mathematical study feasible.

\subsection{Literature review}
The mathematical analysis of pure ($p\not=2$) power-law fluids was initiated by {Ladyzhenskaya}~\cite{Lady-Steklov-1967,Lady-LOMI-1968}, followed by {Lions}~\cite{Lions:1969}.
In her celebrated papers~\cite{Lady-Steklov-1967,Lady-LOMI-1968}, {Ladyzhenskaya} proposed studying what she termed the modified Navier--Stokes equations, in which the deviatoric stress tensor is prescribed by~\eqref{dev:str:OdW}. Models of this kind had already been investigated experimentally since the early twentieth century in connection with polymer melts, colloidal suspensions, and other non-Newtonian fluids (see e.g.~\cite{Barnes:2000}). By adopting this form of the stress--strain relation, {Ladyzhenskaya} naturally placed her mathematical framework in line with experimentally observed non-Newtonian behavior, even though her main goal was to establish solvability results rather than to develop a detailed physical theory.
In~\cite{Lady-Steklov-1967,Lady-LOMI-1968}, it was proved
the existence of 3d weak solutions for
\begin{equation*}
p\geq \frac{11}{5},
\end{equation*}
and their uniqueness for
\begin{equation*}
p\geq \frac{5}{2},
\end{equation*}
Subsequently, {Lions}~\cite{Lions:1969} extended these results to the case of a general space dimension $d\geq 2$, and where the diffusion depends on the full velocity gradient $\nabla\vv$ in place of its symmetric part $\bfD(\vv)$. He established the existence for
\begin{equation}\label{Lady:Li:p}
p\geq\frac{3d+2}{d+2},
\end{equation}
and uniqueness for
\begin{equation*}
p\geq \frac{d+2}{2}.
\end{equation*}
The existence results in~\cite{Lady-Steklov-1967,Lady-LOMI-1968,Lions:1969}, in addition to employing Galerkin approximations combined with compactness argumnets and the theory of monotone operators, rely (directly or indirectly) on the continuous embedding

\begin{equation}\label{imb-LL}
\L^p(0,T;\mathbf{V}_p)\cap\L^{\infty}(0,T;\mathbf{H})\hookrightarrow\L^{\frac{p(d+2)}{d}}(0,T;\L^{\frac{p(d+2)}{d}}(\Omega)^d),
\end{equation}
which ensures that $\vv\otimes\vv:\bfD(\bfi)$ is bounded in $\bfL^{1}(Q_T)$
for all $\vv$ and $\bfi$ in the same function spaces whenever \eqref{Lady:Li:p} holds.
See the next section for the definition of the function spaces appearing in~\eqref{imb-LL}.

More than forty years later, {Zhikov}~\cite{Zhikov-2009} succeeded in improving the existence results~\cite{Lady-Steklov-1967,Lady-LOMI-1968,Lions:1969} to the case of
\begin{equation*}
p\geq \max\left\{\frac{3d}{d+2},\frac{d+\sqrt{3d^2+4d}}{d+2}\right\}.
\end{equation*}
The main idea of his proof consisted in a regularization of the deviatoric stress tensor followed by the application of the classical existence results~\cite{Lady-Steklov-1967,Lady-LOMI-1968,Lions:1969}.
The passage to the limit in the regularized problem is justified using arguments from Measure Theory, which allow one to handle the weak convergence of nonlinear terms.
It should be stressed that this lower bound for $p$ had already been obtained by {Ne\v{c}as and his collaborators}~\cite{Necas:book:1996}, although their analysis was carried out in the setting of periodic boundary conditions instead.

Even prior to Zhikov’s result, {Wolf}~\cite{Wolf:2007} established the existence of weak solutions to the Navier-Stokes system with a power-law viscosity for exponents
\begin{equation*}
p > \frac{2(d+1)}{d+2}.
\end{equation*}
In his proof, {Wolf} employed a decomposition of the pressure into a regular (measurable) part and a singular part, which allowed him to handle the nonlinear convective term effectively. This was combined with the $\L^\infty$--truncation method, a technique that enables control of the nonlinearities and provides compactness necessary to pass to the limit in the weak formulation.

Finally, {Diening et al.}~\cite{DRW} extended and improved the existence result~\cite{Wolf:2007} to cover smaller values of the power-law exponent $p$, specifically
\begin{equation*}
\frac{2d}{d+2} < p < 2.
\end{equation*}
In this work, the main tool was the successful combination of the pressure decomposition technique~\cite{Wolf:2007} with the Lipschitz--truncation method, which provides strong control over approximate solutions. This approach enabled the authors to extend the existence theory of weak solutions to a wider range of power-law exponents, particularly in the shear-thinning case.

Concerning existence, the range
\begin{equation*}
1 < p < \frac{2d}{d+2},\quad \mbox{for}\ \ d>3,
\end{equation*}
remains out of reach with the techniques currently available.
The main difficulty lies in the lack of an appropriate Gelfand triple that would allow the application of the Aubin-Dubinski\u{\i} compactness lemma to extract strong convergence of approximate solutions.

With regard to uniqueness for the Navier-Stokes system with a power-law viscosity, the classical results of~\cite{Lady-Steklov-1967,Lady-LOMI-1968,Lions:1969} have only been improved (for $d\geq 3$) up to the range
\begin{equation*}
p \geq \frac{3d+2}{\,d+2\,}.
\end{equation*}
In particular, see the work by {Du and Gunzburger}~\cite{DuGunzburger} as well as the contributions by {Ne\v{c}as and his collaborators}~\cite{Necas:book:1996}, where refined energy methods were used to establish uniqueness of weak (or suitably regular) solutions.

The particular case of the momentum \eqref{1.1}$_1$ with $p=2$ is usually referred to as the Navier-Stokes-Voigt (NSV) equations.
Over the past several decades, the corresponding problem and related models have attracted considerable attention in the mathematical literature due to their analytical tractability and relevance to applications.
The first rigorous study of existence and uniqueness of weak solutions for the NSV equations is due to {Oskolkov}~\cite{Oskolkov:1973}, who coined the name Kelvin-Voigt (initially misspelled Kelvin-Voight) for this type of regularized fluid model.
As noted by {Zvyagin and Turbin}~\cite{zvy-2010}, neither {Kelvin} nor {Voigt} themselves proposed a constitutive stress--strain law or a complete set of governing equations for viscoelastic fluids.
Nevertheless, the name Navier-Stokes-Voigt has become widely accepted, since this model naturally extends the equations originally suggested by {Voigt} (for elastic media with relaxation effects) to encompass viscous fluids with viscoelastic characteristics.
From the viewpoint of applications, one of the earliest uses of this model appears to be due to {Pavlovsky}~\cite{Pav-1971}, who employed a simplified form of the equations to describe weakly concentrated water--polymer mixtures.
More recently, {Ebrahimi et al.}~\cite{EHL:2013} explored the two-dimensional NSV equations as a tool in numerical algorithms, including applications to image inpainting, illustrating the flexibility of the model beyond classical fluid dynamics.
Since the  pioneering contributions of {Oskolkov}~\cite{Oskolkov:1973}, the NSV equations have been investigated extensively with respect to existence, uniqueness, regularity, and asymptotic properties of solutions.
A key analytical feature, first emphasized by {Ladyzhenskaya}~\cite{Ladyzhenskaya:1966} and later by {Oskolkov} himself, is that the additional Voigt term $-\kappa\partial_t\bfD(\vv)$ acts as a regularizing mechanism for the Navier-Stokes system: the three-dimensional transient problem possesses a unique global weak solution.
Before the year 2000, most of the research on this model was carried out by the Russian School of Analysis (see the survey~\cite{zvy-2010} and references therein).
In the following years, the NSV equations have been further examined by many authors in various directions.
{Cao et al.}~\cite{CLT:2006} proposed the system as a physically motivated regularization (for small $\kappa$) of the three-dimensional Navier-Stokes equations for purposes of direct numerical simulation, both under periodic and no-slip Dirichlet boundary conditions.
Several works have addressed the long-time dynamics of solutions:
{Titi and his collaborators}~\cite{KLT:2010,KT:2009}, and {Coti Zelati and Gal}~\cite{C-ZG:2015} established existence and regularity of global attractors,
while {Dou and Qin}~\cite{DYQ:2011} and {Garcia-Luengo et al.}~\cite{G-LM-Rr:2012} analyzed uniform and pullback attractors.
{Berselli and Bisconti}~\cite{BB:2012} analyzed NSV structural stability under viscosity and relaxation variations, while {Berselli and Spirito}~\cite{BS:2017} constructed suitable weak solutions for the 3D Navier-Stokes equations via the Voigt approximation.
The decay rates of solutions in relation to the initial data have been characterized by {Niche}~\cite{Niche:2016} and {Zhao and Zhu}~\cite{ZZ:2015}.
Connections to turbulence modeling have also been highlighted:
{Titi and his collaborators}~\cite{Titi:2010a,Titi:2010b} related the NSV equations to the Bardina turbulence models,
and similar observations were made by {Lewandowski et al.}~\cite{Lewandowski:2018,Lewandowski:2006}.
{Damázio et al.}~\cite{DMS:2016} establish an $\L^q$--theory for the NSV equations in bounded domains, proving existence, uniqueness, and regularity of solutions, while {Baranovskii}~\cite{Baranovskii:2023} analyzes the NSV equations with position-dependent slip boundary conditions, addressing well-posedness and the impact of such boundaries.
{Mohan}~\cite{Mohan:2020} provided a comprehensive study of global solvability, uniqueness, asymptotic behavior, and certain control problems for the three-dimensional Navier-Stokes-Voigt equations with memory effects.
{Antontsev et al.}~\cite{Ant-Oliv-Khom-2021,Ant-Oliv-2022a,Ant-Oliv-Khom-2022b} studied NSV equations with variable density, proving existence, uniqueness, and regularity, including cases with unbounded density, and extended the analysis to long-time behavior, anisotropic viscosity, relaxation, and damping.
{De Oliveira et al.}~\cite{deOliveira:2025} established the existence and uniqueness of strong solutions for the NSV equations with non-negative (possibly vanishing) density, providing detailed regularity and well-posedness results.
{Zvyagin and Turbin}~\cite{Zvy-Turbin-2023a,Zvy-Turbin-2025} also established weak solvability for NSV equations with non-constant density, including finite-order models, even when the initial density is not strictly positive.

The power-law case, with the exponent $p$ varying over the entire interval $(1,\infty)$, has been investigated by relatively few authors to date.
{Antontsev et al.}~\cite{Ant-Oliv-Khom-2019,Ant-Oliv-Khom-2021b,Ant-Khom-2017} studied the Navier-Stokes-Voigt equations incorporating a $p$--Laplacian or anisotropic diffusion (allowing the power-law exponent to differ along distinct spatial directions), as well as relaxation and damping terms, and provided a detailed analysis of existence, uniqueness, conditions for finite-time blow-up, and the large-time behavior of solutions.
In a stochastic setting, {Kumar et al.}~\cite{Kumar:2025} established existence and uniqueness of weak solutions for the generalised (power-law) stochastic Navier-Stokes-Voigt equations, thereby providing a rigorous mathematical framework for studying this model under random forcing and highlighting the impact of stochasticity on the solution dynamics.

\subsection{Objectives and novelties of the paper}
The main goal of this article is to demonstrate the existence of weak solutions to the generalised Navier--Stokes--Voigt equations. The literature on this model appears to be limited, and to the best of our knowledge, the analysis presented here has not yet been explored.
\begin{itemize}
	\item
As mentioned above, there are few works in the literature addressing power-law fluids \cite{Ant-Oliv-Khom-2019,Ant-Oliv-Khom-2021b,Ant-Khom-2017,DRW, Wolf:2007}. In \cite{Wolf:2007}, the authors studied power-law fluids and established existence results for the exponent $p \in \big(\frac{2(d+1)}{d+2},\infty\big)$ using the $\L^\infty$-truncation, while \cite{DRW} extended these results to $p \in \big(\frac{2d}{d+2},\infty\big)$ via Lipschitz-truncation.
In our analysis, the presence of the Voigt term $-\kappa \partial_t\Delta\vv$ allows us to establish existence and uniqueness of the generalised Navier--Stokes--Voigt equations for the power-law exponents satisfying $p>1$ when $2\leq d\leq 3$, and $p> \frac{2d}{d+2}$ for $d=4$, using compactness and monotonicity arguments.
\item Another novelty of this article is that we do not employ the Helmholtz--Hodge projection, which would restrict our analysis to the case $p \in [2,\infty)$. Although the Helmholtz--Hodge projection is well-defined for $p \in (1,2)$, the analysis in this range is highly delicate and creates significant difficulties in controlling the convective term in the momentum equation \eqref{1.1}$_1$. {By avoiding the Helmholtz--Hodge projection, we are able to treat the full ranges {  $p>1$ for $2\leq d\leq 3$, and $p > \frac{2d}{d+2}$ for $d=4$,} directly, without encountering these technical obstacles.}
\end{itemize}

\subsection{Organization}
{The structure of this article is organized as follows:
Section \ref{sec2} lays the groundwork by introducing the necessary function spaces and auxiliary results essential for the analysis.   In Section \ref{main:res}, we present the definition of the weak solution (Definition \ref{def.1}) and state our main results, including Theorems \ref{thm:exist:1}, \ref{thm:exist:2} and \ref{UniT}. 
  Section \ref{Sect:ex:thm:1} focuses on the proof of Theorem \ref{thm:exist:1} for the cases of $p>1$ when $2\leq d\leq 3$, {and $p\geq\frac{d}{2}$ when $d>3$. }
  This section leverages key tools such as the Banach-Alaoglu theorem (Subsection \ref{WC}) and the Minty trick (Subsection \ref{MT}).
  In Section \ref{PD}, we address the decomposition of the pressure term, ensuring that each decomposed component corresponds to a distinct equation, as formalized in Theorem \ref{thrm41}.
 The proof of Theorem \ref{thm:exist:2} {for $\frac{2d}{d+2}<p<2$ when $d=4$}, is carried out in Section \ref{Sect:ex:small}. 
 This section introduces the regularized problem \eqref{1.1:AS1} and its corresponding definition (Definition \ref{def.1:ap}). 
 It also reconstructs the pressure term (Subsection \ref{Subs:Press:Dec}) and employs compactness arguments (Subsection \ref{CP}) alongside the monotonicity method (Subsection \ref{PL}).
 Finally, the article concludes with Section \ref{Sect:unique}, where we establish the uniqueness of the weak solution, as stated in Theorem \ref{UniT}.
}

\section{Auxiliary results}\label{sec2}
\subsection{Function spaces}
Let us  introduce the functions spaces used to characterize the solutions of Fluid Mechanics problems.	Let $\1$ be a domain in $\R^d$, that is,  an open and connected subset of $\R^d$.	
By $\C_0^{\infty}(\1)^d$ we  denote the space of all infinitely differentiable $\R^d-$valued functions with compact support in $\1$.	For $1\leq p\leq \infty$, we denote by $\L^p(\1)^d$ the Lebesgue space consisting of all $\R^d-$valued measurable (equivalent classes of) functions that are $p-$summable over $\1$.	
By $|\1|$ we denote the $d$--Lebesgue measure of $\1$, {and $\lambda^{d}$ accounts for the $d$-dimensional Lebesgue measure.}
The corresponding Sobolev spaces are represented by $\W^{k,p}(\1)^d$, where $k\in\N$.	
When $p=2$, $\W^{k,2}(\1)^d$ are Hilbert spaces that we denote by $\HH^k(\1)^d$.	We define
	\begin{align*}
		\bVcal &:=\big\{\vv\in\C_0^{\infty}(\1)^d:\nabla\cdot \vv=0\big\},  \\
		\bfL_\sigma^p(\1)&:= \text{ the closure of }  \bVcal \text{ in the Lebesgue space } \L^p(\1)^d, \\
		\bfV_p^k&:= \text{ the closure of }  \bVcal \text{ in the Sobolev space } \W^{k,p}(\1)^d,
	\end{align*}
	for $1\leq p<\infty$ and $k\in\N$.
	In the particular case of $p=2$, we denote the spaces $\bfL_\sigma^p(\1)$ and $\bfV_p^k$ by $\bfH$ and $\bfV^k$, respectively.
If $k=1$, we denote $\bfV_p^k$ by $\bfV_p$, and if both $p=2$ and $k=1$, we denote $\bfV_p^k$ solely by $\bfV$.
	Let $(\cdot,\cdot)$ stand for the inner product of the Hilbert space $\L^2(\1)^d$, and we denote by $\langle \cdot,\cdot\rangle ,$ the induced duality product between $\W_0^{1,p}(\1)^d$  and its dual space $\W^{-1,p'}(\1)^d$, as well as between $\L^p(\1)^d$ and its dual $\L^{p'}(\1)^d$, where $\frac{1}{p}+\frac{1}{p'}=1$.
In the sequel, the $\L^p$, $\W^{k,p}$ and $\W^{-k,p'}$ norms will be denoted in short by $\|\cdot\|_p$, $\|\cdot\|_{k,p}$ and $\|\cdot\|_{-k,p'}$.
On $\bfV$, we consider the equivalent norm $\|\vv\|_{\bfV}:=\|\nabla\vv\|_{2}$, $\vv\in\bfV$.

Given a Banach space $\bfB$, we denote by $\L^r(0,T;\bfB)$, with $1\leq r\leq \infty$, {the} space consisting of all Bochner  $\bfB-$valued measurable (equivalent classes of) functions that are $r-$summable over $[0,T]$.
In particular, $\C_{\rw}([0,T];\bfB)$ denotes the subspace of $\L^\infty(0,T;\bfB)$ formed by weakly continuous functions from $[0,T]$ into
$\bfB$.
We also consider the following function space
\begin{equation*}
\bfC^\infty_{0,\div}(Q_T):=\left\{\bfi\in \C^{\infty}(Q_T)^d : \divv \bfi=0\ \text{ with } \supp(\bfi)\Subset\1\times[0,T)\right\}.
\end{equation*}

\subsection{Auxiliary results}
	We recall the following inequalities which are classical in the theory of $p$-Laplace equations.
	\begin{lemma}\label{GM}
		For all $\bfE,\ \bfF\in\R^{d\times d}$, the following assertions hold true:
		\begin{alignat}{2}
			\label{2.2}
			  \frac{1}{2^{p-1}}|\bfE-\bfF|^p&\leq \big(|\bfE|^{p-2}\bfE-|\bfF|^{p-2}\bfF\big):\big(\bfE-\bfF\big), && \text{ for } p\in [2,\infty),\\
			\label{2.3}
			 (p-1)|\bfE-\bfF|^2&\leq \big(|\bfE|^{p-2}\bfE-|\bfF|^{p-2}\bfF\big):(\bfE-\bfF)\big(|\bfE|^p+|\bfF|^p\big)^{\frac{2-p}{p}},&& \text{ for } p\in(1,2).
 		\end{alignat}
	\end{lemma}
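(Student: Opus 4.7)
The plan is to reduce both inequalities to a single integral identity and then branch on the sign of $p-2$. The degenerate case $\bfE=\bfF$ being trivial, assume $\bfE\neq\bfF$ and parametrize $\bfZ(t):=(1-t)\bfF+t\bfE$, $t\in[0,1]$, so that $\bfZ'(t)=\bfE-\bfF$. Differentiating $t\mapsto |\bfZ(t)|^{p-2}\bfZ(t)$, contracting with $\bfE-\bfF$, and applying the fundamental theorem of calculus, I would first establish the representation
\begin{align*}
\bigl(|\bfE|^{p-2}\bfE-|\bfF|^{p-2}\bfF\bigr):(\bfE-\bfF)=\int_0^1\Psi(t)\,dt,
\end{align*}
where
\begin{align*}
\Psi(t):=|\bfZ(t)|^{p-2}|\bfE-\bfF|^2+(p-2)|\bfZ(t)|^{p-4}\bigl(\bfZ(t):(\bfE-\bfF)\bigr)^2.
\end{align*}
Since $p>1$, the integrand is integrable on $[0,1]$ even when $\bfZ$ vanishes at an isolated point.

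For the regime $p\in[2,\infty)$ (inequality \eqref{2.2}), both summands in $\Psi(t)$ are non-negative, so $\Psi(t)\geq|\bfZ(t)|^{p-2}|\bfE-\bfF|^2$. Assuming without loss of generality $|\bfE|\geq|\bfF|$, the triangle inequality gives $|\bfE|\geq\tfrac12|\bfE-\bfF|$, and the reverse triangle inequality gives $|\bfZ(t)|\geq(2t-1)|\bfE|$ for $t\in[\tfrac12,1]$. Combining these pointwise bounds and integrating over the sub-interval where they are active yields
\begin{align*}
\int_0^1|\bfZ(t)|^{p-2}\,dt\;\geq\;\frac{1}{2^{p-1}}\,|\bfE-\bfF|^{p-2},
\end{align*}
and multiplying by $|\bfE-\bfF|^2$ recovers \eqref{2.2}.

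For the regime $p\in(1,2)$ (inequality \eqref{2.3}), the coefficient $p-2$ is negative, so the second summand in $\Psi(t)$ is non-positive. I would control it via Cauchy-Schwarz $\bigl(\bfZ(t):(\bfE-\bfF)\bigr)^2\leq|\bfZ(t)|^2|\bfE-\bfF|^2$, which gives
\begin{align*}
\Psi(t)\;\geq\;(p-1)\,|\bfZ(t)|^{p-2}|\bfE-\bfF|^2.
\end{align*}
Since $|\bfZ(t)|\leq\max(|\bfE|,|\bfF|)\leq(|\bfE|^p+|\bfF|^p)^{1/p}$ and the map $s\mapsto s^{p-2}$ is decreasing on $(0,\infty)$ when $p<2$, one obtains the uniform bound $|\bfZ(t)|^{p-2}\geq(|\bfE|^p+|\bfF|^p)^{(p-2)/p}$. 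Integration in $t$ and direct rearrangement give precisely \eqref{2.3}.

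The main obstacle is pinning down the sharp constant $2^{1-p}$ in \eqref{2.2}: the elementary partition argument above delivers a constant of the correct form but possibly worse by a $p$-dependent factor, so matching the stated one cleanly may require a finer sub-interval analysis or recourse to a Clarkson-type convexity inequality for $x\mapsto|x|^p$. The reasoning for \eqref{2.3} is essentially sharp as presented.
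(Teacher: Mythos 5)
The paper offers no proof of this lemma at all --- it simply cites Lemmas 5.1 and 5.2 of \cite{GM:1975} --- so your self-contained integral-representation argument is a genuinely different and more informative route. Your treatment of \eqref{2.3} is complete and correct: the representation $\bigl(|\bfE|^{p-2}\bfE-|\bfF|^{p-2}\bfF\bigr):(\bfE-\bfF)=\int_0^1\Psi(t)\,dt$ is valid (the singularity at an isolated zero of $\bfZ$ is integrable since $p-2>-1$), the Cauchy--Schwarz step gives $\Psi(t)\geq(p-1)|\bfZ(t)|^{p-2}|\bfE-\bfF|^2$, and $|\bfZ(t)|\leq\max(|\bfE|,|\bfF|)\leq(|\bfE|^p+|\bfF|^p)^{1/p}$ together with the monotonicity of $s\mapsto s^{p-2}$ yields exactly the stated inequality after rearrangement.

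For \eqref{2.2}, however, the argument as written does not establish the statement, and the obstacle you flag is real, not cosmetic. Your sub-interval estimate gives $\int_0^1|\bfZ(t)|^{p-2}\,dt\geq\frac{1}{2(p-1)}|\bfE|^{p-2}\geq\frac{1}{(p-1)2^{p-1}}|\bfE-\bfF|^{p-2}$, i.e.\ the constant $\frac{1}{(p-1)2^{p-1}}$, which is strictly smaller than $2^{1-p}$ for every $p>2$; moreover, no refinement of the partition can repair this while discarding the second summand of $\Psi$, since for $\bfF=-\bfE$ one computes $\int_0^1|\bfZ(t)|^{p-2}\,dt=\frac{2}{(p-1)2^{p-1}}|\bfE-\bfF|^{p-2}$. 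The clean fix bypasses the integral representation for this case: verify the algebraic identity
\begin{equation*}
\bigl(|\bfE|^{p-2}\bfE-|\bfF|^{p-2}\bfF\bigr):(\bfE-\bfF)
=\frac{|\bfE|^{p-2}+|\bfF|^{p-2}}{2}\,|\bfE-\bfF|^2
+\frac{\bigl(|\bfE|^{p-2}-|\bfF|^{p-2}\bigr)\bigl(|\bfE|^{2}-|\bfF|^{2}\bigr)}{2},
\end{equation*}
note that the second term is non-negative for $p\geq2$, and bound
$|\bfE|^{p-2}+|\bfF|^{p-2}\geq\max(|\bfE|,|\bfF|)^{p-2}\geq 2^{2-p}|\bfE-\bfF|^{p-2}$,
which gives \eqref{2.2} with the exact constant $2^{1-p}$. (For the paper's purposes only the qualitative monotonicity \eqref{2.4} is used, so the weaker constant is harmless downstream, but the lemma as stated requires the sharp one.)
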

	\begin{proof}
The proof combines Lemmas 5.1 and 5.2 of \cite{GM:1975}.
	\end{proof}
	
	From (\ref{2.2}) and (\ref{2.3}), one easily gets
	\begin{align}\label{2.4}
		\big(|\bfD(\vv)|^{p-2}\bfD(\vv)-|\bfD(\uu)|^{p-2}\bfD(\uu)\big):\big(\bfD(\vv)-\bfD(\uu)\big)\geq 0,\  \text{ for all }\  \vv,\uu\in \bfV_p,
	\end{align}
	and whenever $1<p<\infty$.
	As a consequence of (\ref{2.4}), the operator
\begin{equation}\label{op:A(u)}
\bfA(\vv):=|\bfD(\vv)|^{p-2}\bfD(\vv)
\end{equation}
is said to be \emph{monotone } for any $p$ such that $1<p<\infty$.

The following result generalises {Simon}~\cite[Corollaries~8 and 9]{JS} versions of Aubin-Dubinski\u{\i} compactness lemma.
Right below, we assume $p_0, \ p_1\in[1,\infty]$, $s_0,\ s_1\in\R$ and $T>0$, denote $\partial_t x$ as the distributional time derivative of $x$, and define $p_\theta$ and $s_\theta$ as follows
\begin{equation*}
\frac{1}{p_\theta}:=\frac{1-\theta}{p_0}+\frac{\theta}{p_1},\quad s_\theta:=(1-\theta)s_0+\theta s_1,\quad \theta\in(0,1).
\end{equation*}
\begin{lemma}\label{lem:Sim:Am}
Let $\E$, $\E_0$ and $\E_1$ be Banach spaces such that 
\begin{equation}\label{int:theta}
\E_1\hookrightarrow\hookrightarrow \E\hookrightarrow \E_0\quad \wedge\quad \|x\|_{\E}\leq C\|x\|_{\E_0}^{1-\theta}\|x\|_{\E_1}^\theta, \ x\ \in \E_1.
\end{equation}
\begin{enumerate}
\item Then
\begin{alignat*}{2}
& \W^{s_0,p_0}(0,T;\E_0)\cap \W^{s_1,p_1}(0,T;\E_1)\hookrightarrow\hookrightarrow \W^{s,p}(0,T;\E),
\quad\mbox{if}\quad s<s_\theta\ \wedge\ s-\frac{d}{p}<s_\theta-\frac{d}{p_\theta}; && \\
& \W^{s_0,p_0}(0,T;\E_0)\cap \W^{s_1,p_1}(0,T;\E_1)\hookrightarrow\hookrightarrow \C^s([0,T];\E),\quad\mbox{if}\quad 0\leq s<s_\theta-\frac{d}{p_\theta}.
\end{alignat*}
\item If, in addition to \eqref{int:theta},
\begin{alignat*}{2}
& \mbox{$\mathrm{X}$ is a bounded subset of $\L^{p_1}(0,T;\E_1)$}, && \\
& \mbox{$\partial \mathrm{X}:=\left\{\partial_t x: x\in \mathrm{X}\right\}$ is bounded in $\L^{p_0}(0,T:\E_0)$},
\end{alignat*}
then $\mathrm{X}$ is relatively compact in:
\begin{alignat*}{2}
& \W^{s,p}(0,T;\E),\quad\mbox{if}\quad 0\leq s<1-\theta\ \wedge\ s-\frac{1}{p}<1-\theta-\frac{1}{p_\theta}; && \\
& \C^s([0,T];\E),\quad\mbox{if}\quad 0\leq s<1-\theta-\frac{1}{p_\theta}.
\end{alignat*}
\end{enumerate}
\end{lemma}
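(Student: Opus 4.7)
The plan is to reduce the lemma to Simon's original compactness results \cite{JS} via an interpolation lifting of the pointwise hypothesis in \eqref{int:theta} to Sobolev--Bochner norms. The key intermediate step is the continuous embedding
\[
\W^{s_0,p_0}(0,T;\E_0) \cap \W^{s_1,p_1}(0,T;\E_1) \hookrightarrow \W^{s_\theta,p_\theta}(0,T;\E);
\]
once this is in hand, the compactness claims of both parts follow from one-dimensional fractional Sobolev/Morrey embeddings combined with the compactness of $\E_1 \hookrightarrow\hookrightarrow \E$.

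For the intermediate embedding, the Lebesgue-in-time case is immediate: the pointwise inequality in \eqref{int:theta} combined with H\"older's inequality with conjugate exponents $p_0/((1-\theta)p_\theta)$ and $p_1/(\theta p_\theta)$ yields
\[
\|x\|_{\L^{p_\theta}(0,T;\E)} \leq C\,\|x\|_{\L^{p_0}(0,T;\E_0)}^{1-\theta}\,\|x\|_{\L^{p_1}(0,T;\E_1)}^{\theta}.
\]
For fractional or integer Sobolev orders $s_i$, applying the same pointwise inequality to $x(t)-x(\tau)$ and integrating against the Gagliardo kernel $|t-\tau|^{-1-s_\theta p_\theta}$ (or, in the integer case, working directly with the weak time derivatives of $x$) propagates the interpolation to the Sobolev--Slobodeckij seminorms, giving the stated intermediate embedding.

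Once the intermediate embedding is secured, standard one-dimensional fractional Sobolev theory on the bounded interval $(0,T)$ provides the compact inclusions $\W^{s_\theta,p_\theta}(0,T;\E) \hookrightarrow\hookrightarrow \W^{s,p}(0,T;\E)$ and $\W^{s_\theta,p_\theta}(0,T;\E) \hookrightarrow\hookrightarrow \C^s([0,T];\E)$ under the strict inequalities appearing in the statement. The compactness at the spatial level, namely $\E_1 \hookrightarrow\hookrightarrow \E$, enters through Ehrling's inequality
\[
\|x\|_\E \leq \epsilon\,\|x\|_{\E_1} + C_\epsilon\,\|x\|_{\E_0}, \quad \forall \, \epsilon > 0,
\]
which is precisely the mechanism used in \cite{JS} to upgrade boundedness to precompactness in the target space. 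Part (2) is then a direct consequence of part (1): the hypotheses on $\X$ and $\partial \X$ mean exactly that $\X$ is bounded in $\W^{0,p_1}(0,T;\E_1)\cap\W^{1,p_0}(0,T;\E_0)$, so applying part (1) with $s_0 = 1$ and $s_1 = 0$ gives $s_\theta = 1-\theta$ and recovers the stated compactness in $\W^{s,p}(0,T;\E)$ and $\C^s([0,T];\E)$.

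The main technical obstacle will be the careful handling of the Gagliardo-seminorm interpolation when $s_0$ and $s_1$ mix integer and non-integer values, and when one of $p_0, p_1$ is an endpoint exponent ($1$ or $\infty$). These subtleties already appear in \cite[Cor.~8 and 9]{JS} for the base case $s_0 = s_1 = 0$ and can be transported to the general setting via standard density and duality arguments, though they require genuine care in the time regularity estimates near $t = 0$ and $t = T$ (for instance, by working first on a slightly larger interval and then restricting, or by using extension operators in the time variable).
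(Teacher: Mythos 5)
The paper does not prove this lemma from scratch; it simply cites \cite[Theorems~1.1 and 5.2]{Amann:2000} (and the proofs of \cite[Corollaries~8 and 9]{JS}), so your attempt to supply an actual argument is going beyond what the paper records. Your first step --- the continuous embedding $\W^{s_0,p_0}(0,T;\E_0)\cap \W^{s_1,p_1}(0,T;\E_1)\hookrightarrow \W^{s_\theta,p_\theta}(0,T;\E)$ obtained by applying the interpolation inequality pointwise (and to differences $x(t)-x(\tau)$ in the Gagliardo seminorm) and then H\"older in time --- is sound, since the exponent identity $\tfrac{1}{p_\theta}+s_\theta=(1-\theta)\bigl(\tfrac{1}{p_0}+s_0\bigr)+\theta\bigl(\tfrac{1}{p_1}+s_1\bigr)$ makes the kernel split correctly.

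The gap is in the second step. The inclusion $\W^{s_\theta,p_\theta}(0,T;\E)\hookrightarrow \W^{s,p}(0,T;\E)$ is \emph{not} compact when $\E$ is infinite-dimensional, even under the strict index inequalities: the constant functions $x_n(t)\equiv e_n$, with $\{e_n\}$ a bounded sequence in $\E$ having no convergent subsequence, are bounded in $\W^{s_\theta,p_\theta}(0,T;\E)$ (their Gagliardo seminorms vanish) yet admit no subsequence converging in $\W^{s,p}(0,T;\E)$ or in $\C^s([0,T];\E)$. One-dimensional Sobolev/Morrey embeddings for vector-valued functions improve time regularity but are never compact in the target variable, so your factorization through the single intermediate space $\W^{s_\theta,p_\theta}(0,T;\E)$ discards exactly the information ($\E_1$-boundedness) that produces compactness. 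The correct mechanism --- which is the actual content of Simon's and Amann's theorems --- is to keep \emph{both} pieces of information simultaneously: the bound in $\W^{s_1,p_1}(0,T;\E_1)$ together with the compactness of $\E_1\hookrightarrow\hookrightarrow\E$ (fed in through Ehrling's inequality $\|x\|_{\E}\le\epsilon\|x\|_{\E_1}+C_\epsilon\|x\|_{\E_0}$) and the time-translate/regularity estimates coming from the $\E_0$-bound, and then invoke a vector-valued Ascoli or Kolmogorov--Riesz criterion. You mention Ehrling's inequality, but only as a remark detached from your logical chain; as written, the chain passes through a false compact embedding. Your reduction of part (2) to part (1) with $s_0=1$, $s_1=0$ is consistent with how Amann organizes the results and is fine as a framing, but it inherits the same defect. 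To repair the proof you would essentially have to reproduce the argument of \cite[Theorem~1.1]{Amann:2000} or \cite[Theorem~5]{JS} rather than reduce to a scalar Sobolev compactness statement.
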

\begin{proof}
For the proof of (1), we address the reader to \cite[Theorem~5.2]{Amann:2000}, and for (2) to \cite[Theorem~1.1]{Amann:2000}.
See also the proofs of \cite[Corollaries~8 and 9]{JS}.
\end{proof}

The following result provides a variant of the classical de~Rham theorem.
\begin{lemma}\label{lemm:BP}
	Assume $1<\gamma<\infty$.
 For each $\uu^\ast\in\W^{-1,\gamma}(\1)^d$ such that
		\begin{equation*}
			\big< \uu^{\ast},\uu\big>=0, \quad \text{ for all }\ \uu\in
			\mathbf{V}_{\gamma'},
		\end{equation*}
		there exists a unique $\pi\in \L^{\gamma}(\1)$, with $\int_{\1}\pi\,\dx=0$, such that
		\begin{equation*}
			\left\langle \uu^\ast,\uu\right\rangle =
			\int_{\1}\pi \divv \uu\,\dx,\quad\text{ for all }\,\uu\in\W_{0}^{1,\gamma'}(\1)^d.
		\end{equation*}
		Moreover, there exists a positive constant $C$ such that
		\begin{equation*}
			\|\pi\|_{\gamma}\leq C \|\uu^\ast\|_{-1,\gamma}.
		\end{equation*}
\end{lemma}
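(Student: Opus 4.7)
My plan is to reduce the statement to the classical Bogovski\u{\i} theorem on the solvability of the divergence equation. Set $\L^{\gamma'}_0(\1):=\{q\in\L^{\gamma'}(\1):\int_\1 q\,\dx=0\}$; this is a closed subspace of $\L^{\gamma'}(\1)$ whose dual is canonically isomorphic to $\L^\gamma(\1)$ modulo constants, and can therefore be represented by the mean-zero elements of $\L^\gamma(\1)$. The main analytic input, which I would cite rather than prove, is the existence of a bounded right inverse of the divergence: for every $q\in\L^{\gamma'}_0(\1)$ there exists $\uu_q\in\W_0^{1,\gamma'}(\1)^d$ with $\divv\uu_q=q$ and $\|\uu_q\|_{1,\gamma'}\leq C\|q\|_{\gamma'}$.

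With this at hand, I would define the linear functional $T\colon \L^{\gamma'}_0(\1)\to\R$ by $T(q):=\langle \uu^\ast,\uu_q\rangle$. The hypothesis on $\uu^\ast$ makes this definition unambiguous: if $\tilde\uu_q$ is a second lifting, then $\uu_q-\tilde\uu_q\in\W_0^{1,\gamma'}(\1)^d$ is divergence-free and hence belongs to $\mathbf{V}_{\gamma'}$ (using the standard density of $\bVcal$ in the divergence-free subspace of $\W_0^{1,\gamma'}(\1)^d$, valid for sufficiently regular $\1$), so that $\langle \uu^\ast,\uu_q-\tilde\uu_q\rangle=0$. Continuity of $T$ is immediate from
\begin{equation*}
|T(q)|\leq \|\uu^\ast\|_{-1,\gamma}\,\|\uu_q\|_{1,\gamma'}\leq C\,\|\uu^\ast\|_{-1,\gamma}\,\|q\|_{\gamma'}.
\end{equation*}
By the duality recalled above there is a unique $\pi\in\L^\gamma(\1)$ with $\int_\1\pi\,\dx=0$ such that $T(q)=\int_\1\pi q\,\dx$ for all $q\in\L^{\gamma'}_0(\1)$, together with the bound $\|\pi\|_\gamma\leq C\|\uu^\ast\|_{-1,\gamma}$.

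It remains to transfer the identity to arbitrary test fields. Given $\uu\in\W_0^{1,\gamma'}(\1)^d$, the function $\divv\uu$ lies in $\L^{\gamma'}_0(\1)$ since $\uu$ vanishes on $\partial\1$. I would then split $\uu=(\uu-\uu_{\divv\uu})+\uu_{\divv\uu}$: the first summand is divergence-free with zero trace, hence belongs to $\mathbf{V}_{\gamma'}$ and is annihilated by $\uu^\ast$, whereas the second contributes $\langle \uu^\ast,\uu_{\divv\uu}\rangle=T(\divv\uu)=\int_\1\pi\,\divv\uu\,\dx$, which yields the desired representation. Uniqueness of the mean-zero $\pi$ is then immediate: any two such candidates would differ by a constant of zero integral, hence vanish.

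The main obstacle — and really the only nontrivial ingredient — is the boundedness of the Bogovski\u{\i} right inverse of the divergence on $\W_0^{1,\gamma'}(\1)^d$ for arbitrary exponents $\gamma'\in(1,\infty)$, together with the density result identifying $\mathbf{V}_{\gamma'}$ with the divergence-free subspace of $\W_0^{1,\gamma'}(\1)^d$. Both are classical for bounded Lipschitz domains, and I would invoke them from the standard literature rather than reproducing the proofs here.
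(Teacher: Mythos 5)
Your argument is correct and is precisely the standard derivation behind the references the paper cites: the paper's own ``proof'' consists only of pointing to Bogovski\u{\i}, Galdi and Temam, and your reduction via the bounded right inverse of the divergence on $\L^{\gamma'}_0(\1)$ plus duality is exactly how those sources establish the result. The two domain-dependent ingredients you flag (the Bogovski\u{\i} estimate and the identification of $\mathbf{V}_{\gamma'}$ with the divergence-free part of $\W_0^{1,\gamma'}(\1)^d$) are indeed the only nontrivial inputs, and both hold under the paper's smoothness assumptions on $\partial\1$.
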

\begin{proof}
	The proof is due to {Bogovski\v{\i}}~\cite{MEB} (see also \cite[Theorems~III.3.1 and III.5.3]{Galdi:2011} and \cite[Proposition I.1.1]{Te}).
\end{proof}

\section{Main results}\label{main:res}

{
We now introduce the notion of weak solutions to the problem \eqref{1.1}, which is the focus of our study.
At this stage, we impose the following minimal assumptions on the problem data to guarantee that the subsequent definition is meaningful,
\begin{equation}\label{hyp:def:ws}
\f\in\bfL^1(Q_T),\qquad  \vv_0\in\bfV,\qquad p>1.
\end{equation}
}

\begin{definition}\label{def.1}
{Assume that conditions \eqref{hyp:def:ws} are satisfied.}
We say that $\vv$ is a weak solution to the problem \eqref{1.1}, if:
\begin{enumerate}
  \item $\vv\in\L^\infty(0,T;\bfV)\cap\L^p(0,T;\bfV_p)$;
  \item ${\partial_t\vv}\in\L^2(0,T;\bfV)$;
  \item {$\vv(0)=\vv_0$;}
  \item The following identity,
\begin{equation}\label{3p3}
\begin{split}
& \hspace{-0.2cm}
-\!\int_{0}^{T}\!\!\!\!\int_{\Omega}\vv\cdot\partial_t\bfi \dxt - \kappa\!\int_{0}^{T}\!\!\!\!\int_{\Omega}\nabla\vv:\nabla\partial_t\bfi\dxt +
\nu\!\int_{0}^{T}\!\!\!\!\int_{\Omega}|\bfD(\vv)|^{p-2}\bfD(\vv):\bfD(\bfi)\dxt  \\
& \hspace{-0.2cm} =
\int_\1\vv_0\cdot\bfi(0)\dx + \kappa\int_\1\nabla\vv_0:\nabla\bfi(0)\dx + \!\int_{0}^{T}\!\!\!\!\int_{\Omega}\f\cdot\bfi\dxt + \!\int_{0}^{T}\!\!\!\!\int_{\Omega} \vv\otimes\vv:\mathds{\nabla}\bfi\dxt,
\end{split}
\end{equation}
holds for all $\bfi\in \bfC^\infty_{\div}(Q_T)$ with $\supp (\bfi) \Subset \Omega\times [0,T)$.
\end{enumerate}
\end{definition}

As usual, condition $\vv(0) = \vv_0$ is interpreted in the sense of weak continuity in time, that is
\begin{equation*}
  \lim_{t \to 0^+} \int_{\Omega} \vv(t) \cdot \bpsi = \int_{\Omega} \vv_0 \cdot \bpsi, \quad \text{ for all }\ \bpsi \in \bVcal.
\end{equation*}

The main result of this work concerns the existence of a weak solution to problem~\eqref{1.1}.
For clarity of presentation, the existence result is divided into two theorems. The first corresponds to the case where the theory of monotone operators is sufficient, while the second addresses the case in which additional analytical tools are required.

For the first existence result, we strength conditions in~\eqref{hyp:def:ws} on the forcing term and initial velocity as follows
\begin{alignat}{2}
\label{Hyp:u0}
  \vv_0&\in\bfV\cap\bfV_p,  \\
\label{Hyp:f:1}
  \f&\in\L^{\infty}(0,T;\W^{-1,2}(\1)^d)\cap\L^{p'}(0,T;\L^{p'}(\1)^d).
\end{alignat}

\begin{theorem}\label{thm:exist:1}
Let $\1\subset\R^d$ be a bounded domain with a \emph{Lipschitz-continuous boundary $\partial\1$}, and assume that hypotheses \eqref{Hyp:u0}-\eqref{Hyp:f:1} are verified.
Suppose that either
\begin{equation}\label{cond:1:thm1}
p>1\qquad \mbox{and}\qquad 2\leq d\leq 3,
\end{equation}
or
{
\begin{equation}\label{cond:2:thm1}
p\geq \frac{d}{2}\qquad \mbox{and}\qquad d>3.
\end{equation}
}
Then there exists, at least, a weak solution $\vv \in \C([0,T];\bfV)\cap \L^p(0,T;\bfV_p)$ to the problem \eqref{1.1}.
	\end{theorem}

For the second existence result, condition~\eqref{Hyp:f:1} can be relaxed to
\begin{equation}\label{Hyp:f}
  \f\in\L^{p'}(0,T;\L^{p'}(\1)^d).
\end{equation}
However, stronger assumptions on the regularity of the domain and on the spatial dimension will be required.

\begin{theorem}\label{thm:exist:2}
Let $\1\subset\R^d$ be a bounded domain with a \emph{smooth boundary $\partial\1$ of class $\C^2$}, and assume that hypotheses \eqref{Hyp:u0} and \eqref{Hyp:f} are verified.
If
\begin{equation}\label{3.7}
\frac{2d}{d+2}<p<\frac{d}{2}\qquad \mbox{and}\qquad d>3,
\end{equation}
then there exists, at least, a weak solution $\vv \in \C([0,T];\bfV)\cap \L^p(0,T;\bfV_p)$ to the problem \eqref{1.1}.
	\end{theorem}

Observe that \eqref{3.7}$_1$ is satisfied whenever $d>2$ and therefore, in particular, for $d>3$.

The proof of Theorem~\ref{thm:exist:2} will require the restriction $d \leq 4$ (see Section~\ref{CP} and \eqref{SE1} below).
Combined with the main assumption \eqref{3.7}, this leaves only the case $d=4$.
Consequently, the admissible range for the exponent $p$ reduces in fact to
\begin{equation}\label{3.7:d=4}
\frac{2d}{d+2}=\frac{4}{3} < p < 2\qquad \mbox{and}\qquad d=4.
\end{equation}
Although physical applications are typically limited to $d \leq 3$, the four-dimensional case is analytically challenging.
Many existence and regularity results for nonlinear PDEs critically depend on the interplay between the spatial dimension and the growth of the nonlinearity.
Dimension $d=4$ represents a borderline dimension where the scaling of Sobolev embeddings and duality estimates is delicate.
Establishing existence in this dimension shows the robustness of the analytical framework and provides insight into the mechanisms that control solutions in higher-dimensional settings.
In this sense, Theorem~\ref{thm:exist:2} extends the applicability of the theory beyond physically relevant dimensions and highlights the structural stability of the approach under more demanding analytical conditions.

The next result addresses the uniqueness of weak solutions, showing that under the assumptions of Theorem~\ref{thm:exist:1}, or Theorem~\ref{thm:exist:2}, the problem \eqref{1.1} admits at most one solution in the sense of Definition~\ref{def.1}.

\begin{theorem}\label{UniT}
	Under the assumptions of Theorems~\ref{thm:exist:1} and \ref{thm:exist:2}, the solution to the problem \eqref{1.1} is {unique in the class of weak solutions defined in Definition \ref{def.1}}.
\end{theorem}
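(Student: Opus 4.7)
The plan is the classical energy-difference argument, where the Voigt regularization supplies enough a priori regularity on the difference $\w := \vv_1 - \vv_2$ to close Grönwall's inequality directly, regardless of how small $p > \tfrac{2d}{d+2}$ is. Let $\vv_1$ and $\vv_2$ be two weak solutions of \eqref{1.1} corresponding to the same $\vv_0$ and $\f$ in the sense of Definition \ref{def.1}. Subtracting the two weak formulations yields a version of \eqref{3p3} with $\vv$ replaced by $\w$ in the time-derivative terms, $|\bfD(\vv)|^{p-2}\bfD(\vv)$ replaced by $\bfA(\vv_1)-\bfA(\vv_2)$, and $\vv\otimes\vv$ replaced by $\vv_1\otimes\vv_1-\vv_2\otimes\vv_2$, with both forcing and initial data dropped. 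By Definition \ref{def.1}, $\w\in\L^\infty(0,T;\bfV)\cap\L^p(0,T;\bfV_p)$, $\partial_t\w\in\L^2(0,T;\bfV)$, and $\w(0)=\boldsymbol{0}$; this suffices to justify, via a standard Steklov averaging/time convolution density argument (cf.\ \cite{Te}), taking $\bfi=\w$ as a test function on $(0,t)$ for any $t\in(0,T]$.

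Integration by parts in the time derivatives then delivers the energy-type identity
\begin{equation*}
\frac{1}{2}\bigl(\|\w(t)\|_2^2+\kappa\|\nabla\w(t)\|_2^2\bigr)
+\nu\!\int_0^t\!\!\int_\1\bigl(\bfA(\vv_1)-\bfA(\vv_2)\bigr):\bfD(\w)\dxs
=\int_0^t\!\!\int_\1\bigl(\vv_1\otimes\vv_1-\vv_2\otimes\vv_2\bigr):\nabla\w\dxs.
\end{equation*}
The second term on the left is nonnegative by the monotonicity inequality \eqref{2.4} and can be discarded. For the right-hand side, the decomposition $\vv_1\otimes\vv_1-\vv_2\otimes\vv_2=\w\otimes\vv_1+\vv_2\otimes\w$, an integration by parts, and the cancellation $\int_\1[(\vv_2\cdot\nabla)\w]\cdot\w\dx=0$ (using $\divv\vv_2=0$ and $\w|_{\partial\1}=\boldsymbol{0}$) reduce the convective contribution to $-\int_0^t\!\int_\1[(\w\cdot\nabla)\vv_1]\cdot\w\dxs$.

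For $2\le d\le 4$ the Sobolev embedding $\HH^1_0(\1)\hookrightarrow\L^4(\1)$ holds, so H\"older's and Poincar\'e's inequalities give
\begin{equation*}
\biggl|\int_\1\bigl[(\w\cdot\nabla)\vv_1\bigr]\cdot\w\dx\biggr|
\leq\|\w\|_4^2\,\|\nabla\vv_1\|_2
\leq C\|\nabla\w\|_2^2\,\|\nabla\vv_1\|_2.
\end{equation*}
Since $\vv_1\in\L^\infty(0,T;\bfV)$, the factor $\|\nabla\vv_1(s)\|_2$ is uniformly bounded in $s$. Setting $\Phi(t):=\|\w(t)\|_2^2+\kappa\|\nabla\w(t)\|_2^2$, the previous steps combine to $\Phi(t)\le C\int_0^t\Phi(s)\ds$ for every $t\in[0,T]$; since $\Phi(0)=0$, Grönwall's lemma forces $\Phi\equiv 0$, and hence $\vv_1\equiv\vv_2$.

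The hard part will be the rigorous justification of testing with $\w$: the weak formulation \eqref{3p3} is stated only for smooth, divergence-free test functions with support compactly contained in $\Omega\times[0,T)$, whereas $\w$ has only the mixed regularity above. The required density argument relies decisively on the improved regularity $\partial_t\w\in\L^2(0,T;\bfV)$ supplied by the Voigt term, which is precisely what fails for the pure Navier--Stokes system and what lets uniqueness survive for the full existence range $p>\tfrac{2d}{d+2}$; the dimensional restriction $d\le 4$ enters only in the final Sobolev embedding step.
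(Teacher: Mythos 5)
Your proposal is correct and follows essentially the same route as the paper: time-mollification (Steklov averaging) of $\w=\vv_1-\vv_2$ to justify testing, discarding the viscous difference by the monotonicity \eqref{2.4}, splitting the convective difference into a vanishing term and one controlled by $\|\w\|_4^2\|\nabla\vv_i\|_2$ via the Sobolev embedding for $d\le 4$, and closing with Gr\"onwall on $\|\w\|_2^2+\kappa\|\nabla\w\|_2^2$. The only (immaterial) difference is that your decomposition leaves the term involving $\nabla\vv_1$ to be estimated, whereas the paper estimates the one involving $\nabla\vv_2$; both lie in $\L^\infty(0,T;\bfV)$, so the arguments coincide.
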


The proofs of Theorems~\ref{thm:exist:1} and \ref{thm:exist:2} are postponed to Sections~\ref{Sect:ex:thm:1} and \ref{Sect:ex:small}, whereas the proof of Theorem~\ref{UniT}  is carried out in Section~\ref{Sect:unique}.

\medskip
The proofs of Theorems~\ref{thm:exist:1} and \ref{thm:exist:2} are distinguished according to the cases \eqref{cond:1:thm1}-\eqref{cond:2:thm1} and \eqref{3.7}.
For Theorem~\ref{thm:exist:1}, the result follows directly by applying the standard theory of monotone operators.
In contrast, the proof of Theorem~\ref{thm:exist:2} requires, at the appropriate points, the introduction of technical tools and auxiliary results, which will be presented in the subsequent sections.

\section{Proof of Theorem~\ref{thm:exist:1}}\label{Sect:ex:thm:1}

{
The proof of Theorem~\ref{thm:exist:1} relies on the following embedding, which can be seen as a refinement of the classical embedding \eqref{imb-LL} used in the existence theory of weak solutions in~\cite{Lions:1969}:
\begin{equation}\label{imb:p:2}
\L^\infty(0,T; \W^{1,2}_0(\Omega)^d)
\cap
\L^\infty(0,T; \W^{1,p}_0(\Omega)^d)
\hookrightarrow
\L^{\rho}(0,T; \L^{\rho}(\Omega)^d),
\end{equation}
where
\begin{equation}\label{def:rho}
\rho := \max\left\{\frac{2d}{d-2},\frac{dp}{d-p}\right\}.
\end{equation}
This embedding follows from the classical Sobolev inequality combined with standard parabolic interpolation arguments.
The value of $\rho$ defined in \eqref{def:rho} is mainly relevant in the case $d>\max\{2,p\}$.
Indeed, when $d=2$ or $d=p$, the exponent $\rho$ can take arbitrarily large values in $[1,\infty)$, while if $d<p$ it is even possible to set $\rho = \infty$.
This reflects the fact that in these borderline or low-dimensional cases the embedding \eqref{imb:p:2} can be made as strong as needed, which greatly simplifies the control of the nonlinear terms.

On the other hand, the information provided solely by
$\L^\infty(0,T; \W^{1,2}_0(\Omega)^d)$ is sufficient to show that
\begin{equation}\label{map:L1}
t
\longmapsto
\int_\Omega
\boldsymbol{v}(t) \otimes \boldsymbol{v}(t)
:
\nabla \boldsymbol{\varphi}(t) \, \mathrm{d}x
\ \in \ \L^1(0,T),
\end{equation}
for all
$\boldsymbol{v},\boldsymbol{\varphi} \in \L^\infty(0,T; \W^{1,2}_0(\Omega)^d)$.
The key point is that \eqref{map:L1} does not impose any restriction on the exponent $p$, and thus this argument applies for all $p>1$.
Moreover, \eqref{map:L1} plays a central role in controlling the convective term, which allows the direct application of the theory of monotone operators to establish existence, in the spirit of~\cite{Ladyzhenskaya1969,Lions:1969}.
}

{
In the following, we present the proof of Theorem~\ref{thm:exist:1}, which is divided into several subsections. The proof is included for the sake of completeness and because some of its arguments will be used later in the proof of Theorem~\ref{thm:exist:2}.
}
\subsection{Approximate solutions}\label{Subs:GA}
We construct a solution to the problem \eqref{1.1} as a limit of suitable Galerkin approximations.
Let $s$ be the smallest positive integer such that
\begin{equation}\label{Ws2->W1infty:p}
\W^{s,2}_0(\1)^d\hookrightarrow \W^{1,\infty}_0(\1)^d,\qquad s>1+\frac{d}{2},
\end{equation}
and let us consider the space $\bfV_s$ associated to $\W^{s,2}_0(\1)^d$ defined in Section \ref{sec2}. 
		By means of separability, there exists a basis $\big\{\mathds{\bpsi}_{k}\big\}_{k\in\N}$ of $\bfV_s$, formed by the eigenfunctions of a suitable spectral problem, that is, orthogonal in $\L^2(\1)^d$ and that can be made orthonormal in $\W^{s,2}_0(\1)^d$ (see~\cite[Theorem~A.4.11]{Necas:book:1996}).
		Given
		$n\in\N$, let us consider the $n-$dimensional space $\bfX^{n}=\mathrm{span}\{\bpsi_{1}, \dots, \bpsi_{n}\}$.
		For each $n\in\N$, we search for approximate solutions {$\vv_n\in\C^1(0,T;\bfX^n)$} of the form
		\begin{align}\label{5.1:p}
			\vv_n(x,t)= \sum_{k=1}^nc_k^n(t)\bpsi_{k}(x),\quad\bpsi_{k}\in \bfX^n,
		\end{align}
		where the coefficients $c_{1}^n(t),\dots,c_{n}^n(t)$
		are solutions of the following $n$ ordinary differential equations, derived from \eqref{3p3},
\begin{equation}\label{5.2:p}%
\begin{split}
&
\frac{\d}{\dt}\left[\big(\vv_n(t),\bpsi_{k}\big)+\kappa\big(\nabla\vv_n(t),\nabla\bpsi_{k}\big)\right] + \nu\big<|\bfD(\vv_n(t))|^{p-2}\bfD(\vv_n(t)),\bfD(\bpsi_{k})\big>
 \\
&=
\big(\f(t),\bpsi_{k}\big) + \big(\vv_n(t)\otimes\vv_n(t):\nabla\bpsi_{k}\big), \qquad k\in\{1,\dots,n\},
\end{split}
\end{equation}
		supplemented with the initial conditions
		\begin{align}\label{5.3:p}
			\vv_n(0)=\vv_{0,n},\quad\mbox{in}\ \1,
		\end{align}
		where {$\vv_{0,n}=P^n(\vv _{0})$}, and $P^n$ denotes the orthogonal projection
\begin{equation}\label{orth:proj:n}
P^n:\bfV\longrightarrow \bfX^n
\end{equation}
so that
		\begin{align*}
			\vv_n(0,x)=\sum_{k=1}^n c_k^n(0)\bpsi_k(x),\quad c_k^n(0)=c_{k,0}^n:=(\vv _{0},\bpsi_k),\quad k\in\{1,\dots,n\}.
		\end{align*}
By the uniform continuity of the orthogonal projection $P^n$, we may assume that
\begin{equation}\label{uniform:cont:p}
\vv_{0,n} \xrightarrow[n\to\infty]{} \vv_{0},\quad \text{ in }\ \W^{1,2}_0(\Omega)^d\cap\bfV_p.
\end{equation}
		Observe that the system (\ref{5.2:p})-(\ref{5.3:p}) can be written in the matrix form as follows,
		\begin{align}\label{5.4:p}
			\bfC \bfc'(t)=\bfb(t), \quad \bfc(0)=\bfc_{n,0},
		\end{align}
		where
		$\bfC=\left\{a^n_{\ell m}\right\}_{\ell,m}^n$, $\bfb(t)=\{b_{\ell}^n(t)\}_{\ell=1}^n$, $\bfc(t)=\{c_m^n(t)\}_{m=1}^n$,  with
		\begin{alignat*}{5}
			 a_{\ell m}^n&:=
			\left(\bpsi_{\ell},\bpsi_{m}\right)+\kappa\left(\nabla\bpsi_{\ell},\nabla\bpsi_{m}\right), \\
			 b_{\ell}^n(t)&:=
			\big(\f(t),\bpsi_{\ell}\big) + \big(\vv_n(t)\otimes\vv_n(t):\nabla\bpsi_{\ell}\big)
			-\nu\big<|\bfD(\vv_n(t))|^{p-2}\bfD(\vv_n(t)),\bfD(\bpsi_{\ell})\big>, \\
			 \bfc(t)&:=(c_{1}^n(t),\dots,c_{n}^n(t))\qquad\mbox{and}\qquad
			\bfc_{0}={(c_{1,0}^n,\dots,c_{n,0}^n)}.
		\end{alignat*}
Taking into account that the family $\big\{\bpsi_{k}\big\}_{k\in\N}$ is linearly independent in $\bfV$,
		$\big(\bfC\bxi,\bxi\big)>0$, for all $\bxi\in\R^n\setminus\{0\}$, we can write (\ref{5.4:p}) in the form
		\begin{align}\label{5.5:p}
			\bfc'(t)=\overline{\bfb(t)}, \quad \bfc(0)=\bfc_{n,0},
		\end{align}
where $\overline{\bfb(t)}=\bfC^{-1}\bfb(t)$.
For a globally Lipschitz-continuous coefficient function $\bfb(\cdot)$, we can use the Carathéodory theorem (see~\cite[Theorem~A.3.4]{Necas:book:1996}) to prove the existence of a solution $\bfc(t)$, in a short interval $[0,T^\ast]\subset[0,T]$, to the system \eqref{5.5:p}.

		\subsection{Uniform estimates}\label{uestimate}
We establish the uniform energy estimates, with respect to $n$, for the solution of the finite-dimensional approximated system corresponding to the system \eqref{1.1}.
		\begin{lemma}\label{thrmUE:p}
Assume $\f\in\L^{p'}(0,T;\L^{p'}(\1)^d)$.
\begin{enumerate}
  \item If $\vv_0\in\bfV$ and $p>1$, then there exists a positive constant $C_1$, independent of $n$, such that the following estimate holds true:
\begin{equation}\label{ae1:un:p}
\begin{split}
 &
\sup_{t\in[0,T]}\left\{\|\vv_n(t)\|^2_{2} + \kappa\|\nabla\vv_n(t)\|_{2}^2\right\} + \nu\int_0^T\|\nabla\vv_n(t)\|^p_{p}\dt
 \\
& \leq C_1\left(\int_0^T\|\f(t)\|^{p'}_{p'}\dt + \|\vv_0\|^2_{2} + \|\nabla\vv_0\|_{2}^2 \right);
\end{split}
\end{equation}
  \item Let $\vv_0 \in \bfV_p$, and assume that either hypothesis \eqref{cond:1:thm1} or \eqref{cond:2:thm1} is satisfied. Then, there exists a positive constant $C_2$, independent of $n$, such that the following estimate holds:
\begin{equation}\label{ae2:un':p}
\begin{split}
 &
 \sup_{t\in[0,T]}\|\nabla\vv_n(t)\|^p_{p} +
\int_0^T\left(\|{\partial_t\vv_{n}}(t)\|_{2}^2+\kappa\|\nabla{\partial_t\vv_{n}}(t)\|^2_{2}\right)\dt
\\
&
 \leq
 C_2\left(\int_0^T\|\f(t)\|^{p'}_{p'}\dt + \|\nabla\vv_0\|^p_{p} \right).
\end{split}
\end{equation}
\end{enumerate}
	\end{lemma}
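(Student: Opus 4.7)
My strategy is the classical energy method applied to the Galerkin ODE \eqref{5.2:p}: I multiply by appropriate combinations of $c_k^n(t)$ or $\tfrac{dc_k^n}{dt}(t)$ and sum over $k=1,\dots,n$, using that each $\bpsi_k\in\bfV_s\subset\bVcal$ so $\vv_n(t)$ is solenoidal with vanishing trace.  The main analytical inputs are Korn's inequality (for $1<p<\infty$), the Sobolev and Poincaré inequalities, and Gronwall's lemma.

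For Part (1), I test \eqref{5.2:p} with $c_k^n(t)$ and sum, which is equivalent to pairing the momentum equation with $\vv_n(t)$ itself.  The time-derivative bracket produces $\tfrac12\tfrac{d}{dt}\bigl(\|\vv_n\|_2^2+\kappa\|\nabla\vv_n\|_2^2\bigr)$; the convective contribution vanishes by the standard skew-symmetry argument; the diffusion term equals $\nu\|\bfD(\vv_n)\|_p^p$, which by Korn's and Poincaré's inequalities dominates $c(p,\Omega)\nu\|\nabla\vv_n\|_p^p$; and the forcing is absorbed by Hölder and Young as $|(\f,\vv_n)|\leq\tfrac{\nu c(p,\Omega)}{2}\|\nabla\vv_n\|_p^p+C\|\f\|_{p'}^{p'}$.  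Integration over $[0,t]$ followed by the supremum then yields \eqref{ae1:un:p}, using only $p>1$ and the uniform bound on $\|\vv_{0,n}\|_2+\|\nabla\vv_{0,n}\|_2$ given by \eqref{uniform:cont:p}.

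For Part (2), I test \eqref{5.2:p} instead with $\tfrac{dc_k^n}{dt}(t)$ and sum, arriving at the pointwise identity
\begin{equation*}
\|\partial_t\vv_n\|_2^2+\kappa\|\nabla\partial_t\vv_n\|_2^2+\frac{\nu}{p}\frac{d}{dt}\|\bfD(\vv_n)\|_p^p \;=\; \bigl(\f,\partial_t\vv_n\bigr)+\bigl(\vv_n\otimes\vv_n,\nabla\partial_t\vv_n\bigr),
\end{equation*}
using $\partial_t(|\bfD(\vv_n)|^p)=p|\bfD(\vv_n)|^{p-2}\bfD(\vv_n):\partial_t\bfD(\vv_n)$.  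The forcing term is absorbed by Young into $\tfrac14\|\partial_t\vv_n\|_2^2+C\|\f\|_{p'}^{p'}$ (using $\L^{p'}\hookrightarrow \L^2$ when $p\geq 2$, or Sobolev--Poincaré against $\kappa\|\nabla\partial_t\vv_n\|_2^2$ when $p<2$, which is available in the stated ranges).  After time integration, together with $\|\vv_0\|_2^2+\|\nabla\vv_0\|_2^2\leq C(1+\|\nabla\vv_0\|_p^p)$ on the bounded domain $\Omega$, this delivers \eqref{ae2:un':p}, once the convective term is controlled.

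The main obstacle is precisely this convective term $\bigl(\vv_n\otimes\vv_n,\nabla\partial_t\vv_n\bigr)$, which lacks the skew-symmetry available in Part (1) and so must be absorbed into $\tfrac{\kappa}{2}\|\nabla\partial_t\vv_n\|_2^2$ plus a quantity controllable by Part (1).  After Hölder, this reduces to bounding $\|\vv_n\|_{r_1}\|\vv_n\|_{r_2}$ with $\tfrac{1}{r_1}+\tfrac{1}{r_2}=\tfrac12$.  In the range $2\leq d\leq 3$ with $p>1$, the embedding $\W^{1,2}_0(\Omega)\hookrightarrow \L^{q}(\Omega)$ (with $q=6$ if $d=3$ and any $q<\infty$ if $d=2$) gives both factors $\leq C\|\nabla\vv_n\|_2$, hence the term is bounded by $C\|\nabla\vv_n\|_2^2\|\nabla\partial_t\vv_n\|_2$; Young absorbs $\tfrac{\kappa}{2}\|\nabla\partial_t\vv_n\|_2^2$ and leaves $C\|\nabla\vv_n\|_2^4$, uniformly bounded via \eqref{ae1:un:p}.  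In the range $d>3$ with $p\geq d$, the embedding $\W^{1,p}_0(\Omega)\hookrightarrow \L^\infty(\Omega)$ yields $\|\vv_n\|_\infty\leq C\|\nabla\vv_n\|_p$, so the term is instead bounded by $C\|\nabla\vv_n\|_p\|\vv_n\|_2\|\nabla\partial_t\vv_n\|_2$, treated identically (with Gronwall applied to $t\mapsto \|\bfD(\vv_n)(t)\|_p^p$ to close the bound in $\sup_t\|\nabla\vv_n(t)\|_p^p$).  These are exactly the dimension/exponent ranges stated in Part (2), which explains why the hypothesis there is strictly stronger than in Part (1).
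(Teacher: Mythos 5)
Your strategy is the same as the paper's: test the Galerkin system with $c_k^n$ for (1) and with $\tfrac{\d c_k^n}{\d t}$ for (2), and your treatment of the convective term in (2) for $2\leq d\leq 3$ (Sobolev/Ladyzhenskaya control of $\|\vv_n\|_4^2$, absorption into $\kappa\|\nabla\partial_t\vv_n\|_2^2$, closure via the Part-(1) bound) is exactly what the paper does. Part (1) is fine. Two steps in Part (2), however, do not work as written.

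First, the forcing term: you invoke ``$\L^{p'}\hookrightarrow\L^2$ when $p\geq 2$,'' but for $p\geq 2$ one has $p'\leq 2$, so on a bounded domain the inclusion runs the other way ($\L^2(\1)\hookrightarrow\L^{p'}(\1)$), and $\f\in\L^{p'}$ does not give $\f\in\L^2$. The two cases are reversed: for $p\leq 2$ (so $p'\geq2$) you may estimate $(\f,\partial_t\vv_n)\leq C\|\f\|_{p'}\|\partial_t\vv_n\|_2$ and Young then yields $\|\f\|_{p'}^2\leq C(1+\|\f\|_{p'}^{p'})$; for $p>2$ you must instead pair $\|\f\|_{p'}\|\partial_t\vv_n\|_{p}$ and control $\|\partial_t\vv_n\|_{p}$ by $\|\nabla\partial_t\vv_n\|_2$ via Sobolev, which is the route consistent with the paper's list of tools.

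Second, the case $d>3$, $p\geq d$: you rely on $\W^{1,p}_0(\1)^d\hookrightarrow\L^\infty(\1)^d$, which holds only for $p>d$ and fails at the endpoint $p=d$ that the lemma explicitly includes. The paper avoids this by a different H\"older split, keeping the gradient on the first copy of $\vv_n$,
\begin{equation*}
\left|\int_\Omega(\vv_n\cdot\nabla)\vv_n\cdot\partial_t\vv_n\dx\right|\leq\|\vv_n\|_2\,\|\nabla\vv_n\|_p\,\|\partial_t\vv_n\|_{\frac{2d}{d-2}},
\end{equation*}
which is admissible precisely when $\frac12+\frac1p+\frac{d-2}{2d}\leq1$, i.e.\ $p\geq d$, and the last factor is absorbed into $\kappa\|\nabla\partial_t\vv_n\|_2^2$ by Sobolev. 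Replacing your $\L^\infty$ step by this split (or by $\W^{1,d}_0\hookrightarrow\L^q$ for large finite $q$) repairs the endpoint. With these two corrections your argument coincides with the paper's.
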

\begin{proof}
To establish \eqref{ae1:un:p} and \eqref{ae2:un':p}, we multiply equation \eqref{5.2:p} by $c_k^n(t)$ and $\displaystyle \frac{\d c_k^n(t)}{\d t}$, respectively, and sum the resulting expressions over $k = 1$ to $k = n$.
The desired estimates then follow from suitable applications of H\"older's, Korn's, Sobolev's, Young's, and Gronwall's inequalities.
The structure of the proof parallels the approach used in~\cite[Lemma~3.2]{Ant-Khom-2017}, though our momentum equation differs from the one considered there.
We just draw the reader's attention to statement~(2), where the distinction between the cases \eqref{cond:1:thm1} or \eqref{cond:2:thm1},
arises from the need to control the boundedness of the convective integral term
\begin{equation}\label{conv:term:nt}
\int_{\Omega} \big( \vv_n(t) \cdot \nabla \big) \vv_n(t) \cdot \partial_t \vv_n(t) \, \mathrm{d}x .
\end{equation}
When $d = 2$ or $d = 3$, the Ladyzhenskaya inequalities (cf.~\cite[Chapter~1, Lemmas~1--2]{Ladyzhenskaya1969}) can be applied as follows:
\begin{equation*}
\begin{split}
\left| \int_{\Omega} \big( \vv_n(t) \cdot \nabla \big) \partial_t \vv_n(t) \cdot \vv_n(t) \, \mathrm{d}x \right|
& \leq \|\nabla \partial_t \vv_n(t)\|_{2} \, \|\vv_n(t)\|_{4}^2 \\
& \leq
\begin{cases}
\sqrt{2}\|\nabla \partial_t \vv_n(t)\|_{2}\|\vv_n(t)\|_{2}\|\nabla \vv_n(t)\|_{2}, & d = 2, \\[4pt]
2\|\nabla \partial_t \vv_n(t)\|_{2}\|\vv_n(t)\|_{2}^{\frac{1}{2}}\|\nabla \vv_n(t)\|_{2}^{\frac{3}{2}}, & d = 3.
\end{cases}
\end{split}
\end{equation*}
For $d > 3$, we use Hölder's inequality:
\begin{equation*}
\left| \int_{\Omega} \big( \vv_n(t) \cdot \nabla \big) \vv_n(t) \cdot \partial_t \vv_n(t) \, \mathrm{d}x \right|
\leq \|\vv_n(t)\|_{\frac{2d}{d-2}} \, \|\nabla \vv_n(t)\|_{p} \, \|\partial_t \vv_n(t)\|_{\frac{2d}{d-2}},
\end{equation*}
which holds provided that
\begin{equation*}
\frac{2}{\frac{2d}{d-2}} + \frac{1}{p} \leq 1
\quad \Leftrightarrow \quad p \geq \frac{d}{2}.
\end{equation*}
\end{proof}

The uniform estimate \eqref{ae1:un:p} enables us to use the Continuation Principle to extend the solution $\bfc(t)$ of the initial-value problem \eqref{5.5:p} to the whole interval $[0,T]$.

On the other hand, as a consequence of \eqref{imb:p:2}-\eqref{def:rho} and \eqref{ae1:un:p}, we have
\begin{equation}\label{un:b:uxu}
\int_0^T\|\vv_n(t)\otimes\vv_n(t)\|_{\frac{\rho}{2}}^{\frac{\rho}{2}}\dt\leq C, \qquad \rho=\max\left\{\frac{2d}{d-2},\frac{dp}{d-p}\right\},
\end{equation}
for some positive constant $C$ not depending on $n$.
Note that
\begin{equation*}
\rho=\left\{
\begin{array}{ll}
\displaystyle \frac{2d}{d-2}, & 1<p\leq 2, \\[10pt]
\displaystyle \frac{dp}{d-p}, & p\geq 2,
\end{array}
\right.
\end{equation*}
and thus
\begin{equation*}
\frac{\rho}{2}>1\Leftrightarrow
\left\{
\begin{array}{ll}
\displaystyle \frac{2d}{d-2}>2, & 1<p\leq 2, \\[10pt]
\displaystyle p>\frac{2d}{d+2}, & p\geq 2.
\end{array}
\right.
\end{equation*}
Therefore, the uniform estimate \eqref{un:b:uxu}, together with the fact that $\frac{\rho}{2}>1$, which will be used later (see \eqref{lim:uxu} below), does not impose any restriction on $p$.

We now show that
\begin{equation}\label{ub:comp}
\mbox{$\partial_t \vv_n$ \ is uniformly bounded in \ $\L^\infty(0,T;\W^{-s,2}(\Omega)^d)$},
\end{equation}
for $s$ satisfying \eqref{Ws2->W1infty:p}.
In fact, estimates \eqref{ae1:un:p} and \eqref{ae2:un':p} provide uniform bounds on $\vv_n$ in $\L^\infty(0,T; \W^{1,2}_0(\Omega)^d)$ and in $\L^\infty(0,T; \W^{1,p}_0(\Omega)^d)$, independently of $n$.
The Galerkin system \eqref{5.2:p} allows us to write
\begin{equation*}
(\I - \kappa \Delta) \partial_t \vv_n = P^n \Big( - \divv (\vv_n \otimes \vv_n) + \nu\, \divv(|\mathbf{D}(\vv_n)|^{p-2} \mathbf{D}(\vv_n)) + \f \Big),
\end{equation*}
where $P^n$ is the orthogonal projection defined in \eqref{orth:proj:n}.
As $\kappa>0$, the elliptic operator $(\I - \kappa \Delta)$ is invertible, and, because $P^n$ acts on a finite-dimensional space, $(\I - \kappa \Delta)^{-1} P^n$ is bounded.
Therefore it is sufficient to estimate
\begin{equation}\label{Gal:n:dt:1}
\Big( - \divv (\vv_n \otimes \vv_n) + \nu\,\divv(|\mathbf{D}(\vv_n)|^{p-2} \mathbf{D}(\vv_n)) + \f \Big)
\end{equation}
in $\L^\infty(0,T;\W^{-s,2}(\Omega)^d)$ to control $\partial_t \vv_n$.
The convective term can be estimated, for any
$\boldsymbol{\phi} \in \W_0^{s,2}(\Omega)^d$, as
\begin{equation}\label{Gal:n:dt:2}
\begin{split}
\left|\big\langle - \divv(\vv_n(t) \otimes \vv_n(t)), \boldsymbol{\phi} \big\rangle\right|
= & \left|\int_\Omega \vv_n(t) \otimes \vv_n(t):\nabla\boldsymbol{\phi} \,\mathrm{d}x \right|
\leq \|\vv_n(t)\|_{2^\ast}^2\|\nabla\boldsymbol{\phi}\|_{\infty} \\
\leq  & C\|\nabla\vv_n(t)\|_{2}^2\|\boldsymbol{\phi}\|_{s,2},
\end{split}
\end{equation}
for  $s>1+\frac{d}{2}$.
For the viscous term, we also have for any
$\boldsymbol{\phi} \in \W_0^{s,2}(\Omega)^d$
\begin{equation}\label{Gal:n:dt:3}
\begin{split}
&\left|\big\langle\divv(|\mathbf{D}(\vv_n(t))|^{p-2} \mathbf{D}(\vv_n(t))), \boldsymbol{\phi} \big\rangle\right|
 \\&= \left|\int_\Omega |\mathbf{D}(\vv_n(t))|^{p-2} \mathbf{D}(\vv_n(t))):\nabla\boldsymbol{\phi} \,\mathrm{d}x \right| \\
&\leq  \|\mathbf{D}(\vv_n(t))\|_{p-1}^{p-1}\|\nabla\boldsymbol{\phi}\|_{\infty}
\leq  C\|\mathbf{D}(\vv_n(t))\|_{p}^{p-1}\|\boldsymbol{\phi}\|_{s,2},
\end{split}
\end{equation}
for  $s>1+\frac{d}{2}$ as well.
As for the forcing term, we invoke assumption \eqref{Hyp:f:1}, in particular the fact that $\f \in \L^{\infty}(0,T;\W^{-1,2}(\Omega)^d)$. Moreover, the continuous embedding $\W^{-1,2}(\Omega)^d \hookrightarrow \W^{-s,2}(\Omega)^d$, valid for every $s>1$, yields the same bound in the larger space. Consequently, we also have
\begin{equation}\label{Gal:n:dt:4}
\f \in \L^{\infty}(0,T;\W^{-s,2}(\Omega)^d).
\end{equation}
Combining \eqref{Gal:n:dt:1}--\eqref{Gal:n:dt:4} with the estimates \eqref{ae1:un:p} and \eqref{ae2:un':p}, we obtain \eqref{ub:comp}.

We can now apply the Aubin-Dubinski\u{\i}lemma with the compact embedding
$\W^{1,2}_0(\Omega)^d \hookrightarrow\hookrightarrow \L^2(\Omega)^d$
and the continuous embedding
$\L^2(\Omega)^d \hookrightarrow \W^{-s,2}(\Omega)^d$, valid for every for $s>0$.
Since, by \eqref{ae1:un:p}, $\vv_n$ is uniformly bounded in
$\L^\infty(0,T;\W^{1,2}_0(\Omega)^d)$ and, by \eqref{ub:comp},
$\partial_t \vv_n$ is uniformly bounded in
$\L^\infty(0,T;\W^{-s,2}(\Omega)^d)$, the Aubin-Dubinski\u{\i} lemma implies that,
up to a subsequence,
\begin{equation}\label{str:con:C0T}
\vv_n \xrightarrow[n\to\infty]{} \vv,\quad \text{ in }\ \C([0,T];\L^2(\Omega)^d).
\end{equation}

\subsection{Passing to the limit as $n\to\infty$}\label{WC}
		In view of \eqref{ae1:un:p} and \eqref{ae2:un':p}, the Banach--Alaoglu theorem guarantees the existence of functions $\vv$ and $\bmcA$ such that, for some subsequences (still denoted by the same indices), we have:
\begin{alignat}{2}
	\label{5.10:p}
	\vv_n &\xrightharpoonup[n\to\infty]{\ast} \vv, \quad && \text{in} \quad \L^\infty(0,T;\bfV), \\
	\label{5.10:1:p}
	\vv_n &\xrightharpoonup[n\to\infty]{\ast} \vv, \quad && \text{in} \quad \L^\infty(0,T;\bfV_p), \\
	\label{5.11:p}
	\vv_n &\xrightharpoonup[n\to\infty]{} \vv, \quad && \text{in} \quad \L^{p}(0,T;\bfV_p), \\
	\label{5.012:p}
	\bfA(\vv_n) &\xrightharpoonup[n\to\infty]{} \bmcA, \quad && \text{in} \quad \L^{p'}(0,T;\L^{p'}(\1)^{d \times d}), \\
	\label{5.14:p}
	\partial_t \vv_n &\xrightharpoonup[n\to\infty]{} \partial_t \vv, \quad && \text{in} \quad \L^2(0,T;\bfV).
\end{alignat}

{
On the other hand, \eqref{str:con:C0T} implies that, for some subsequence not relabeled,
\begin{equation}\label{str:conv:Ls}
	\vv_n \xrightarrow[n \to \infty]{} \vv,
	\quad \text{in} \quad \L^2(0,T;\L^2(\Omega)^d).
\end{equation}
}
By the Riesz--Fischer theorem, there exists a subsequence, still denoted by the same index, such that
\begin{equation*}
	\vv_n \xrightarrow[n \to \infty]{} \vv, \quad \text{a.e. in } Q_T,
\end{equation*}
and consequently,
\begin{equation}\label{5.42:1:a.e.:p}
	\vv_n \otimes \vv_n \xrightarrow[n \to \infty]{} \vv \otimes \vv, \quad \text{a.e. in } Q_T.
\end{equation}
Now, in view of \eqref{un:b:uxu} and \eqref{5.42:1:a.e.:p}, a classical result (see e.g.~\cite[Lemma~1.3]{Lions:1969}) yields
\begin{equation}\label{lim:uxu}
	\vv_n \otimes \vv_n \xrightharpoonup[n \to \infty]{} \vv \otimes \vv, \quad \text{in} \quad \L^{\frac{\rho}{2}}(0,T;\L^{\frac{\rho}{2}}(\1)^{d \times d}).
\end{equation}

\subsection{Application of the Minty trick}\label{MT}

Multiplying \eqref{5.2:p} by $c_k^n\in\C^1(0,T)$, with $k\in\{1,\dots,n\}$, and adding up the resulting equation from $k=1$ to $n$, and integrating over $(0,T)$, one has
\begin{equation}\label{5.2:n:p}%
\begin{split}
&
\!\int_{0}^{T}\!\!\!\!\int_{\Omega}\big({\partial_t\vv_{n}}\cdot\bpsi_j  + \kappa\nabla{\partial_t\vv_{n}}:\nabla\bpsi_j\big)\dxt +
\nu\!\int_{0}^{T}\!\!\!\!\int_{\Omega}|\bfD(\vv_n)|^{p-2}\bfD(\vv_n):\bfD(\bpsi_j)\dxt\\
& =
\!\int_{0}^{T}\!\!\!\!\int_{\Omega}\f\cdot\bpsi_j\dxt + \!\int_{0}^{T}\!\!\!\!\int_{\Omega}\vv_n\otimes\vv_n:\nabla\bpsi_j\dxt,
\end{split}
\end{equation}
for any function
\begin{equation*}
\bpsi_j(\x,t)=\sum\limits_{k=1}^j{c_k^j(t)}\bpsi_{k}(x),\qquad \bpsi_{k}\in \bfX^j,\quad k\in\{1,\dots,j\},\quad  j\in\{1,\dots,n\}.
\end{equation*}
In particular, taking $\bpsi_j=\vv_n$ in \eqref{5.2:n:p}, we have
\begin{equation}\label{5.2:un:p}
  \begin{split}
  & \|\vv_n(T)\|^2_{2}+\kappa\|\nabla\vv_n(T)\|_{2}^2+2\nu \!\int_{0}^{T}\!\!\!\!\int_{\Omega}|\bfD(\vv_n)|^p \dxt\\ &=
   \|\vv_{0,n}\|^2_{2}+\kappa\|\nabla\vv_{0,n}\|_{2}^2+2\!\int_{0}^{T}\!\!\!\!\int_{\Omega}\f\cdot\vv_n \dxt.
  \end{split}
\end{equation}

On the other hand, using the convergence results \eqref{5.10:p}-\eqref{5.14:p} and \eqref{lim:uxu},  we can pass to the limit as $n\to\infty$ in \eqref{5.2:n:p} so that
\begin{equation}\label{5.2:lim:p}%
\begin{split}
&
\!\int_{0}^{T}\!\!\!\!\int_{\Omega}\big({\partial_t\vv}\cdot\bpsi_j  + \kappa\nabla{\partial_t\vv}:\nabla\bpsi_j\big)\dxt +
\nu\!\int_{0}^{T}\!\!\!\!\int_{\Omega}\bmcA:\nabla\bpsi_j\dxt \\
&=
\!\int_{0}^{T}\!\!\!\!\int_{\Omega}\f\cdot\bpsi_j\dxt + \!\int_{0}^{T}\!\!\!\!\int_{\Omega}\vv\otimes\vv:\nabla\bpsi_j\dxt,
\end{split}
\end{equation}
for any such function $\bpsi_j$.
Since the functions $\left\{\bpsi_j\right\}_{j\in\N}$ are dense in $\L^\infty(0,T;\bfV)\cap\L^p(0,T;\bfV_p)$, we can put $\bpsi_j=\vv$ in \eqref{5.2:lim:p}, which yields
\begin{equation}\label{5.2:lim:u:p}
  \begin{split}
  & \|\vv(T)\|^2_{2}+\kappa\|\nabla\vv(T)\|_{2}^2+2\nu \!\int_{0}^{T}\!\!\!\!\int_{\Omega} \bmcA:\nabla\vv \dxt
  \\
  & =
  \|\vv_0\|^2_{2}+\kappa\|\nabla\vv_0\|_{2}^2+2\!\int_{0}^{T}\!\!\!\!\int_{\Omega}\f\cdot\vv \dxt.
  \end{split}
\end{equation}

Now, we subtract \eqref{5.2:lim:u:p} from \eqref{5.2:un:p}, and thus we have
\begin{equation}\label{5.2:un:u:p}
\begin{split}
&   \!\int_{0}^{T}\!\!\!\!\int_{\Omega}|\bfD(\vv_n)|^{p}\dxt-\!\int_{0}^{T}\!\!\!\!\int_{\Omega}\bmcA:\bfD(\vv) \dxt =: \frac{1}{2\nu}\big(I_1+I_2+I_3\big),
\end{split}
\end{equation}
where
\begin{alignat*}{2}
& I_1:=  \|\vv(T)\|^2_{2}+\kappa\|\nabla\vv(T)\|_{2}^2-\big(\|\vv_n(T)\|^2_{2}+\kappa\|\nabla\vv_n(T)\|_{2}^2\big), && \\
& I_2:=  \|\vv_{0,n}\|^2_{2}+\kappa\|\nabla\vv_{0,n}\|_{2}^2- \big(\|\vv_0\|^2_{2}+\kappa\|\nabla\vv_0\|_{2}^2\big), && \\
& I_3:= 2\!\int_{0}^{T}\!\!\!\!\int_{\Omega}\f\cdot\big(\vv_n-\vv\big) \dxt.
\end{alignat*}
By the monotonicity of the operator $\bfA(\vv)=|\bfD(\vv)|^{p-2}\bfD(\vv)$ (see \eqref{2.4}), there holds
\begin{equation}\label{mon:A(xi):p}
\begin{split}
 \!\int_{0}^{T}\!\!\!\!\int_{\Omega}|\bfD(\vv_n)|^{p}\dxt& \geq
\!\int_{0}^{T}\!\!\!\!\int_{\Omega}|\bfD(\bpsi)|^{p-2}\bfD(\bpsi):\bfD(\vv_n-\bpsi) \dxt \\&\quad + \!\int_{0}^{T}\!\!\!\!\int_{\Omega}|\bfD(\vv_n)|^{p-2}\bfD(\vv_n):\bfD(\bpsi)\dxt,
\end{split}
\end{equation}
for any function $\bpsi\in\overline{\mathrm{span}\{\bpsi_1,\dots,\bpsi_{n}\}}$.
Then, combining \eqref{5.2:un:u:p} with \eqref{mon:A(xi):p}, we have
\begin{equation}\label{5.2:un:1:p}
\begin{split}
& \!\int_{0}^{T}\!\!\!\!\int_{\Omega}|\bfD(\bpsi)|^{p-2}\bfD(\bpsi):\bfD(\vv_n-\bpsi) \dxt +  \!\int_{0}^{T}\!\!\!\!\int_{\Omega}|\bfD(\vv_n)|^{p-2}\bfD(\vv_n):\bfD(\bpsi)\dxt  \\&\qquad
- \!\int_{0}^{T}\!\!\!\!\int_{\Omega}\bmcA:\bfD(\vv) \dxt\\& \leq \frac{1}{2\nu}\left(I_1+I_2+I_3\right).
\end{split}
\end{equation}
Using the convergence results \eqref{5.11:p} and \eqref{5.012:p}, we can pass \eqref{5.2:un:1:p} to the limit, as $n\to\infty$, so that
\begin{equation}\label{5.2:un:2:p}
\begin{split}
& \!\int_{0}^{T}\!\!\!\!\int_{\Omega}\big(|\bfD(\bpsi)|^{p-2}\bfD(\bpsi)-\bmcA\big):\bfD(\vv-\bpsi)
\leq C\lim_{n\to\infty}\left(I_1+I_2+I_3\right),
\end{split}
\end{equation}
for some positive constant $C=C(\nu)$.
By \eqref{5.10:p}, the properties of $\limsup$ and $\liminf$, and the lower semicontinuity of the norm, one has
\begin{equation*}
\lim\limits_{n\to\infty}I_1\leq\limsup\limits_{n\to\infty}I_1\leq
{\|\vv(T)\|^2_{2}+\kappa\|\nabla\vv(T)\|_{2}^2}-\liminf\limits_{n\to\infty}\big(\|\vv_n(T)\|^2_{2}+\kappa\|\nabla\vv_n(T)\|_{2}^2\big)\leq 0.
\end{equation*}
By construction (see~\eqref{5.3:p}-\eqref{uniform:cont:p}),
\begin{equation*}
\lim\limits_{n\to\infty}I_2= 0.
\end{equation*}
Due to assumption \eqref{Hyp:f} and \eqref{5.11:p},
\begin{equation*}
\lim\limits_{n\to\infty}I_3= 0.
\end{equation*}
The above convergences allow us to write \eqref{5.2:un:2:p} as follows,
\begin{equation}\label{5.2:un:3:p}
\begin{split}
& \!\int_{0}^{T}\!\!\!\!\int_{\Omega}\big(|\bfD(\bpsi)|^{p-2}\bfD(\bpsi)-\bmcA\big):\bfD(\vv-\bpsi) \dxt
\leq 0.
\end{split}
\end{equation}
Taking $\bpsi=\vv\pm\delta\uu$, with $\delta>0$ and $\uu\in\L^\infty(0,T;\bfV)\cap\L^p(0,T;\bfV_p)$, in \eqref{5.2:un:3:p}, we obtain
\begin{equation*}
\begin{split}
& \mp\delta\!\int_{0}^{T}\!\!\!\!\int_{\Omega}\big(|\bfD(\vv\pm\delta\uu)|^{p-2}\bfD(\vv\pm\delta\uu)-\bmcA\big):\bfD(\uu) \dxt
\leq 0.
\end{split}
\end{equation*}
Letting $\delta\longrightarrow 0$, we see that it must be
\begin{equation*}
\mp\!\int_{0}^{T}\!\!\!\!\int_{\Omega}\big(|\bfD(\vv)|^{p-2}\bfD(\vv)-\bmcA\big):\bfD(\uu) \dxt= 0,
\end{equation*}
 for all $\uu\in\L^\infty(0,T;\bfV)\cap\L^p(0,T;\bfV_p)$.
Hence
\begin{equation}\label{A:lim}
\bmcA=|\bfD(\vv)|^{p-2}\bfD(\vv),\quad \mbox{a.e. in}\ \ Q_T.
\end{equation}

\subsection{Conclusion}
We can see that (1)-(2) of Definition~\ref{def.1} are easily verified due to \eqref{5.10:p}, \eqref{5.11:p} and \eqref{5.14:p}.
On the other hand, multiplying \eqref{5.2:p} by $\psi\in\C^\infty(0,T)$, with $\psi(T)=0$, integrating over $(0,T)$, using integration by parts in time, using \eqref{5.3:p}, and the convergence results \eqref{uniform:cont:p}, \eqref{5.10:p},  \eqref{5.012:p}, \eqref{lim:uxu} (observing here \eqref{map:L1} for the choice of $\rho$ in \eqref{imb:p:2}) and \eqref{A:lim}, we obtain \eqref{3p3} for any {$\bfi=\psi\bpsi_{k}$}, with $\bpsi_{k}\in \bfX^n$, $\psi\in\C^\infty(0,T)$, and $\psi(T)=0$. Thus, by continuity and density, \eqref{3p3} holds true for any $\bfi\in \bfC^\infty_{0,\div}(Q_T)$.
Finally, since $\partial_t\vv \in \L^2(0,T;\bfV)$ and $\vv\in \L^2(0,T;\bfV)$, we have
$\vv \in \W^{1,2}(0,T;\bfV) \subset \C([0,T];\bfV)$.
Consequently, the initial condition $\vv(0)=\vv_0$, stated in (3) of Definition~\ref{def.1}, is meaningful.
This concludes the proof of Theorem~\ref{thm:exist:1}.

\medskip
{
In the rest of this work, we focus on the most challenging case (see Theorem~\ref{thm:exist:2} and \eqref{3.7} therein):
\begin{equation*}
\frac{2d}{d+2}<p<\frac{d}{2}\qquad \mbox{and}\qquad d>3.
\end{equation*}
This intermediate range of the power-law exponent $p$ is significantly more delicate in spatial dimensions $d>3$, because the embedding in~\eqref{imb:p:2} fails to hold in this setting.
In lower dimensions, this embedding is valid, and the associated estimates can be carried out straightforwardly.
However, for $d>3$, additional analytical tools are required to handle the nonlinear terms. In particular, it becomes necessary to perform a refined decomposition of the pressure into multiple components in order to compensate for the loss of compactness and to secure the weak convergence properties essential to the existence argument.
The next section is devoted to developing the technical aspects of this pressure decomposition, which will play a crucial role in the construction of weak solutions for the case of $p$ satisfying \eqref{3.7}.
}

\section{Revisiting the pressure decomposition technique}\label{PD}
In this section, we look at the pressure term that appears in our model and will decompose it so that each part matches a specific term in the momentum equation.
The time derivative (inertia) and Voigt terms are grouped into one part of the pressure, while the viscous and convective terms are linked to two other pressure parts that are similar but have different roles.
The key idea behind the following result was originally introduced in~\cite{Wolf:2007} and later used in~\cite{DRW}, building on the original insight.

\begin{theorem}\label{thrm41}
{
Consider $\vv \in \C_{\rw}([0,T]; \bfV)$, and, for $m_1,\, m_2\in(1,\infty)$, let
\begin{eqnarray*}
  && \bcG_1 \in \L^{m_1}(0,T;\L^{m_1}(\Omega)^{d\times d}), \\
  && \bcG_2 \in \L^{m_2}(0,T;\L^{m_2}(\Omega)^{d\times d}),\quad \mbox{with}\ \ \divv\bcG_2 \in \L^{m_2}(0,T;\L^{m_2}(\Omega)^{d}).
\end{eqnarray*}

Assume further that condition \eqref{Hyp:u0} holds.
In addition, suppose the following identity is satisfied,
\begin{equation}\label{PD1}
\begin{split}
   & -\!\int_0^T \!\!\!\int_\Omega \vv \cdot \partial_t \bfi \, \dxt
  - \kappa \!\int_0^T \!\!\!\int_\Omega \nabla \vv : \nabla \partial_t \bfi \, \dxt +
   \!\int_0^T \!\!\!\int_\Omega (\bcG_1 + \bcG_2) : \nabla \bfi \, \dxt=0,
\end{split}
\end{equation}
for all $\bfi \in \bfC^\infty_{0}(Q_T)$ with $\div\bfi=0$.

Then, there exist $\pi_1 \in \L^{m_1}(0,T; \L^{m_1}(\Omega))$ and $\pi_2 \in \L^{m_2}(0,T; \W^{1,m_2}(\Omega))$,  and there exists $\pi_h \in \C_{\rw}([0,T]; \W^{1,2}(\Omega))$ such that
\begin{equation}\label{decomp:press}
\begin{split}
& -\!\int_0^T \!\!\!\int_\Omega \vv \cdot \partial_t \bfi \, \dxt
  - \kappa \!\int_0^T \!\!\!\int_\Omega \nabla \vv : \nabla \partial_t \bfi \, \dxt + \!\int_0^T \!\!\!\int_\Omega (\bcG_1 + \bcG_2) : \nabla \bfi \, \dxt\\
  & \quad =
  \!\int_0^T \!\!\!\int_\Omega (\pi_1 + \pi_2) \, \divv \bfi \, \dxt + \!\int_0^T \!\!\!\int_\Omega \nabla \pi_h \cdot \partial_t \bfi \, \dxt \\
&
  \qquad {+ } \!\int_0^T \!\!\!\int_\Omega \big( \vv(0) \cdot \bfi(0)
  + \kappa \nabla \vv(0) : \nabla \bfi(0) \big) \, \dxt,
\end{split}
\end{equation}
for all $\bfi \in \bfC^\infty(Q_T)$ with $\supp(\bfi)\Subset\1\times[0,T)$.

In addition, we have
\begin{align}\label{press:3:1}
-\Delta \pi_h&=0,\quad \pi_h(0)=0, \\
 \|\pi_h(t)\|_{1,2} &\leq C_h\big(\|\vv(t)-\vv(0)\|_2+\kappa\|\nabla\vv(t)-\nabla\vv(0)\|_2\big), \label{press:3:2} \\
 \int_{0}^{T}\|\pi_1(t)\|_{m_1}^{m_1}\d t &\leq C_1\int_{0}^{T}\|\bcG_1(t)\|_{m_1}^{m_1}\d t, \label{press:3:3:1} \\
 \int_{0}^{T}\|\pi_2(t)\|_{m_2}^{m_2}\dt &\leq C_2\int_{0}^{T}\|\bcG_2(t)\|_{m_2}^{m_2}\d t, \label{press:3:3:2} \\
  \int_{0}^{T}\|\nabla\pi_2(t)\|_{m_2}^{m_2}\d t &\leq C_3\int_{0}^{T}\left(\|\bcG_2(t)\|_{m_2}^{m_2}+\|\divv\bcG_2(t)\|_{m_2}^{m_2}\right)\d t, \label{press:3:4}
\end{align}
for some positive constants $C_h=C(d,\Omega)$ and $C_i=C(m_i,d,\Omega)$ for $i\in\{1,2,3\}$.

}

\end{theorem}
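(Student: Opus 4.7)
The plan is to adapt the pressure decomposition of Wolf~\cite{Wolf:2007}, later refined in~\cite{DRW}, to the Voigt-regularised setting. The new feature relative to those works is that the inertia and elastic contributions $\vv\cdot\partial_t\bfi$ and $\kappa\nabla\vv:\nabla\partial_t\bfi$ are absorbed into a single harmonic potential $\pi_h$ rather than into a time-integrated pressure. First I would time-integrate \eqref{PD1} against divergence-free test functions: testing with $\bfi(x,s)=\varphi(s)\bpsi(x)$, where $\varphi\in\C_0^\infty(0,T)$ and $\bpsi\in\bVcal$, and letting $\varphi$ approach the indicator of $(0,t)$ yields, for a.e.\ $t\in(0,T)$ and all $\bpsi\in\bVcal$, the identity
\[
\int_\Omega(\vv(t)-\vv(0))\cdot\bpsi\,\dx+\kappa\int_\Omega\nabla(\vv(t)-\vv(0)):\nabla\bpsi\,\dx+\int_\Omega\bigl(\bfH_1(t)+\bfH_2(t)\bigr):\nabla\bpsi\,\dx=0,
\]
where $\bfH_i(t):=\int_0^t\bcG_i(s)\,\d s$. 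Applying Lemma~\ref{lemm:BP} slice-wise with $\gamma=\min(m_1,m_2,2)$ produces a zero-mean $P(t)\in\L^\gamma(\Omega)$ such that the left-hand side above equals $\int_\Omega P(t)\divv\uu\,\dx$ for all $\uu\in\W_0^{1,\gamma'}(\Omega)^d$.

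Next I would construct $\pi_1$ and $\pi_2$ directly from $\bcG_1$ and $\bcG_2$ at each fixed time. For $\pi_1$, the natural choice is a slice-wise duality definition via the Bogovski\u{\i} operator $\mathrm{Bog}:\L^{m_1'}_0(\Omega)\to\W^{1,m_1'}_0(\Omega)^d$:
\[
\int_\Omega\pi_1(t)f\,\dx:=\int_\Omega\bcG_1(t):\nabla\mathrm{Bog}(f)\,\dx\qquad\text{for all }f\in\L^{m_1'}_0(\Omega),
\]
which gives $\pi_1(t)\in\L^{m_1}(\Omega)$ together with \eqref{press:3:3:1} by boundedness of $\mathrm{Bog}$. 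For $\pi_2$, I would exploit the stronger hypothesis $\divv\bcG_2\in\L^{m_2}$ and solve the elliptic problem $\Delta\pi_2(t)=\divv\divv\bcG_2(t)$ in $\Omega$ with the Neumann-type boundary condition $(\nabla\pi_2-\divv\bcG_2)\cdot\n=0$ on $\partial\Omega$; Calder\'on--Zygmund theory on the $\C^2$-domain then yields both \eqref{press:3:3:2} and the gradient bound \eqref{press:3:4}. The harmonic component $\pi_h(t)$ would be defined as the remainder obtained after subtracting $\int_0^t(\pi_1(s)+\pi_2(s))\,\d s$ from $P(t)$; the construction forces $-\Delta\pi_h=0$ in $\Omega$ and a boundary trace controlled by $\vv(t)-\vv(0)$ in $\bfV$, yielding \eqref{press:3:1}--\eqref{press:3:2}. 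Substituting the decomposition back into \eqref{PD1} and integrating by parts in time then produces \eqref{decomp:press}, with the endpoint terms at $s=0$ providing the initial-data contribution.

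I expect the main obstacle to be obtaining the gradient bound \eqref{press:3:4} for $\pi_2$: the Bogovski\u{\i}-based definition only gives an $\L^{m_2}$-bound, so one must pass through a genuine second-order elliptic formulation and carefully justify the Neumann trace of $\divv\bcG_2$ using only $\L^{m_2}$-regularity, which is where the $\C^2$-smoothness of $\partial\Omega$ enters decisively. A secondary delicate point is to upgrade the slice-wise construction of $\pi_h$ to the weak time-continuity $\pi_h\in\C_{\rw}([0,T];\W^{1,2}(\Omega))$: this would require transferring the weak continuity $\vv\in\C_{\rw}([0,T];\bfV)$ through the harmonic extension and verifying consistency of the decomposition across time slices.
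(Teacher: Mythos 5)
Your overall architecture coincides with the paper's (which follows Wolf~\cite{Wolf:2007} and~\cite{DRW}): integrate \eqref{PD1} in time against solenoidal test functions, apply the de~Rham/Bogovski\u{\i} lemma slice-wise to obtain a global potential $P(t)$ with $\nabla P(t)=(\I-\kappa\Delta)(\vv(t)-\vv(0))-\divv(\widetilde{\bcG}_1(t)+\widetilde{\bcG}_2(t))$, where $\widetilde{\bcG}_i(t)=\int_0^t\bcG_i(s)\,\d s$, and then split $P$ into pieces attached to $\bcG_1$, $\bcG_2$ and a harmonic remainder. Your treatment of $\pi_2$ via a weakly formulated Neumann problem is a legitimate alternative to the paper's Stokes-problem construction (the paper instead cites Wolf's Lemma~2.3 for \eqref{press:3:4}), and the weak formulation you mention does circumvent the trace issue for $\divv\bcG_2$.

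The genuine gap is the construction of $\pi_1$. The slice-wise duality definition $\int_\Omega\pi_1(t)f\,\dx:=\int_\Omega\bcG_1(t):\nabla\,\mathrm{Bog}(f)\,\dx$ gives the $\L^{m_1}$ bound \eqref{press:3:3:1}, but it does \emph{not} force the remainder $\pi_h:=P-\widetilde{\pi}_1-\widetilde{\pi}_2$ to be harmonic. Taking the divergence of the de~Rham identity kills the inertia--Voigt term (since $\vv(t)-\vv(0)$ is solenoidal) and yields $\Delta P=\divv\divv(\widetilde{\bcG}_1+\widetilde{\bcG}_2)$ in $\mathcal{D}'(\Omega)$, so harmonicity of $\pi_h$ requires $\Delta\widetilde{\pi}_1=\divv\divv\widetilde{\bcG}_1$. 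With your definition, for $\phi\in\C_0^\infty(\Omega)$ one has
\begin{equation*}
\langle\Delta\pi_1,\phi\rangle=\int_\Omega\bcG_1:\nabla\,\mathrm{Bog}(\Delta\phi)\,\dx,
\qquad
\langle\divv\divv\bcG_1,\phi\rangle=\int_\Omega\bcG_1:\nabla\nabla\phi\,\dx,
\end{equation*}
and the difference equals $\int_\Omega\bcG_1:\nabla\w\,\dx$ with $\w=\mathrm{Bog}(\Delta\phi)-\nabla\phi$ a nontrivial divergence-free field in $\W_0^{1,m_1'}(\Omega)^d$; since $\bcG_1$ is an arbitrary $\L^{m_1}$ tensor, this does not vanish. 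Consequently your $\pi_h$ fails \eqref{press:3:1}, loses the interior regularity $\W^{2,r}_{\loc}$ used later in Section~\ref{Sect:ex:small}, and the bound \eqref{press:3:2} in terms of $\vv(t)-\vv(0)$ alone cannot hold, because the remainder inherits an uncontrolled contribution from $\bcG_1$. The paper's construction avoids this by taking $\widetilde{\pi}_1(t)$ to be the pressure of the Stokes problem $-\Delta\uu_1+\nabla\widetilde{\pi}_1=\divv\widetilde{\bcG}_1$, $\divv\uu_1=0$, $\uu_1|_{\partial\Omega}=0$: the solenoidal velocity $\uu_1$ absorbs exactly the divergence-free component of $\divv\widetilde{\bcG}_1$ that the duality definition discards, so that $\Delta\widetilde{\pi}_1=\divv\divv\widetilde{\bcG}_1$ and the three velocity parts sum to a solution of the homogeneous Stokes problem, which is what identifies $P=\widetilde{\pi}_1+\widetilde{\pi}_2+\pi_h$ with $\pi_h$ harmonic and controlled by $\vv(t)-\vv(0)$.
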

It should be noted that if we combine all the components of the decomposed pressure, we obtain
\begin{align*}
    \pi(t) = \pi_h(t) + \int_0^t \pi_{1}(s)\, \mathrm{d}s+\int_0^t \pi_{2}(s)\, \mathrm{d}s,
\end{align*}
which implies that $\pi \in \L^\infty(0,T; \L^\gamma(\Omega))$.

\begin{proof}[Proof of Theorem \ref{thrm41}]
{
	The proof has been divided into {several steps}.
	
	\vspace{2mm}
	\noindent\textbf{Step 1:}
Let us define a bilinear form $\mathcal{Z}$ on the domain $\mathcal{D}(\mathcal{Z}) := \W_0^{1,2}(\Omega)^d \subset \L^2(\Omega)^d$ by
\begin{align*}
    \mathcal{Z}(\vv,\uu) := \int_{\Omega} \vv \cdot \uu \, \mathrm{d}\x + \kappa \int_\Omega \nabla \vv : \nabla \uu \, \mathrm{d}\x,
\end{align*}
where $\kappa > 0$ is the relaxation time constant in the Navier-Stokes-Voigt model.

We begin by observing that $\mathcal{Z}$ is well-defined and bilinear on $\W_0^{1,2}(\Omega)^d$, since both the $\L^2$-inner product and the (Frobenius) inner product of gradients are well-defined for functions in $\W_0^{1,2}(\Omega)^d$.

Next, we consider the norm induced by this bilinear form, given by
\begin{align*}
    \mathcal{Z}(\vv,\vv)^{\frac{1}{2}} = \left( \|\vv\|_{2}^2 + \kappa \|\nabla \vv\|_{2}^2 \right)^{\frac{1}{2}}.
\end{align*}
This defines a norm on $\W_0^{1,2}(\Omega)^d$ equivalent to the standard $\W^{1,2}$-norm, due to the Poincaré inequality (which applies because of the zero boundary conditions). Hence, $\W_0^{1,2}(\Omega)^d$ is complete with respect to the $\mathcal{Z}$-norm. This completeness implies that the bilinear form $\mathcal{Z}$ is closed. Moreover, it follows immediately from its definition that $\mathcal{Z}$ is a symmetric and positive form.

Given that $\mathcal{Z}$ is a densely defined, closed, symmetric, and positive bilinear form on the Hilbert space $\L^2(\Omega)^d$, we can apply \cite[Lemma II.3.2.1]{HS} to conclude the existence of a unique positive self-adjoint operator $\mathrm{A}: \D(\mathrm{A}) \to \L^2(\Omega)^d$, with dense domain $\D(\mathrm{A}) \subset \W_0^{1,2}(\Omega)^d$, such that
\begin{align*}
    \int_\Omega \mathrm{A}(\vv) \cdot \uu \, \mathrm{d}\x = \mathcal{Z}(\vv,\uu)
    = \int_{\Omega} \vv \cdot \uu \, \mathrm{d}\x + \kappa \int_\Omega \nabla \vv : \nabla \uu \, \mathrm{d}\x,
\end{align*}for all $\vv \in \D(\mathrm{A}),\ \uu \in \W_0^{1,2}(\Omega)^d$.

Fixing an arbitrary test function $\uu \in \C_0^\infty(\Omega)^d$, we observe that the relation
\begin{equation*}
\mathrm{A}(\vv) = (\I - \kappa \Delta)(\vv) = (\I - \kappa \divv \nabla)(\vv)
\end{equation*}
holds in the sense of distributions. Based on this (formal) identity, we define the operator $\mathrm{A} := \I - \kappa \Delta$. Thus, the operator $\mathrm{A} = \I - \kappa \Delta : \D(\I - \kappa \Delta) \to \L^2(\Omega)^d$ is characterized by the relation
\begin{align} \label{PD3}
    \int_\Omega (\I - \kappa \Delta)(\vv) \cdot \uu \, \mathrm{d}\x
    = \mathcal{Z}(\vv, \uu)
    = \int_\Omega \vv \cdot \uu \, \mathrm{d}\x + \kappa \int_\Omega \nabla \vv : \nabla \uu \, \mathrm{d}\x,
\end{align}
for all $\vv \in \D(\I - \kappa \Delta)$ and $\uu \in \W_0^{1,2}(\Omega)^d$.
The domain of $\mathrm{A}$ is given by the set
\begin{align*}
    \D(\I - \kappa \Delta)
    = \left\{ \vv \in \W_0^{1,2}(\Omega)^d : \uu \mapsto \mathcal{Z}(\vv, \uu) \text{ is continuous with respect to the } \L^2 \text{-norm} \right\}.
\end{align*}
That is, $\vv \in \D(\I - \kappa \Delta)$ if and only if the linear functional $\mathcal{Z}(\vv, \cdot)$ extends continuously to all of $ \L^2(\Omega)^d$, meaning there exists a function $\mathrm{A}(\vv) \in \L^2(\Omega)^d$ such that the identity~\eqref{PD3} holds for all $\uu \in \W_0^{1,2}(\Omega)^d$.

We can now reformulate equation~\eqref{PD1} into a weak form that is well-suited for the pressure decomposition we aim to develop. Specifically, making use of identity \eqref{PD3}, we consider \eqref{PD1} in the following form:
\begin{equation} \label{PD4}
\begin{split}
   -\!\int_0^T \!\!\int_\Omega (\I - \kappa \Delta)\vv \cdot \partial_t \bfi \, \dxt
   + \int_0^T \!\!\int_\Omega (\bcG_1 + \bcG_2) : \nabla \bfi \, \dxt=0,
\end{split}
\end{equation}
for all $\bfi \in \bfC^\infty_0(Q_T)$ with $\divv\bfi=0$.
This formulation sets the stage for decomposing the pressure into distinct components associated with different terms in the momentum equation.

Using \eqref{PD4} in conjunction with Lemma \ref{lemm:BP}, and proceeding as in the proof of \cite[Theorem 2.6]{Wolf:2007}, we obtain the existence of a (unique) pressure $\pi\in \C_{\mathrm{w}}([0,T];\L^1(\1))$, with
\begin{align}\label{PD5}
	\int_\1 \pi(t)\d x=0,\quad \text{ for all }\ t\in[0,T],
\end{align}
and such that
\begin{align}\label{PD6}
&	\int_\1 (\I-\kappa\Delta) \big(\vv(t)-\vv(0)\big)\cdot \bpsi \d \x  + \int_\1\big(\widetilde{\bcG}_1(t)+\widetilde{\bcG}_2(t)\big):\nabla \bpsi \d \x =
\int_\1 \pi(t)\div\bpsi \d \x,
\end{align}
for all $\bpsi \in \W_{0}^{1,2}(\1)^d$ and for all $t\in[0,T]$, where
\begin{equation}\label{BcG:i}
\widetilde{\bcG}_i(t):=\int_{0}^{t}\bcG_i(s)\ds,\quad t\in(0,T),\quad i\in\{1,2\}.
\end{equation}

\vspace{2mm}
\noindent
\textbf{Step 2:}
Next, for $i \in \{1,2\}$ and $t \in [0,T]$, let $\uu_i(t) \in \bfV_{m_i}$ be the unique solution (cf.~\cite[Theorem~IV.6.1]{Galdi:2011}) to the following Stokes problem,
\begin{equation}
	\left\{\begin{aligned}
   -\Delta\uu_i(t)+\nabla\widetilde{\pi}_i(t) &= \divv\widetilde{\bcG}_i(t), \qquad &&\text{in } \Omega, \\
   \divv\uu_i(t) &= 0, \qquad &&\text{in } \Omega, \\
  \uu_i(t) &= 0, \qquad &&\text{on } \partial\Omega.
\end{aligned}
\right.
\end{equation}
In view of Lemma~\ref{lemm:BP}, for each $i \in \{1,2\}$,
\begin{equation}\label{prop:pi:i}
  \widetilde{\pi}_i(t) \in \L^{m_i}(\Omega), \text{ and }  \int_\1 \widetilde{\pi}_i(t)\, \mathrm{d}x = 0, \quad \text{ for all }\ t \in [0,T].
\end{equation}
Proceeding as in the proof of \cite[Theorem~2.2]{DRW}, we show that, in fact, for each $i \in \{1,2\}$,
\begin{equation*}
\widetilde{\pi}_i \in \L^{m_i}(0,T; \L^{m_i}(\Omega)),
\end{equation*}
and
\begin{equation*}
\int_{0}^{T} \|\partial_t \widetilde{\pi}_i(t)\|_{m_i}^{m_i} \, \mathrm{d}t \leq C_i \int_{0}^{T} \|\bcG_i(t)\|_{m_i}^{m_i} \, \mathrm{d}t.
\end{equation*}
for some positive constants $C_i$, $i\in\{1,2\}$.
Setting
\begin{equation}\label{der:pi}
  \pi_i := \partial_t \widetilde{\pi}_i, \quad i \in \{1,2\},
\end{equation}
we immediately obtain \eqref{press:3:3:1}-\eqref{press:3:3:2}.
The proof of \eqref{press:3:4} is considerably longer, so we refer the reader to the proof of \cite[Lemma~2.3]{Wolf:2007}, where it is presented in full detail.

Now, for $t \in [0,T]$, let $\uu_h(t) \in \bfV \cap \W^{2,2}(\Omega)^d$ be the unique solution (cf.~\cite[Theorem~IV.6.1]{Galdi:2011}) to the following Stokes problem,
\begin{equation}
\left\{\begin{aligned}
   -\Delta \uu_h(t) + \nabla \pi_h(t) &= -(\I - \kappa \Delta) \big(\vv(t) - \vv(0)\big), \qquad &&\text{in } \Omega, \\
  \divv \uu_h(t)& = 0, \qquad &&\text{in } \Omega, \\
   \uu_h(t) &= 0, \qquad &&\text{on } \partial\Omega.
\end{aligned}
\right.
\end{equation}
Moreover,
\begin{alignat}{2}
 \int_\Omega \pi_h(t)\, \mathrm{d}x &= 0, \label{prop:pi:h} \\ \nonumber
 \|\pi_h(t)\|_{1,2} &\leq C_h \|(\I - \kappa \Delta)(\vv(t) - \vv(0))\|_2\\& \leq
C_h \left( \|\vv(t) - \vv(0)\|_2 + \kappa \|\nabla \vv(t) - \nabla \vv(0)\|_2 \right). \label{prop:grad:pi:h}
\end{alignat}
The latter inequality implies \eqref{press:3:2}. In particular, \eqref{press:3:1} holds, and we have
\begin{equation} \label{press:3:2:a}
\sup_{t \in [0,T]} \|\pi_h(t)\|_{1,2} \leq C_h \bigg( \sup_{t \in [0,T]} \left\{ \|\vv(t)\|_2^2 + \kappa \|\nabla \vv(t)\|_2^2 \right\} + \|\vv_0\|_2^2 + \kappa \|\nabla \vv_0\|_2^2 \bigg),
\end{equation}
whenever \eqref{Hyp:u0} holds.

Then, taking
\begin{equation*}
  \vv(t) := \vv_1(t) + \vv_2(t) + \vv_h(t),
\end{equation*}
we obtain from \eqref{PD6},
\begin{align}\label{PD6:a}
-(\I - \kappa \Delta) \big(\vv(t) - \vv(0)\big) + \divv\big(\widetilde{\bcG}_1(t) - \widetilde{\bcG}_2(t)\big) = \nabla \pi(t).
\end{align}
Combining this with the three previous Stokes problems, we obtain another Stokes problem, now with zero right-hand side:
\begin{equation}
	\left\{\begin{aligned}
   -\Delta \vv(t) + \nabla\big(\widetilde{\pi}_1(t) + \widetilde{\pi}_2(t) + \pi_h(t) - \pi(t)\big) &= 0, \qquad &&\text{in } \Omega, \\
   \divv \vv(t) &= 0, \qquad &&\text{in } \Omega, \\
   \vv(t) &= 0, \qquad &&\text{on } \partial\Omega.
\end{aligned}
\right.
\end{equation}
It is well-known that for this Stokes problem with zero forcing and homogeneous Dirichlet boundary conditions, one has
\begin{equation*}
\vv(t) = 0, \text{ and } \pi(t) = \widetilde{\pi}_1(t) + \widetilde{\pi}_2(t) + \pi_h(t).
\end{equation*}

For $t \in (0,T)$, we insert $\bpsi(\cdot,t) = \partial_t \bfi(\cdot,t)$, with $\supp(\bfi) \Subset \1 \times [0,T)$, in \eqref{PD6}, and integrate the resulting equation over $(0,T)$ to obtain
\begin{equation}\label{PD6:b}
\begin{split}
& \int_0^T \int_\Omega (\I - \kappa \Delta) \big(\vv(t) - \vv(0)\big) \cdot \partial_t \bfi \, \mathrm{d}x \, \mathrm{d}t  + \int_0^T \int_\Omega \big(\widetilde{\bcG}_1(t) + \widetilde{\bcG}_2(t)\big) : \nabla \partial_t \bfi \, \mathrm{d}x \, \mathrm{d}t
\\
&=
 \int_0^T \int_\Omega \big(\widetilde{\pi}_1(t) + \widetilde{\pi}_2(t) + \pi_h(t)\big) \divv \partial_t \bfi \, \mathrm{d}x \, \mathrm{d}t.
\end{split}
\end{equation}
Finally, integrating this equation by parts in time and invoking \eqref{PD3}, \eqref{BcG:i}, \eqref{prop:pi:i}, \eqref{der:pi}, and
\eqref{prop:pi:h}, we obtain the decomposition identity \eqref{decomp:press}.
}
\end{proof}

\section{Proof of Theorem~\ref{thm:exist:2}}\label{Sect:ex:small}

{
We begin by regularizing problem \eqref{1.1}, a necessary step to ensure that Theorem~\ref{thm:exist:1} can be applied to the resulting problem presented in \eqref{1.1:AS1} below.
We choose a Lebesgue exponent $\beta$ such that
\begin{equation}\label{def:beta}
\beta \geq \frac{d}{2}\qquad \mbox{and}\qquad d>3.
\end{equation}
For technical convenience in the arguments that follow (see~\eqref{def:r0} below), we also impose an upper bound on $\beta$.
As a result, the admissible range of $\beta$ in \eqref{def:beta} is restricted to
\begin{equation}\label{def:beta:d}
\frac{d}{2} \leq \beta \leq d\qquad \mbox{and}\qquad d>3.
\end{equation}
}
{

\subsection{Regularized problem}

For $\beta$ in the conditions of \eqref{def:beta:d} and for each $n\in\N$, we consider the following regularized problem in the spirit of~\cite{ZhikovPastukhova2011},
\begin{equation}\label{1.1:AS1}
	\hspace{-0.475cm}\left\{	
	\begin{aligned}
		{\partial_t(\vv_n -\kappa\Delta \vv_n)+\divv(\vv_n\otimes \vv_n)- \nu\divv(\bfA(\vv_n))} - \frac{1}{n}\divv(\bfB(\vv_n))  & = \f -\nabla\pi_n,\quad \text{ in } Q_T ,  \\
		\divv\vv_n &=0,\quad \text{ in }Q_T,  \\
		\vv_n &={\vv_{0,n}},\quad \text{ in }\1\times\{0\},  \\
		\vv_n &=\boldsymbol{0},\quad \text{ on }\Gamma_T,
		\end{aligned}
\right.
\end{equation}
{depending on the forcing term $\f$, and on the initial velocity $\vv_0$ in the sense that
\begin{equation}\label{von->v0}
\vv_{0,n}\xrightarrow[n \to \infty]{} \vv_0, \quad \text{in }\ \bfV.
\end{equation}
}
The operator $\bfA$ is defined in \eqref{op:A(u)}, and the operator $\bfB$ is defined by
\begin{equation}\label{op:B(u)}
\bfB(\vv):=|\bfD(\vv)|^{\beta-2}\bfD(\vv),
\end{equation}
for $\beta$ given in \eqref{def:beta:d}.
Note that, under assumption~\eqref{def:beta}, $\beta \geq p$ and therefore
$\L^\beta(0,T;\bfV_\beta) \subset \L^p(0,T;\bfV_p)$, which in turn implies that any weak solution to the problem~\eqref{1.1:AS1} that is in $\L^\beta(0,T;\bfV_\beta)$ is also in $\L^p(0,T;\bfV_p)$.
}

\begin{definition}\label{def.1:ap}
Let the conditions of Definition~\ref{def.1} be satisfied and assume \eqref{def:beta} holds.
We say that $\vv_n$ is a weak solution to the problem \eqref{1.1:AS1}, if in addition to (2)-(3) of Definition~\ref{def.1}:
\begin{enumerate}
  \item[(1')] $\vv_n\in\L^\infty(0,T;\bfV)\cap \L^\beta(0,T;\bfV_\beta)$;
  \item[(4')] The following identity,
\begin{equation}\label{3p3:ap}
\begin{split}
&
-\!\int_{0}^{T}\!\!\!\!\int_{\Omega}\vv_n\cdot\partial_t\bfi \dxt - \kappa\!\int_{0}^{T}\!\!\!\!\int_{\Omega}\nabla\vv_n:\nabla\partial_t\bfi\dxt +
\nu\!\int_{0}^{T}\!\!\!\!\int_{\Omega}|\bfD(\vv_n)|^{p-2}\bfD(\vv_n):\bfD(\bfi)\dxt  \\
& \quad + \frac{1}{n}\!\int_{0}^{T}\!\!\!\!\int_{\Omega}|\bfD(\vv_n)|^{\beta-2}\bfD(\vv_n):\bfD(\bfi)\dxt \\&=
\int_\1\vv_0\cdot\bfi(0)\dx + \kappa\int_\1\nabla\vv_0:\nabla\bfi(0)\dx + \!\int_{0}^{T}\!\!\!\!\int_{\Omega}\f\cdot\bfi\dxt + \!\int_{0}^{T}\!\!\!\!\int_{\Omega} \vv_n\otimes\vv_n:\mathds{\nabla}\bfi\dxt,
\end{split}
\end{equation}
holds for all $\bfi\in \bfC^\infty_{\div}(Q_T)$ with $\supp (\bfi) \Subset \Omega\times [0,T)$.
\end{enumerate}
\end{definition}

{
Since the exponent $\beta$ satisfies \eqref{def:beta}, it also meets condition \eqref{cond:2:thm1}.
Therefore, the existence of a weak solution $\vv_n$ to the problem \eqref{1.1:AS1}, in the sense of Definition~\ref{def.1:ap},
can be directly inferred from Theorem~\ref{thm:exist:1}}.
In this case only an adaptation of assumption \eqref{Hyp:u0} is required,
\begin{equation}\label{Hyp:u0:beta}
\vv_0\in\bfV\cap\bfV_\beta.
\end{equation}
Note that assumption \eqref{Hyp:f} is still usable in the present case, because, as $\beta\geq p$, we have
$\L^{p'}(0,T;\L^{p'}(\1)^d)\subset\L^{\beta'}(0,T;\L^{\beta'}(\1)^d)$.

{ In view of \eqref{3p3:ap} and a density argument, equation \eqref{3p3:ap} may be tested with
	$\bpsi \in \L^\infty(0,T;\mathbf{V}) \break \cap \L^\beta(0,T;\mathbf{V}_\beta)$, for any $\beta \ge p$, provided that all resulting terms are well defined. This requirement is verified in \eqref{SE1}--\eqref{RE1}. Indeed, there exists a sequence of test functions $\{\bfi_m\}_{m\in\mathbb{N}} \subset \mathbf{C}^\infty(Q_T)$, with $\operatorname{supp}(\bfi_m) \Subset \Omega \times [0,T)$, such that
	$\bfi_m \to \bpsi, \  \text{ as } \ m \to \infty$, in $\L^\infty(0,T;\mathbf{V}) \cap \L^\beta(0,T;\mathbf{V}_\beta)$.
}

\subsection{A priori estimates}
{
As we have seen in the proof Theorem~\ref{thm:exist:1} (see Section~\ref{Sect:ex:thm:1}), and as $\beta$ satisfies \eqref{cond:2:thm1}, the space of test functions coincides with the space where the solutions to the problem \eqref{1.1:AS1} are constructed.
In particular, the counterparts of \eqref{ae1:un:p} and \eqref{ae2:un':p} are also valid here in the same conditions of Lemma~\ref{thrmUE:p}, }
\begin{equation}\label{ae1:un}
\begin{split}
&
\sup_{t\in[0,T]}\left\{\|\vv_n(t)\|^2_{2} + \kappa\|\nabla\vv_n(t)\|_{2}^2\right\} + \int_0^T\left(\nu\|\nabla\vv_n(t)\|^p_{p}+
\frac{1}{n}\|\nabla\vv_n(t)\|^\beta_{\beta}\right)\dt
 \\
&\leq C_1\left(\int_0^T\|\f(t)\|^{p'}_{p'}\dt + \|\vv_0\|^2_{2} + \|\nabla\vv_0\|_{2}^2 \right),
\end{split}
\end{equation}
and
\begin{equation} \label{ae2:un'}
\begin{split}
&
 \sup_{t\in[0,T]}\left\{\|\nabla\vv_n(t)\|^p_{p}+\frac{1}{n}\|\nabla\vv_n(t)\|^\beta_{\beta}\right\} +
\int_0^T\left(\|{\partial_t\vv_{n}}(t)\|_{2}^2+\kappa\|\nabla{\partial_t\vv_{n}}(t)\|^2_{2}\right)\dt
\\
& \leq C_2\left(\int_0^T\|\f(t)\|^{p'}_{p'}\dt + \|\nabla\vv_0\|^p_{p} + \frac{1}{n}\|\nabla\vv_0\|^\beta_{\beta} \right).
\end{split}
\end{equation}

Given their importance to the remainder of this work, the following two estimates are presented as separate claims.

\begin{claim}\label{claim:3}
For any $p\geq1$, there exists a positive constant $C$ (independent of $n$) such that,
\begin{equation}\label{est:E:un2}
	  \int_{0}^{T}\|\vv_n(t)\|^{\zeta}_{\zeta}\d t\leq C, \quad \text{ for }\quad  \zeta :=\left\{
	  \begin{aligned}
	  	& \frac{pd}{d-2}, \text{ for } d\neq  2,\\
	  	&[1,\infty), \text{ for } d=2.
	  \end{aligned}
	  \right.
\end{equation}
\end{claim}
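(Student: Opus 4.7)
The plan is to derive the bound directly from the uniform energy estimate \eqref{ae1:un} by invoking the parabolic interpolation embedding \eqref{imb:p:2}, which has already been singled out in the excerpt as the key tool linking the energy class to spacetime $\L^\zeta$ integrability. Indeed, \eqref{ae1:un} furnishes uniform-in-$n$ control of $\vv_n$ in the intersection
\begin{equation*}
\L^\infty(0,T;\W^{1,2}_0(\1)^d)\cap \L^p(0,T;\W^{1,p}_0(\1)^d),
\end{equation*}
and for $d>2$ the embedding \eqref{imb:p:2} with $\rho=\zeta=\tfrac{pd}{d-2}$ immediately yields $\int_0^T\|\vv_n(t)\|_\zeta^\zeta\dt\leq C$. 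For the two-dimensional case, I would instead appeal to the Sobolev embedding $\W^{1,2}_0(\1)\hookrightarrow \L^q(\1)$, valid for every $q\in[1,\infty)$, so that $\vv_n$ is uniformly bounded in $\L^\infty(0,T;\L^q(\1)^d)$ for each finite $q$; integrating over the bounded time interval $(0,T)$ then gives the conclusion.

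To keep the argument self-contained in the case $d>2$, I would reprove the embedding rather than cite it: Sobolev gives $\|\vv_n(t)\|_{2^\ast}\leq C\|\nabla\vv_n(t)\|_2$ with $2^\ast=\tfrac{2d}{d-2}$ and, when $p<d$, $\|\vv_n(t)\|_{p^\ast}\leq C\|\nabla\vv_n(t)\|_p$ with $p^\ast=\tfrac{pd}{d-p}$ (if $p\geq d$ one simply replaces $p^\ast$ by any sufficiently large finite exponent). Writing $|\vv_n|^\zeta=|\vv_n|^{\zeta-p}|\vv_n|^p$ and applying H\"older in space with exponents $\tfrac{2^\ast}{\zeta-p}$ and $\tfrac{p^\ast}{p}$ gives
\begin{equation*}
\|\vv_n(t)\|_\zeta^\zeta \leq \|\vv_n(t)\|_{2^\ast}^{\zeta-p}\,\|\vv_n(t)\|_{p^\ast}^{p},
\end{equation*}
where the exponents of $2^\ast$ and $p^\ast$ satisfy the compatibility condition $\tfrac{\zeta-p}{2^\ast}+\tfrac{p}{p^\ast}=1$; a short calculation shows this is equivalent precisely to $\zeta=\tfrac{pd}{d-2}$, which pins down the claimed exponent. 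Integrating in $t$, bounding the first factor by the uniform $\L^\infty(0,T;\L^{2^\ast})$ bound and the second by the uniform $\L^p(0,T;\L^{p^\ast})$ bound (both consequences of \eqref{ae1:un} via Sobolev), I obtain the desired estimate with a constant independent of $n$.

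I do not anticipate substantive obstacles: the estimate is purely a corollary of \eqref{ae1:un} combined with classical Sobolev embeddings and H\"older interpolation. The only minor bookkeeping concerns the borderline cases where the Sobolev exponent $p^\ast$ is not directly defined (namely $p=d$ or $p>d$); these are handled by replacing $p^\ast$ with any sufficiently large finite Lebesgue exponent, which is legitimate since for $p\geq d$ the embedding $\W^{1,p}_0(\1)\hookrightarrow \L^q(\1)$ holds for every finite $q$. This flexibility leaves the compatibility relation and hence the value $\zeta=\tfrac{pd}{d-2}$ intact.
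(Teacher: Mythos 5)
Your argument is essentially the paper's own: the paper likewise deduces the bound from \eqref{ae1:un} via Sobolev embedding and the spatial interpolation $\|\vv_n\|_{\zeta}^{\zeta}\le\|\vv_n\|_{2^\ast}^{\zeta-p}\|\vv_n\|_{p^\ast}^{p}$ — it merely splits into the cases $1\le p\le 2$ (where $\zeta\le 2^\ast$, so the $\L^\infty(0,T;\bfV)$ bound alone suffices) and $p>2$ (where it uses exactly your H\"older splitting, with $\zeta-p=\tfrac{2p}{d-2}$), and it treats $d=2$ as you do. One small correction: your parenthetical fix for $p\ge d$ with $d>2$ does not work as stated. With the splitting $|\vv_n|^{\zeta}=|\vv_n|^{\zeta-p}\,|\vv_n|^{p}$, the compatibility relation forces the second Lebesgue exponent to be exactly $p^\ast=\tfrac{pd}{d-p}$ (indeed $\tfrac{p}{q}=1-\tfrac{\zeta-p}{2^\ast}=\tfrac{d-p}{d}$), which is negative for $p>d$; equivalently $\zeta-p>2^\ast$ there, so no choice of a large finite $q$ rescues a two-norm interpolation that puts only $p$ powers on the $\W^{1,p}$ factor — one would instead have to invoke the $\L^\infty(0,T;\bfV_p)$ bound from \eqref{ae2:un'}. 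This is immaterial for the paper, since the claim is only applied in the regime $p<\tfrac{3d-4}{d}\le d$ (and the paper's own proof is equally silent on that borderline case), but the remark as you wrote it is not correct.
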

\begin{proof}[Proof of Claim~\ref{claim:3}]
Assume that $d\not=2$ (for $d=2$ is easier).

For $1\leq p\leq 2$, we can use Sobolev's and Hölder's inequalities, together with \eqref{ae1:un}, so that
\begin{equation*}
\begin{split}
\int_{0}^{T}\|\vv_n(t)\|^{\frac{pd}{d-2}}_{\frac{pd}{d-2}}\d t & \leq
C\int_{0}^{T}\|\nabla\vv_n(t)\|_2^{\frac{pd}{d-2}}\d t \\
&
\leq C(T)\bigg(\sup_{t\in[0,T]}\left\{\|\vv_n(t)\|^{2}_2+\kappa\|\nabla\vv_n(t)\|^{2}_2\right\}\bigg)^{\frac{pd}{2(d-2)}}\leq C.
\end{split}
\end{equation*}
If $p>2$, we can use the embedding \begin{align*}
	\L^\infty(0,T;\W_0^{1,2}(\1)^d)\cap\L^p(0,T;\W_0^{1,p}(\1)^d)\hookrightarrow \L^{\frac{pd}{d-2}}(0,T;\L^{\frac{pd}{d-2}}(\1)^d),
\end{align*}
together with Sobolev's, Hölder's and Korn's inequalities,  and with \eqref{ae1:un}, to show that
\begin{equation*}
\begin{split}
\int_{0}^{T}\|\vv_n(t)\|^{\frac{pd}{d-2}}_{\frac{pd}{d-2}}\d t & \leq C\int_{0}^{T}\|\nabla\vv_n(t)\|_2^{\frac{2p}{d-2}}\|\nabla\vv_n(t)\|_p^{p}\d t,\quad (p>2)  \\
& \leq C\bigg(\sup_{t\in[0,T]}\left\{\|\vv_n(t)\|^{2}_2+\kappa\|\nabla\vv_n(t)\|^{2}_2\right\}\bigg)^{\frac{2p}{2(d-2)}} \int_{0}^{T}\|\nabla\vv_n(t)\|_p^{p}\d t\leq C.
\end{split}
\end{equation*}
Hence, we obtain the required estimate \eqref{est:E:un2}.
\end{proof}

{
\begin{claim}\label{claim:4}
For any $p>\frac{2d}{d+2}$, there exist a positive constant $C$ (independent of $n$) such that
 \begin{align}\label{est:E:un:div}
	\int_{0}^{T}\|\vv_n(t)\otimes\vv_n(t)\|^{r_0}_{r_0}\d t+\int_{0}^{T}\|\divv(\vv_n(t)\otimes\vv_n(t))\|^{r_0}_{r_0}\dt\leq C,
\end{align}
for
\begin{align}\label{est:E:un:div2}
1\leq r_0\leq \frac{d}{d-1}.
\end{align}
\end{claim}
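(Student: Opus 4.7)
The plan is to derive both uniform bounds using only the $\L^\infty(0,T;\bfV)$ component of \eqref{ae1:un}, combined with the Sobolev embedding; remarkably, the $\L^p(0,T;\bfV_p)$ component plays no role, which explains why the estimate holds on the entire range $p>\frac{2d}{d+2}$. For $d\in\{3,4\}$, I would invoke $\W_0^{1,2}(\Omega)\hookrightarrow \L^{2d/(d-2)}(\Omega)$ to upgrade \eqref{ae1:un} to a uniform bound of $\{\vv_n\}_{n\in\N}$ in $\L^\infty(0,T;\L^{2d/(d-2)}(\Omega)^d)$; for $d=2$, the embedding into $\L^q(\Omega)$ holds for every finite $q$ and will serve the same purpose with an arbitrarily large finite exponent.

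For the first summand, I would observe that $\|\vv_n(t)\otimes\vv_n(t)\|_{d/(d-2)}=\|\vv_n(t)\|_{2d/(d-2)}^2$ when $d\geq 3$, giving a uniform bound of $\vv_n\otimes\vv_n$ in $\L^\infty(0,T;\L^{d/(d-2)}(\Omega)^{d\times d})$. Since $\Omega$ is bounded and $\frac{d}{d-1}\leq \frac{d}{d-2}$ whenever $d\geq 3$, the continuous embedding $\L^{d/(d-2)}(\Omega)\hookrightarrow \L^{r_0}(\Omega)$ holds for every $r_0\leq \frac{d}{d-1}$, and integration over the finite interval $[0,T]$ yields the required control on $\int_0^T\|\vv_n\otimes\vv_n\|_{r_0}^{r_0}\dt$. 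The $d=2$ case is handled identically with any sufficiently large finite exponent replacing $2d/(d-2)$.

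For the second summand, the divergence-free condition allows the reduction $\divv(\vv_n\otimes\vv_n)=(\vv_n\cdot\nabla)\vv_n$, and Hölder's inequality with the exponent pair $\bigl(\frac{2d}{d-2},\,2\bigr)$ produces precisely the critical value through
\begin{equation*}
\frac{d-2}{2d}+\frac{1}{2}=\frac{d-1}{d},
\end{equation*}
so that $\|(\vv_n\cdot\nabla)\vv_n\|_{d/(d-1)}\leq \|\vv_n\|_{2d/(d-2)}\|\nabla\vv_n\|_2$. Both factors are uniformly controlled in $\L^\infty(0,T)$ by \eqref{ae1:un} (using Sobolev for the first), which places $(\vv_n\cdot\nabla)\vv_n$ uniformly in $\L^\infty(0,T;\L^{d/(d-1)}(\Omega)^d)$; boundedness of $\Omega$ then extends this to every $r_0\leq \frac{d}{d-1}$, and integration in time is automatic. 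The main technical point is precisely this exponent bookkeeping, which pins down $\frac{d}{d-1}$ as the optimal exponent reachable from the energy estimate alone; the $d=2$ endpoint $r_0=2$ corresponds to the formal limit $\frac{2d}{d-2}=\infty$ and should be handled by replacing $\|\nabla\vv_n\|_2$ with the $\L^p(0,T;\L^p(\Omega))$ bound on $\nabla\vv_n$ together with a correspondingly large finite conjugate exponent for $\vv_n$, which is admissible once $p>1$.
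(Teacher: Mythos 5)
Your proof is correct and follows essentially the same route as the paper: both arguments rest solely on the uniform $\L^\infty(0,T;\bfV)$ bound from \eqref{ae1:un}, combined with the Sobolev embedding $\W^{1,2}_0(\Omega)^d\hookrightarrow\L^{2d/(d-2)}(\Omega)^d$ and H\"older's inequality with the exponent pair $\bigl(\tfrac{2d}{d-2},2\bigr)$, which is exactly how the paper also pins down $r_0\le\tfrac{d}{d-1}$ (you work at the endpoint exponent and then use the inclusion of Lebesgue spaces on the bounded domain, whereas the paper applies H\"older at each $r_0$ directly; the difference is cosmetic). The only inaccuracy is your patch for the $d=2$ endpoint $r_0=2$ --- replacing $\|\nabla\vv_n\|_2$ by the $\L^p$ bound on $\nabla\vv_n$ with $p$ possibly below $2$ does not close the H\"older bookkeeping --- but this is immaterial here, since for $d=2$ the range \eqref{case:p:small} in which Claim \ref{claim:4} is invoked is empty.
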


\begin{proof}[Proof of Claim~\ref{claim:4}]
Assume that $d\not=2$. By Hölder's inequality and \eqref{ae1:un}, one immediately has
\begin{equation*}
\int_{0}^{T}\|\vv_n(t)\otimes\vv_n(t)\|^{r_0}_{r_0}\d t\leq \int_{0}^{T}\|\vv_n(t)\|^{2r_0}_{2r_0}\d t
\leq {  C(T)\bigg(\sup_{t\in[0,T]}\|\nabla\vv_n(t)\|^{2}_{2}\bigg)^{r_0}}\leq C,
\end{equation*}
for
\begin{equation*}
1<r_0\leq\frac{d}{d-2}.
\end{equation*}

In turn, using Hölder's and Sobolev's inequalities, together with \eqref{ae1:un}, one has
\begin{equation*}
\begin{split}
 \int_{0}^{T}\|\divv(\vv_n(t)\otimes\vv_n(t))\|^{r_0}_{r_0}\d t & \leq
\int_{0}^{T}\left\|\big|\vv_n(t)\big|\,\big|\nabla\vv_n(t)\big|\right\|^{r_0}_{r_0}\d t \\
& \leq \int_{0}^{T}\|\vv_n(t)\|_{\frac{2r_0}{2-r_0}}^{r_0}\|\nabla\vv_n(t)\|_{2}^{r_0}\d t\\&
\leq C \int_0^T \|\nabla \vv_n(t)\|_{2}^{2r_0}\d t, \quad \text{ for } \quad 1\leq r_0\leq \frac{d}{d-1}
\\& \leq   C(T)\sup_{t\in[0,T]}\|\nabla\vv_n(t)\|^{2}_{2r_0}
 \leq C.
\end{split}
\end{equation*}
Combining the above estimates, we prove \eqref{est:E:un:div} for $r_0\in \Big[1,\frac{d}{d-1}\Big]$.

The case $d=2$ is easier.
The first part follows directly from \eqref{est:E:un2}, while the second part is a straightforward consequence of Sobolev’s inequality.
\end{proof}}

Using \eqref{ae1:un} and \eqref{ae2:un'}, together with \eqref{est:E:un:div}, we can apply the Banach-Alaoglu theorem, to pass to the limit along subsequences, still labeled by the same subscript, as follows,
		\begin{alignat}{2}
			\label{5.42:1}
			\vv_n&\xrightharpoonup[n\to\infty]{\ast} \vv,\quad && \text{in}\quad \L^\infty(0,T;\bfV),  \\
\label{5.42:21}
			\vv_n&\xrightharpoonup[n\to\infty]{\ast} \vv,\quad && \text{in}\quad \L^\infty(0,T;\bfV_p),  \\
			\label{5.42:2}
			\vv_n&\xrightharpoonup[n\to\infty]{} \vv,\quad  &&\text{in}\quad \L^{p}(0,T;\bfV_p),  \\
			\label{5.46}
			\bfA(\vv_n)&\xrightharpoonup[n\to\infty]{} \overline{\bmcA},\quad && \text{in}\quad \L^{p'}(0,T;\L^{p'}(\1)^{d\times d}),
			 \\
			\label{5.47}
			\bfA(\vv_n)&\xrightharpoonup[n\to\infty]{} \overline{\bmcA},\quad  &&\text{in}\quad \L^{p'}(0,T;\W^{-1,p'}(\1)^{d\times d}), \\
\label{5.42:8}
			{\partial_t\vv_{n}}&\xrightharpoonup[n\to\infty]{} {\partial_t\vv},\quad  &&\text{in}\quad \L^{2}(0,T;\bfV),
		\end{alignat}
From \eqref{5.42:1}, \eqref{5.42:2} and \eqref{5.42:8}, it follow assertions (1)-(2) of Definition~\ref{def.1}.
On the other hand, using Korn's inequality and \eqref{ae1:un}, one has
\begin{equation*}
\int_{0}^{T}\left\|\frac{1}{n}|\bfD(\vv_n(t))|^{\beta-2}\bfD(\vv_n(t))\right\|_{\beta'}^{\beta'}\dt\leq
\frac{1}{n^{\frac{1}{\beta-1}}}\frac{1}{n}\int_{0}^{T}\|\nabla\vv_n(t)\|^\beta_{\beta}\dt \leq \frac{C}{n^{\frac{1}{\beta-1}}},
\end{equation*}
which proves that
\begin{equation}\label{5.044}
		\frac{1}{n}|\bfD(\vv_n)|^{\beta-2}\bfD(\vv_n) \xrightarrow[n\to\infty]{} 0,\quad \text{in} \quad \L^{\beta'}(0,T;\L^{\beta'}(\1)^{d\times d}).
\end{equation}

\subsection{Pressure recovery and decomposition}\label{Subs:Press:Dec}
{
In the particular case of $p$ satisfying \eqref{3.7},} the reintroduction and decomposition of the pressure in the weak formulation constitute a structural necessity for establishing an existence result.
This procedure enables the precise separation of the inertial, viscous, and convective contributions, as stated in Theorem~\ref{thrm41}.
It further compensates for the lack of regularity of the velocity field, by providing the analytical framework required to control nonlinear terms under weaker integrability.
In addition, the decomposition plays a key role in compactness and convergence arguments, allowing one to isolate suitable regular pressure components from more singular parts, thereby ensuring strong convergence of the nonlinear expressions in the weak formulation.
Without this step, the interplay between the viscous nonlinearity and the limited compactness of the velocity field would obstruct the proof of existence.
For that purpose, let us fix
\begin{align}
	\bcG_{1,n}&: =\nu  \bfA(\vv_n)=\nu  |\bfD(\vv_n)|^{p-2}\bfD(\vv_n)\in \L^{p'}(0,T;\L^{p'}(\1)^{d\times d}), \label{G:1n}\\
	\bcG_{2,n}&:= -\nabla(-\Delta)^{-1}\f+\frac{1}{n}\bfB(\vv_n)+\vv_n\otimes\vv_n\in \L^{r_0}(0,T;\L^{r_0}(\1)^{d\times d}),
\label{G:2n}
\end{align}
for
{
\begin{equation}\label{def:r0}
r_0:=\frac{d}{d-1}.
\end{equation}
We emphasize that, in \eqref{G:2n} and \eqref{def:r0}, we have employed the continuity of
$\nabla \Delta^{-1}$ as an operator from $\L^{r_0}(\1)^d$ into
$\W^{1,r_0}(\1)^{d \times d}$, combined with assumption \eqref{Hyp:f} and the estimates
\eqref{est:E:un:div} and \eqref{est:E:un:div2}.
This yields \eqref{def:r0} with
$r_0=\min\left\{ p',\, \beta',\, \frac{d}{d-1} \right\}$.
But, due to assumption \eqref{def:beta:d}, it follows that $r_0 = \frac{d}{d-1}$.
}
By Theorem \ref{thrm41}, {there exist functions}
\begin{align*}
	\pi_{1,n}& \in \L^{p'}(0,T;\L^{p'}(\1)),\\
	\pi_{2,n} &\in \L^{r_0}(0,T;\W^{1,r_0}(\1)),\\
	\pi_{h,n} &\in \C_{\mathrm{w}}([0,T];\W^{1,2}(\1)),
\end{align*}
with $\Delta \pi_{h,n}=0$ and $\pi_{h,n}(0)=0$ {(cf.~\eqref{press:3:1}).
Moreover, in view of \eqref{press:3:2}-\eqref{press:3:4}, \eqref{ae1:un} and \eqref{G:1n}-\eqref{G:2n},}
\begin{align}
\label{5.53}
 \int_0^T\|\pi_{1,n}(t)\|_{p'}^{p'}\dt &\leq C_1, \\
\label{5.54}
  \int_0^T\left(\|\pi_{2,n}(t)\|_{r_0}^{r_0}+\|\nabla\pi_{2,n}(t)\|_{r_0}^{r_0}\right)\dt &\leq C_2 , \\
\nonumber
 \sup_{t\in[0,T]}\|\pi_{h,n}(t)\|_{1,2}& \leq C_3,
\end{align}
for some positive constants $C_1$, $C_2$ and $C_3$ that do not depend on $n$.
{In particular, using \eqref{decomp:press} of Theorem \ref{thrm41}, and \eqref{5.3:p}, we deduce
\begin{align}\label{5.49}\nonumber
&
-\!\int_{0}^{T}\!\!\!\!\int_{\Omega}\big(\vv_n{\partial_t\bfi} +\kappa\nabla\vv_n:\nabla{\partial_t\bfi}\big)\dxt
\\
& \quad\nonumber=-
\!\int_{0}^{T}\!\!\!\!\int_{\Omega}(\bcG_{1,n}-\pi_{1,n}\bfI):\nabla \bfi  \dxt
-
\!\int_{0}^{T}\!\!\!\!\int_{\Omega} (\bcG_{2,n}-\pi_{2,n}\bfI ): \nabla\bfi \dxt
\\
&\quad\quad
 + \int_\1\big(\vv_{0,n}\cdot\bfi(0)+\kappa\nabla\vv_{0,n}:\nabla\bfi(0)\big)\dx +\!\int_{0}^{T}\!\!\!\!\int_{\Omega}\nabla\pi_{h,n}\cdot{\partial_t\bfi}\dxt,
\end{align}
for all $\bfi \in \bfC^\infty(Q_T)$ with $\supp(\bfi)\Subset\1\times[0,T)$, {and for $\bcG_{1,n}$ and $\bcG_{2,n}$ defined in \eqref{G:1n} and \eqref{G:2n}, respectively.}

On the other hand, {
by \eqref{press:3:1}
\begin{equation}\label{press:3:h}
-\Delta \pi_{h,n}=0,\quad \mbox{in}\quad \Omega,\qquad \pi_{h,n}(0)=0,
\end{equation}
and so $\pi_{h,n}(t)$ is harmonic in $\1$ for any $t\in(0,T)$.}
By the local regularity theory (see
e.g.~\cite[Theorem~8.24]{GT:1998}), {and by using \eqref{press:3:2}, \eqref{prop:pi:h}-\eqref{prop:grad:pi:h}, Poincaré's inequality and \eqref{5.3:p}, it follows} that for all $t\in(0,T)$ and all $r\in[1,\infty]$,
\begin{equation*}
\|\pi_{h,n}(t)\|_{2,r}\leq C_1\|\pi_{h,n}(t)\|_{2}\leq C_2\big(\|\vv_n(t)-\vv_{0,n}\|_2+\kappa\|\nabla\vv_n(t)-\nabla\vv_{0,n}\|_2\big),
\end{equation*}
for some positive constants $C_1$ and $C_2$.
Hence, by using assumption \eqref{Hyp:u0}, {together with \eqref{uniform:cont:p}} and \eqref{5.42:1}, we have
\begin{equation}\label{5.57}
\|\pi_{h,n}\|_{\L^\infty(0,T;\W^{2,r}_\loc(\1))}\leq C,\quad \text{ for all }\  r\geq 1.
\end{equation}

Then, by using \eqref{5.53}-\eqref{5.54} and \eqref{5.57}, the Banach-Alaoglu theorem gives the existence of functions $\pi_h$, $\pi_1$ and $\pi_2$ such that,
for some subsequences, still labeled by the same subscript,
\begin{alignat}{2}
	\label{5.58}
	\pi_{h,n} &\xrightharpoonup[n\to\infty]{}  \pi_h, \text{ in }\L^r(0,T;\W_\loc^{2,r}(\1)),\quad \text{ for all }\  r\geq 1,\\
	\label{5.59}
	\pi_{1,n} &\xrightharpoonup[n\to\infty]{}\pi_1, \text{ in } \L^{p'}(0,T;\L^{p'}(\1)),\\
	\label{5.60}
	\pi_{2,n}&\xrightharpoonup[n\to\infty]{}   \pi_2,   \text{ in } \L^{r_0}(0,T;\W^{1,r_0}(\1)),
\end{alignat}
{for $r_0$ defined in \eqref{def:r0}.}
{Since the Laplacian $\Delta: \W^{2,r}_{\rm loc}(\Omega) \longrightarrow \L^r_{\rm loc}(\Omega)$ is linear and continuous,
we can pass \eqref{press:3:h} to the weak limit in $\L^r_{\rm loc}(\Omega)$ for almost every $t \in [0,T]$, so that
\begin{equation}\label{press:3:h:wl}
-\Delta \pi_{h}=0,\quad \mbox{in}\quad \Omega,\qquad \pi_{h}(0)=0.
\end{equation}}

\subsection{Compactness}\label{CP}
{Here, we first observe that by using \eqref{3p3:ap}, together with \eqref{Hyp:f}, \eqref{def:beta:d}, \eqref{ae1:un}-\eqref{ae2:un'} and \eqref{est:E:un2}-\eqref{est:E:un:div}, we can show that
\begin{align*}
	\big\|\Delta\vv_n\big\|_{\W^{1,r_0}(0,T;\W^{-1,r_0}(\1)^d)} \leq C,
\end{align*}
for $r_0$ given by \eqref{def:r0}.
As a consequence,
\begin{align}\label{est:dt:vn:1,r0}
	\big\|\vv_n\big\|_{\W^{1,r_0}(0,T;\W^{1,r_0}(\1)^d)} \leq C,
\end{align}
Such value of $r_0$ implies $r_0' = d$, and thus we have
\begin{equation*}
\W_0^{1,r_0}(\Omega)^d\hookrightarrow\L^{2}(\Omega)^d\hookrightarrow
\W^{-1,r_0}(\Omega)^d,\quad d\leq 4.
\end{equation*}
Therefore, from \eqref{est:dt:vn:1,r0}, we deduce
\begin{equation}\label{est:dt:vn:-1,r0}
\big\|\vv_n\big\|_{\W^{1,r_0}(0,T;\W^{-1,r_0}(\Omega)^d)}
\leq C.
\end{equation}
Observing again the value of $r_0$, we can see the following compact and continuous embeddings hold,
\begin{equation}\label{comp:r:min}
\W_0^{1,p}(\Omega)^d\hookrightarrow\hookrightarrow \L^{r}(\Omega)^d \hookrightarrow \W^{-1,r_0}(\Omega)^d,
\qquad 1\leq\ r < \frac{dp}{d-p},\quad p<d,
\end{equation}
or for any finite $r\geq 1$ if $p=d$.
Combining this with Lemma~\ref{lem:Sim:Am}-(1),
using the parameters $s=0$, $p=r$, $s_0=1$, $p_0=r_0$, $s_1=0$, $p_1=p$,
we obtain the compact embedding
\begin{equation}\label{comp:emb:r0:r}
\W^{1,r_0}(0,T;\W^{-1,r_0}(\Omega)^d)\cap
\L^p(0,T;\W_0^{1,p}(\Omega)^d)
\hookrightarrow\hookrightarrow
\L^r(0,T;\L^r(\Omega)^d),
\end{equation}
for every $r$ satisfying \eqref{comp:r:min}.
Hence, for such $r$ and in view of \eqref{ae1:un}, \eqref{est:dt:vn:-1,r0}, and \eqref{comp:emb:r0:r}, there exists a subsequence, still labeled by the subscript $n$, such that
\begin{equation}
\label{5.12:a.e-:n}
\vv_n \xrightarrow[n\to\infty]{} \vv, \quad \text{in}\quad \L^r(0,T;\L^r(\Omega)^d).
\end{equation}
Using the Riesz--Fischer theorem, we can extract a further subsequence, still denoted by $n$, for which
\begin{equation*}
\vv_n \xrightarrow[n \to \infty]{} \vv, \quad \text{a.e. in } Q_T,
\end{equation*}
and consequently,
\begin{equation}\label{5.12:a.e:n}
\vv_n \otimes \vv_n \xrightarrow[n \to \infty]{} \vv \otimes \vv, \quad \text{a.e. in } Q_T.
\end{equation}
Then, using \eqref{est:E:un:div} together with \eqref{5.12:a.e:n}, the same classical result applied in \eqref{lim:uxu} yields
\begin{equation}\label{5.45}
{\vv_n \otimes \vv_n \xrightharpoonup[n\to\infty]{} \vv \otimes \vv, \quad  \text{in } \L^{r_0}(0,T;\L^{r_0}(\Omega)^{d\times d})},
\end{equation}
for $r_0$ satisfying \eqref{est:E:un:div2}, and hence \eqref{def:r0}.
}}

\bigskip
{
Let us work now on the harmonic pressure term $\{\pi_{h,n}\}_{n\in\N}$, and prove that
\begin{align}\label{comp:emb:h:Lr}
	\L^\infty(0,T;\L^2(\Omega))\cap \{\Delta u(t)=0, \text{ for a.e. } t \}\ \hookrightarrow\hookrightarrow\  {\L^r(0,T;\L^r_{\rm{loc}}(\Omega))}, \quad \text{ for all }\ r>1.
\end{align}
To prove this, we start by considering an arbitrary but fixed compact set $\Omega'\Subset \Omega$, and observe from \eqref{press:3:h} that for each fixed $t$, $\pi_{h,n}(t)$ is harmonic.
By standard interior estimates for harmonic functions (see e.g.~\cite[Theorem~2.10]{GT:1998}),
\begin{equation*}
\|\pi_{h,n}(t)\|_{r,\Omega'}\leq C\|\pi_{h,n}(t)\|_{2,\Omega},\quad \text{ for all }\ r\geq 1,
\end{equation*}
for some positive constant $C$ independent of $t$.
If $\pi_{h,n}\in \L^\infty(0,T;\L^2(\Omega))$, the right-hand side is uniformly bounded, giving
\begin{equation*}
\pi_{h,n}\in \L^r(0,T;\L^r(\Omega')),\quad \text{ for all }\ r\geq 1.
\end{equation*}
By the Rellich-Kondrachov theorem, after extracting a subsequence,
\begin{equation*}
\pi_{h,n}(t)\xrightarrow[n \to \infty]{}\pi_{h}(t),\quad \mbox{in}\ \ \L^r(\Omega'),\quad \text{ for all }\ r> 1,
\end{equation*}
for a.e. $t$.
The uniform bound of $\pi_{h,n}(t)$ in $\L^r(\Omega')$ allows us to use the Lebesgue dominated convergence theorem to conclude for a further subsequence that
\begin{equation*}
\pi_{h,n}\xrightarrow[n \to \infty]{} \pi_{h}, \quad\text{in}\ \ \L^r(0,T;\L^r(\Omega')).
\end{equation*}
Since $\Omega'\Subset \Omega$ was chosen arbitrary, this proves that the embedding \eqref{comp:emb:h:Lr} is compact.
}
Hence, for some subsequence still labelled by the subscript $n$, we get from \eqref{comp:emb:h:Lr}
\begin{equation}\label{5.12:a.e-:pi:n}
\pi_{h,n} \xrightarrow[n\to\infty]{} \pi_h,\quad \text{in}\quad \L^r(0,T;\L^r_{\rm{loc}}(\1)),\quad \text{ for all }\ r> 1.
\end{equation}
{
Let us now fix $r>1$ and compactly contained sets $\Omega''\Subset \Omega'\Subset \Omega$.
For each $n$ and a.e.\ $t$, the difference
\begin{equation*}
\varpi_{h,n}(t):=\pi_{h,n}(t)-\pi_h(t)
\end{equation*}
is harmonic in $\Omega$.
By the interior estimate for harmonic functions (see e.g.~\cite[Theorem~9.11]{GT:1998}),
\begin{equation}\label{int:W2r}
\|\varpi_{h,n}(t)\|_{2,r,\Omega''} \leq C\,\|\varpi_{h,n}(t)\|_{r,\Omega'}, \quad \text{for a.e.\ } t\in(0,T),
\end{equation}
for some positive constant $C=C(\Omega',\Omega'',r)$.
Raising \eqref{int:W2r} to the $r$-th power and integrate over $(0,T)$, we get
\begin{equation*}
\int_{0}^{T}\|\varpi_{h,n}(t)\|_{2,r,\Omega''}^r\dt \le C\int_{0}^{T}\|\varpi_{h,n}(t)\|_{r,\Omega'}^r\dt.
\end{equation*}
By \eqref{5.12:a.e-:pi:n}, the right-hand side tends to $0$ as $n\to\infty$, hence
\begin{equation*}
\pi_{h,n}\xrightarrow[n \to \infty]{} \pi_h, \quad \text{in } \L^{r}(0,T;\W^{2,r}(\Omega'')).
\end{equation*}
Since $\Omega''\Subset\Omega$ is arbitrary, we conclude
\begin{equation}\label{str:con:W2r}
\pi_{h,n}\xrightarrow[n \to \infty]{} \pi_h, \quad \text{in } \L^{r}(0,T;\W^{2,r}_{\rm loc}(\Omega)).
\end{equation}

}

It last to show that $\overline{\bmcA}=\bfA(\vv)$.

\subsection{Passing to the limit $n\to\infty$}\label{PL}
Using the convergence results \eqref{5.46}, \eqref{5.42:8}, \eqref{5.044} and \eqref{5.45}, together with \eqref{uniform:cont:p}, \eqref{G:1n}-\eqref{G:2n} and \eqref{5.58}-\eqref{5.60}, we can pass to the limit $n\to\infty$ in \eqref{5.49} so that the following identity holds:
\begin{equation}\label{5.49:lim}
\begin{split}
&-\!\int_{0}^{T}\!\!\!\!\int_{\Omega}\big(\vv\cdot {\partial_t\bfi} +\kappa\nabla\vv:\nabla{\partial_t\bfi}\big)\dxt  \\
&\quad =
-\!\int_{0}^{T}\!\!\!\!\int_{\Omega}(\bcG_{1}-\pi_{1}\bfI):\nabla \bfi  \dxt
-
\!\int_{0}^{T}\!\!\!\!\int_{\Omega} (\bcG_{2}-\pi_{2}\bfI ): \nabla\bfi \dxt \\
&\qquad
+
\int_{\1}\big(\vv_{0}\cdot\bfi(0)+\kappa\nabla\vv_{0}:\nabla\bfi(0)\big)\dx +\!\int_{0}^{T}\!\!\!\!\int_{\Omega}\nabla\pi_{h}\cdot{\partial_t\bfi}\dxt,
\end{split}
\end{equation}
for all $\bfi \in \bfC^\infty(Q_T)$ with $\supp(\bfi)\Subset\1\times[0,T)$,
 where
\begin{alignat*}{2}
 \bcG_1:=-\nu\overline{\bmcA},\ \text{ and } \
 \bcG_2:=-\nabla (-\Delta)^{-1}\f+\vv\otimes \vv.
\end{alignat*}
Writing the difference between equations \eqref{5.49:lim} and \eqref{5.49}, we have
\begin{equation}\label{5.86}
\begin{split}
& -\!\int_{0}^{T}\!\!\!\!\int_{\Omega}\Big((\vv_n-\vv+\nabla\varpi_{h,n})\cdot{\partial_t\bfi} + \kappa\nabla(\vv_n-\vv):\nabla{\partial_t\bfi}\Big)\dxt  \\
& \quad = -\!\int_{0}^{T}\!\!\!\!\int_{\Omega}\big(\bcH_{1,n}-\varpi_{1,n}\bfI\big):\nabla\bfi  \dxt
-\!\int_{0}^{T}\!\!\!\!\int_{\Omega}(\bcH_{2,n}-\varpi_{2,n}\bfI):\nabla\bfi \dxt \\
&\qquad   + \int_\1\Big((\vv_{0,n}-\vv_0)\cdot\bfi(0) + \kappa\nabla(\vv_{0,n}-\vv_0):\nabla\bfi(0)\Big)\dx,
\end{split}
\end{equation}
for all $\bfi \in \bfC^\infty(Q_T)$ with $\supp(\bfi)\Subset\1\times[0,T)$, and where
\begin{align}	 \label{def:theta:hn1}
		\bcH_{1,n}&:=\nu\left(\bfA(\vv_n)-\overline{\bmcA}\right),  \\  \label{def:theta:hn2}
		\bcH_{2,n}&:=\vv_n\otimes \vv_n-\vv\otimes\vv + \frac{1}{n}|\bfD(\vv_n)|^{\beta-2}\bfD(\vv_n),  \\
		\varpi_{h,n}&:=\pi_{h,n}-\pi_h,\quad \varpi_{1,n}:=\pi_{1,n}-\pi_1,\quad \varpi_{2,n}:=\pi_{2,n}-\pi_2. \label{def:theta:hn}
	\end{align}
As a consequence of \eqref{5.42:2}, \eqref{5.46}, \eqref{5.044}, \eqref{5.12:a.e-:n} and \eqref{5.45}, we have
\begin{alignat}{2}
\label{5.75}
	\vv_n -\vv	&	 \xrightarrow[n\to\infty]{} \bf0,\quad \text{ in }\quad \L^r(0,T;\L^r(\1)^d),\\
	\label{5.74}
	\vv_n -\vv	&	 \xrightharpoonup[n\to\infty]{} \bf0,\quad \text{ in }\quad \L^p(0,T;\W_0^{1,p}(\1)^d),\\
	\label{5.76}
	\bcH_{1,n}&\xrightharpoonup[n\to\infty]{} \bf0,\quad \text{ in }\quad \L^{p'}(0,T;\L^{p'}(\1)^{d\times d}),\\
	\label{5.77}
	\bcH_{2,n}&\xrightharpoonup[n\to\infty]{} \bf0,\quad \text{ in }\quad \L^{r_0}(0,T;\W^{1,r_0}(\1)^{d\times d}),
\end{alignat}
for $r_0$ and $r$ satisfying \eqref{est:E:un:div2} and \eqref{comp:r:min}, respectively.
On its turn, by \eqref{5.59}-\eqref{5.60} and \eqref{str:con:W2r}, it is true that
\begin{alignat}{2}
	\label{5.79}
	\varpi_{h,n}&\xrightarrow[n\to\infty]{} 0,\quad \text{ in }\quad {\L^r(0,T;\W^{2,r}_{\rm{loc}}(\1)),\quad \text{ for all }\ r>1},\\
	\label{5.80}
	\varpi_{1,n}&\xrightharpoonup[n\to\infty]{} 0,\quad \text{ in }\quad  \L^{p'}(0,T;\L^{p'}(\1)),\\
	\label{5.81}
	\varpi_{2,n}&\xrightharpoonup[n\to\infty]{} 0,\quad \text{ in }\quad \L^{r_0}(0,T;\W^{1,r_0}(\1)),
\end{alignat}for $r_0$ and $r$ satisfying \eqref{est:E:un:div2} and \eqref{comp:r:min}, respectively.
Moreover, by \eqref{5.57}, we have
\begin{alignat}{2}
	\label{5.83}
	\varpi_{h,n}&\in \L^\infty(0,T;\W^{2,r}_\loc(\1)),\  \text{ for all }\  r\geq 1,
\end{alignat}
uniformly in $n$.


Let us introduce the sequence
\begin{equation}\label{eq:vv:n}
\w_n:= \vv_n+\nabla \varpi_{h,n},
\end{equation}
and the double sequence
\begin{align}\label{eq:vv:n2}
	\w_{n,j}=\w_n-\w_j,  \ \text{ where }\ \w_j= \vv_j+\nabla \varpi_{h,j},
\end{align} and
\begin{align}\label{eq:vv:n3}
 \w_{n,j}(0)= \w_{n,j}^0=
	\vv_n(0)-\vv_j(0)+\nabla \varpi_{h,n}(0)-\nabla \varpi_{h,j}(0),
\end{align}
for $n\geq j$.
{In view of \eqref{est:dt:vn:1,r0}, \eqref{5.75}, \eqref{5.79} and \eqref{5.81},}  we have
\begin{align}\label{eq:vv:n4}
	\w_{n,j} &\xrightharpoonup[j\to\infty]{} \bf0, \quad \text{ in }\quad \L^{r_0}(0,T;\W^{1,r_0}(\1)^{d\times d}), \\\label{eq:vv:n5}
		\w_{n,j}	&	 \xrightarrow[j\to\infty]{} \bf0,\quad \text{in}\quad\ \  \L^r(0,T;\L^r_\loc (\1)^d),
\end{align}for $r_0$ and $r$ satisfying \eqref{est:E:un:div2} and \eqref{comp:r:min}, respectively.
{From \eqref{5.86}, we deduce that for $n\geq j$}
\begin{equation}\label{5.v.nj:p}
\begin{split}
		& -\!\int_{0}^{T}\!\!\!\!\int_{\Omega}\Big(\big(\vv_n-\vv_j+\nabla(\varpi_{h,n}-\varpi_{h,j})\big)\cdot{\partial_t\bfi} + \kappa\nabla(\vv_n-\vv_j):\nabla{\partial_t\bfi}\Big)\dxt  \\
		&=-
		\!\int_{0}^{T}\!\!\!\!\int_{\Omega}\big(\bcH_{1,n,j}-\varpi_{1,n,j}\bfI\big):\nabla\bfi  \dxt
		-\!\int_{0}^{T}\!\!\!\!\int_{\Omega}(\bcH_{2,n,j}-\varpi_{2,n,j}\bfI):\nabla\bfi \dxt \\
		&  + \int_{\Omega}\Big(\big(\vv_{n}(0)-\vv_{j}(0)+\nabla(\varpi_{h,n}(0)-\varpi_{h,j}(0))\big)\cdot\bfi(0) + \kappa\nabla(\vv_{n}(0)-\vv_{j}(0)):\nabla{\bfi(0)}\Big)\dx,
\end{split}
\end{equation}
for all $\bfi \in \bfC^\infty(Q_T)$ with $\supp(\bfi)\Subset\1\times[0,T)$, {and where for each $i\in\{1,2\}$
\begin{alignat}{2}
& \bcH_{i,n,j}:= \bcH_{i,n}-\bcH_{i,j}, \label{H:inj} \\
& \varpi_{i,n,j}:=\varpi_{i,n}-\varpi_{i,j} \label{var:inj}.
\end{alignat}
}
{Using the definition of the operator $\bfI-\kappa\Delta$ (see \eqref{PD3}), together with \eqref{def:theta:hn},  \eqref{press:3:h}, \eqref{press:3:h:wl} and \eqref{eq:vv:n}--\eqref{eq:vv:n2}, and the fact that $\div\nabla^2=\nabla\Delta$, we see that
	\begin{alignat*}{2}
		\begin{split}
			& \!\int_{0}^{T}\!\!\!\!\int_{\Omega}\Big(\big(\vv_n-\vv_j+\nabla(\varpi_{h,n}-\varpi_{h,j})\big)\cdot{\partial_t\bfi} + \kappa\nabla(\vv_n-\vv_j):\nabla{\partial_t\bfi}\Big)\dxt
			\\& =
			\!\int_{0}^{T}\!\!\!\!\int_{\Omega}(\bfI-\kappa\Delta)\w_{n,j}\cdot{\partial_t\bfi}  \dxt,
		\end{split}
	\end{alignat*}and
	\begin{alignat*}{2}
		\begin{split}
			& \int_{\Omega}\Big(\big(\vv_{n}(0)-\vv_j(0)+\nabla(\varpi_{h,n}(0)-\varpi_{h,j}(0))\big)\cdot\bfi(0) + \kappa\nabla(\vv_{n}(0)-\vv_{j}(0)):\nabla{\bfi(0)}\Big)\dx
			\\&=
			\int_\1(\bfI-\kappa\Delta)\w_{n,j}^0\cdot\bfi(0)\dx.
		\end{split}
	\end{alignat*}
	Hence \eqref{5.v.nj:p} can be written as follows, }
\begin{equation}\label{5.v.nj}
\begin{split}
		&
		-\!\int_{0}^{T}\!\!\!\!\int_{\Omega}(\bfI-\kappa \Delta)\w_{n,j}\cdot{\partial_t\bfi}  \dxt \\
		&\quad =
	-	\!\int_{0}^{T}\!\!\!\!\int_{\Omega}\big(\bcH_{1,n,j}-\varpi_{1,n,j}\bfI\big):\nabla\bfi  \dxt
		-\!\int_{0}^{T}\!\!\!\!\int_{\Omega}(\bcH_{2,n,j}-\varpi_{2,n,j}\bfI):\nabla\bfi \dxt \\
		& \qquad  + \int_\1(\bfI-\kappa \Delta)\w_{n,j}^0\cdot\bfi(0)\dx,
\end{split}
\end{equation}
for all $\bfi \in \bfC^\infty(Q_T)$ with $\supp(\bfi)\Subset\1\times[0,T)$.

We now show that, by testing \eqref{5.v.nj} with $\w_{n,j}$, the right-hand side of \eqref{5.v.nj} is well defined.
In fact, it suffices to verify that the first and second terms on the right-hand side of \eqref{5.v.nj} are well defined when tested with $\w_{n,j}$.

Let us consider the case $d \neq 2$ (since the case $d=2$ is comparatively easier).
Using Sobolev's embedding, we obtain
\begin{align*}
	\bfV {\hookrightarrow}  {\bfL_\sigma^{r_0'}(\1)},\quad  r_0'\leq \frac{2d}{d-2}\Leftrightarrow r_0\geq \frac{2d}{d+2}.
\end{align*}
From the above condition and \eqref{est:E:un:div2}, we deduce
\begin{align}\label{SE1}
\frac{2d}{d+2} \leq r_0 \leq \frac{d}{d-1},\quad {d\leq 4}.
\end{align}
Using integration by parts, H\"older's and Korn's inequalities, and still \eqref{5.74}, \eqref{5.76} and \eqref{5.80} (to be more specific \eqref{ae1:un}, \eqref{G:1n} and \eqref{5.59}), we have
\begin{equation}\label{REE2}
\begin{split}
&	\int_0^T\int_\Omega \divv \big(\bcH_{1,n,j}-\varpi_{1,n,j}\bfI\big)\cdot \w_{n,j}\d \x\d t  \\
& =
-	\int_0^T\int_\Omega  \big(\bcH_{1,n,j}-\varpi_{1,n,j}\bfI\big): \bfD( \w_{n,j})\d \x\d t \\
& \leq
C
\bigg(\int_0^T \|\bcH_{1,n,j}(t)-\varpi_{1,n,j}(t)\bfI\|_{p'}^{p'}\d t \bigg)^{\frac{1}{p'}}
\bigg(\int_0^T \| \nabla  \w_{n,j}(t)\|_{p}^{p}\d t \bigg)^{\frac{1}{p}}\\&
\leq
C
\bigg(\int_0^T\big( \|\bcH_{1,n,j}(t)\|_{p'}^{p'}+\|\varpi_{1,n,j}(t)\bfI\|_{p'}^{p'}\big)\d t \bigg)^{\frac{1}{p'}}
\bigg(\int_0^T \| \nabla  \w_{n,j}(t)\|_{p}^{p}\d t \bigg)^{\frac{1}{p}}\\&
< \infty.
\end{split}
\end{equation}

Using Hölder's and Sobolev's inequalities, together with \eqref{5.74}, \eqref{5.77} and \eqref{5.81} (to be more specific \eqref{ae1:un}, \eqref{5.60} and \eqref{5.77}), we obtain
\begin{equation}\label{RE1}
\begin{split}
&	\!\int_{0}^{T}\!\!\!\!\int_{\Omega} \divv \big(\bcH_{2,n,j}-\varpi_{2,n,j}\bfI \big)\cdot \w_{n,j}\dxt  \\
&\leq
 \int_{0}^T \big\|\divv \big(\bcH_{2,n,j}(t)-\varpi_{2,n,j}(t)\bfI\big)\big\|_{r_0}\|\w_{n,j}(t)\|_{r_0'} \d t  \\
&  \leq C
\bigg(\int_{0}^T \big\|\divv \big(\bcH_{2,n,j}(t)-\varpi_{2,n,j}(t)\bfI \big)\big\|_{r_0}^{r_0} \d t\bigg)^{\frac{1}{r_0}}\bigg(\int_{0}^T\|\nabla\w_{n,j}(t)\|_{2}^{r_0'} \d t\bigg)^{\frac{1}{r_0'}}, \quad d\leq 4,
\\
&\leq
C\bigg(\int_{0}^T \big\|\divv \big(\bcH_{2,n,j}(t)-\varpi_{2,n,j}(t)\bfI \big)\big\|_{r_0}^{r_0} \d t\bigg)^{\frac{1}{r_0}}\sup_{t\in [0,T]}\|\nabla \w_{n,j}(t)\|_2  \\&
\leq
C\bigg(\int_{0}^T \Big(\big\|\divv \big(\bcH_{2,n,j}(t)\|_{r_0}^{r_0}+\|\varpi_{2,n,j}(t)\bfI \big)\big\|_{r_0}^{r_0}\Big) \d t\bigg)^{\frac{1}{r_0}}\sup_{t\in [0,T]}\|\nabla \w_{n,j}(t)\|_2  \\&
<\infty.
\end{split}
\end{equation}

The application of the Minty trick in this case is stated in the following claim.
\begin{claim}\label{claim:5}
	\begin{align}
		\overline{\bmcA}=\bfA(\vv).
	\end{align}
\end{claim}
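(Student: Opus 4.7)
The plan is to establish Claim~\ref{claim:5} via a monotonicity argument applied to the double-indexed sequence $\w_{n,j}$ introduced in \eqref{eq:vv:n2}. The pivotal observation is that, since $\pi_{h,n}$ is harmonic in $\Omega$ by \eqref{press:3:h}, one has $\divv(\nabla(\varpi_{h,n}-\varpi_{h,j}))=\Delta(\varpi_{h,n}-\varpi_{h,j})=0$, and therefore $\w_{n,j}$ is divergence-free. This is essential because it forces the pressure contributions $\int_0^T\!\!\int_\Omega \varpi_{i,n,j}\,\divv\w_{n,j}\dxt$, for $i\in\{1,2\}$, to vanish after integration by parts, thereby isolating the two stress-like integrals on the right-hand side of \eqref{5.v.nj}.

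The first step would be to test \eqref{5.v.nj} with $\bfi(\x,t)=\eta_\varepsilon(t)(\varrho_\delta\ast\w_{n,j})(\x,t)$, where $\eta_\varepsilon$ is a smooth time-cutoff vanishing near $t=T$ and $\varrho_\delta$ is a spatial mollifier; passing $\delta,\varepsilon\to 0$ by virtue of the uniform bound \eqref{est:dt:vn:-1,r0} and the admissibility computations \eqref{REE2}-\eqref{RE1}, an integration by parts in time together with the operator identity \eqref{PD3} yields the energy identity
\begin{equation*}
\begin{split}
& \frac{1}{2}\Big(\|\w_{n,j}(T)\|_{2}^{2}+\kappa\|\nabla\w_{n,j}(T)\|_{2}^{2}\Big) + \int_{0}^{T}\!\!\!\!\int_{\Omega}\bcH_{1,n,j}:\bfD(\w_{n,j})\dxt \\
& \qquad + \int_{0}^{T}\!\!\!\!\int_{\Omega}\bcH_{2,n,j}:\nabla\w_{n,j}\dxt = \frac{1}{2}\Big(\|\w_{n,j}^{0}\|_{2}^{2}+\kappa\|\nabla\w_{n,j}^{0}\|_{2}^{2}\Big).
\end{split}
\end{equation*}
The right-hand side vanishes as $n,j\to\infty$ because $\vv_{0,n}\to\vv_0$ in $\bfV$ by \eqref{von->v0}-\eqref{uniform:cont:p} and $\pi_{h,n}(0)=0$ by \eqref{press:3:h}; the boundary contribution at $t=T$ is nonnegative; and the $\bcH_{2,n,j}$ integral tends to zero thanks to the local strong convergence \eqref{eq:vv:n5} of $\w_{n,j}$, the weak convergence \eqref{5.45} of $\vv_n\otimes\vv_n$, and the vanishing regularizer \eqref{5.044}. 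Recalling that $\bcH_{1,n,j}=\nu(\bfA(\vv_n)-\bfA(\vv_j))$ and decomposing $\bfD(\w_{n,j})=\bfD(\vv_n-\vv_j)+\bfD(\nabla(\varpi_{h,n}-\varpi_{h,j}))$, the local strong convergence \eqref{str:con:W2r} of the harmonic pressure combined with the uniform $\L^{p'}$ bound on $\bfA(\vv_n)$ from \eqref{5.46} implies that the cross term involving $\nabla(\varpi_{h,n}-\varpi_{h,j})$ tends to $0$, so that collecting all contributions one concludes
\begin{equation*}
\lim_{n,j\to\infty}\int_{0}^{T}\!\!\!\!\int_{\Omega}\big(\bfA(\vv_n)-\bfA(\vv_j)\big):\bfD(\vv_n-\vv_j)\dxt = 0.
\end{equation*}

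Applying the sharp monotonicity inequalities \eqref{2.2}-\eqref{2.3} from Lemma~\ref{GM}, this Cauchy-in-monotone-sense property forces $\bfD(\vv_n)$ to be a Cauchy sequence in $\L^p(Q_T)^{d\times d}$: the case $p\geq 2$ is immediate from \eqref{2.2}, whereas in the shear-thinning regime $1<p<2$ one combines \eqref{2.3} with H\"older's inequality against the uniform $\L^p$ bound on $|\bfD(\vv_n)|+|\bfD(\vv_j)|$ delivered by \eqref{ae1:un}. Passing to a subsequence, $\bfD(\vv_n)\to\bfD(\vv)$ almost everywhere on $Q_T$, and by continuity of the map $\bxi\mapsto|\bxi|^{p-2}\bxi$ we deduce $\bfA(\vv_n)\to\bfA(\vv)$ almost everywhere. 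Combined with the uniform $\L^{p'}$ bound and Vitali's convergence theorem, together with the uniqueness of weak limits in view of \eqref{5.46}, this identifies $\overline{\bmcA}=\bfA(\vv)$, as claimed.

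The main obstacle I anticipate is the rigorous justification of testing \eqref{5.v.nj} with $\w_{n,j}$: the harmonic pressure correction $\nabla\varpi_{h,n}$ possesses only local regularity via \eqref{5.57}-\eqref{5.83}, so the duality pairing must accommodate this limitation, typically through a spatial cut-off that exploits the interior regularity and allows one to absorb boundary remainders, or alternatively via a spatial mollification argument leveraging the uniform bounds \eqref{ae1:un}-\eqref{ae2:un'}. A related subtlety lies in handling the convective cross-term in $\bcH_{2,n,j}:\nabla\w_{n,j}$, where one must combine the local strong convergence \eqref{5.12:a.e-:n} with the weak convergence \eqref{5.45} of the quadratic term to avoid picking up a defect measure.
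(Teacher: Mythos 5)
Your proposal is correct and follows essentially the same route as the paper: test the difference identity \eqref{5.v.nj} with $\w_{n,j}$, discard the nonnegative terminal term, show that the initial-data, convective/regularizer, and harmonic-pressure cross terms all vanish in the limit, and conclude that $\int_{Q_T}\big(\bfA(\vv_n)-\bfA(\vv_j)\big):\bfD(\vv_n-\vv_j)\to 0$, from which the identification $\overline{\bmcA}=\bfA(\vv)$ follows by (strict) monotonicity. The only cosmetic difference is the final step, where you extract a Cauchy property of $\bfD(\vv_n)$ in $\L^p(Q_T)$ via the quantitative inequalities \eqref{2.2}--\eqref{2.3} of Lemma~\ref{GM}, whereas the paper uses Fatou's lemma and strict monotonicity to obtain an almost-everywhere Cauchy sequence; both yield the a.e.\ convergence of $\bfD(\vv_n)$ needed to identify the weak limit.
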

\begin{proof}[Proof of Claim \ref{claim:5}]
Keep in mind that \eqref{5.v.nj} holds for all $\bfi\in \bfC^\infty(Q_T)$ with $\supp(\bfi)\Subset \Omega\times [0,T)$.
We can select our test functions ${\bpsi} \in  \L^\infty(0,T;\bfV)\cap \L^p(0,T;\bfV_p)$ (the class of admissible solutions) by using a standard density argument (see \eqref{SE1}--\eqref{RE1}).
This indicates that there is a sequence of test functions $\bfi_m\in \bfC^\infty(Q_T)$ with $\supp(\bfi_m)\Subset \Omega\times [0,T)$, such that $\bfi_m$ converges to ${\bpsi}$, in
$\L^\infty(0,T;\bfV)\cap \L^p(0,T;\bfV_p)$.
Given this information, we test \eqref{5.v.nj} with $\w_{n,j}$ (defined in \eqref{eq:vv:n2}) to get

\begin{equation}\label{RE3}
\begin{split}
&	\|\w_{n,j}(T)\|_2^2+\kappa \|\nabla \w_{n,j}(T)\|_2^2 \\
& = -\left(\|\w_{n,j}^0\|_2^2+\kappa \|\nabla \w_{n,j}^0\|_2^2\right) +
    2 \!\int_{0}^{T}\!\!\!\!\int_{\Omega} \divv \big(\bcH_{1,n,j}-\varpi_{1,n,j}\bfI\big)\cdot\w_{n,j}\dxt \\
& \quad +2 \!\int_{0}^{T}\!\!\!\!\int_{\Omega} \big(\bcH_{2,n,j}-\varpi_{2,n,j}\bfI\big):\nabla\w_{n,j}\dxt  \\
& =: \sum_{i=1}^3	J_{i,n,j},
\end{split}
\end{equation}
{where we have used \eqref{eq:vv:n2}--\eqref{eq:vv:n3}.}
{Using \eqref{von->v0} together with \eqref{eq:vv:n3}, we deduce that
\begin{equation}\label{conv:J1+2nj}
J_{1,n,j} \xrightarrow[j \to \infty]{} 0.
\end{equation}
By the} strong convergence \eqref{eq:vv:n5}, we obtain
\begin{align*}
	\w_{n,j}\xrightarrow[j\to\infty]{} \bf0, \ \text{ in } \ \L^{r}(0,T;\L^{r}_\loc(\1)^d),
\end{align*}
for $r$ satisfying \eqref{comp:r:min}.
{Combining \eqref{5.77}, \eqref{5.81}, and the above convergence, we conclude that
\begin{equation}\label{conv:J4nj}
J_{3,n,j}\xrightarrow[j\to\infty]{}0.
\end{equation}
}From \eqref{RE3}, we have
\begin{equation*}
\begin{split}
\!\int_{0}^{T}\!\!\!\!\int_{\Omega}\bcH_{1,n,j}: \nabla\w_{n,j}\dxt =
& - \frac{1}{2}\left(	
\|\w_{n,j}(T)\|_2^2+\kappa \|\nabla \w_{n,j}(T)\|_2^2\right)
+ \frac{1}{2}\left(J_{1,n,j}+J_{3,n,j} \right)\\
& + \!\int_{0}^{T}\!\!\!\!\int_{\Omega} \nabla\varpi_{1,n,j}\cdot\w_{n,j}\dxt.
\end{split}
\end{equation*}
{Using \eqref{def:theta:hn1}, \eqref{eq:vv:n} and \eqref{H:inj}--\eqref{var:inj}, and performing integration by parts in the resulting second left-hand side term, we get}
\begin{equation}\label{ME1}
\begin{split}
	&\nu	\!\int_{0}^{T}\!\!\!\!\int_{\Omega}\big(\bfA(\vv_n)-\bfA(\vv_j)\big): \bfD (\vv_n-\vv_j)\dxt -\underbrace{\!\int_{0}^{T}\!\!\!\!\int_{\Omega}\divv \bcH_{1,n,j}\cdot  (\nabla\varpi_{h,n}-\nabla\varpi_{h,j})\dxt}_{\mathscr{I}_{n,j}} \\
    &=
	\underbrace{- \frac{1}{2}\left(	
	\|\w_{n,j}(T)\|_2^2+\kappa \|\nabla \w_{n,j}(T)\|_2^2\right)}_{\leq 0}
	+ \frac{1}{2}\left(J_{1,n,j}+J_{3,n,j} \right).
\end{split}
\end{equation}
{According to \eqref{conv:J1+2nj} and \eqref{conv:J4nj},
\begin{equation*}
J_{1,n,j}+J_{3,n,j}\xrightarrow[j\to\infty]{}0.
\end{equation*}
A standard density argument combined with \eqref{5.79} lead to
\begin{equation*}
\mathscr{I}_{n,j}\xrightarrow[j\to\infty]{}0.
\end{equation*}
}
More specifically, we use the strong convergence (see \eqref{5.79}) of the harmonic pressure term $\varpi_{h,n}$ on compact subsets of $\Omega\times[0,T)$ to approximate the solution by a sequence of test functions with support compactly embedded in $\Omega\times[0,T)$ that converges to the solution.
Therefore,  we have
\begin{align}\label{RS4}
	\limsup_{j\to\infty} \int_0^T\int_\1 \big(\bfA (\vv_n)-\bfA(\vv_j)\big):\big(\bfD(\vv_n)-\bfD(\vv_j) \big)\dxt\leq 0.
\end{align}
Employing the monotonicity of the nonlinear operator $\bfA(\cdot)$  (see \eqref{2.4}) and \eqref{RS4}, we find
\begin{align}\label{RS6}
		\limsup_{j\to\infty} \!\int_{0}^{T}\!\!\!\!\int_{\Omega} \big(\bfA (\vv_n)-\bfA(\vv_j)\big):\big(\bfD(\vv_n)-\bfD(\vv_j) \big)\dxt = 0.
\end{align}
{Now, set
\begin{equation*}
\mathcal{F}_{n,j}:=\big(\bfA (\vv_n)-\bfA(\vv_j)\big):\big(\bfD(\vv_n)-\bfD(\vv_j) \big).
\end{equation*}
Combining \eqref{RS6} with Fatou's lemma, there exists a subsequence $\mathcal{F}_{n,j_k}$ such that
\begin{equation*}
\mathcal{F}_{n,j_k}\xrightarrow[k\to\infty]{}0,\qquad \text{for}\quad \lambda^{d+1}\text{-a.e. }(\x,t)\in\Omega\times(0,T).
\end{equation*}
Using the strict monotonicity of $\bfA(\cdot)$, we conclude that
\begin{equation*}
\bfD(\vv_{j_k})\xrightarrow[k\to\infty]{} \bfD(\vv_n),\qquad \text{for}\quad \lambda^{d+1}\text{-a.e. }(\x,t)\in\Omega\times(0,T).
\end{equation*}
Finally, for any $k,\ell$,
\begin{equation*}
|\bfD(\vv_{j_k})-\bfD(\vv_{j_\ell})|
\le
|\bfD(\vv_{j_k})-\bfD(\vv_n)|
+
|\bfD(\vv_n)-\bfD(\vv_{j_\ell})| ,
\end{equation*}
and both terms on the right-hand side converge to zero $\lambda^{d+1}$-a.e.
Therefore, $\{\bfD(\vv_j)\}_{j\in\N}$ is a Cauchy sequence $\lambda^{d+1}$-almost everywhere.
Thus, the limit function exists $\lambda^{d+1}$-a.e.~but has to be equal to $\bfD(\vv)$ on account of \eqref{5.74}, that is,
\begin{align*}
	\bfD(\vv_n) \xrightarrow[n\to\infty]{} \bfD(\vv),\qquad \text{for}\quad \lambda^{d+1}\text{-a.e. }(\x,t)\in\Omega\times(0,T).
\end{align*}
We address the reader to \cite[Eqn. (1.6)]{LBJMMS} or \cite{JFJMMS} for more details.

With the help of above facts, we validate the limit process and assure that
\begin{equation*}
\overline{\bmcA}=\bfA(\vv),\qquad \text{for}\quad \lambda^{d+1}\text{-a.e. }(\x,t)\in\Omega\times(0,T).
\end{equation*}
The proof of Claim \ref{claim:5} is therefore over.
For a detailed proof of the approach used here, we address the reader to \cite[Appendix A]{Wolf:2007} or \cite[pp. 324]{Breit:2015}).
}
\end{proof}

With the above result, the proof of Theorem~\ref{thm:exist:1} is complete.

\section{Uniqueness}\label{Sect:unique}
{The final task in this work is to prove Theorem \ref{UniT}, which asserts the uniqueness of weak solutions to the problem \eqref{1.1}.
}

{
\begin{proof}
Let $\boldsymbol{v}_1, \boldsymbol{v}_2$ be two weak solutions  to the problem \eqref{1.1} in the sense of Definition~\ref{def.1}, with the same initial data $\boldsymbol{v}_0$ and forcing term $\boldsymbol{f}$.
Define
\begin{equation*}
\boldsymbol{w} := \boldsymbol{v}_1 - \boldsymbol{v}_2,
\end{equation*}
and note that $\divv \w=0$ in $Q_T$ and $\w(0)=0$ in $\Omega$.
Since $\w$ is not admissible as a test function in \eqref{3p3}, we introduce a mollification $\w^\varepsilon := \rho_\varepsilon * \w$, where the convolution is taken with respect to the time variable.
In particular, $\rho_\varepsilon(t)$ is supported in $(-\varepsilon,\varepsilon)$.
Then $\w^\varepsilon \in \bfC^\infty([0,T];\bfV_p)$, $\divv \w^\varepsilon=0$, $\w^\varepsilon(\cdot,T)=0$ for $\varepsilon$ small enough, and
\begin{equation}
\label{sc:varep}
\w^\varepsilon \xrightarrow[\varepsilon\to0]{} \w,\quad \text{in}\quad \L^2(0,T;\bfV)\cap\L^p(0,T;\bfV_p).
\end{equation}
Using $\boldsymbol{w}^\varepsilon$ as a test function in \eqref{3p3} and subtracting the two weak formulations gives
\begin{align*}
& \int_0^t\!\!\int_\Omega \partial_\tau\boldsymbol{w} \cdot \boldsymbol{w}^\varepsilon \, \d \x\d\tau
+ \kappa \int_0^t\!\!\int_\Omega \nabla \partial_\tau\boldsymbol{w} : \nabla \boldsymbol{w}^\varepsilon \, \d \x\d\tau\ \\&\  \ +  \nu \int_0^t\!\!\int_\Omega \big(\bfA(\vv_1)-\bfA(\vv_2)\big):\mathbf{D}(\boldsymbol{w}^\varepsilon) \, \d \x\d\tau  \\
&= \int_0^t\!\!\int_\Omega (\boldsymbol{v}_1 \otimes \boldsymbol{v}_1 - \boldsymbol{v}_2 \otimes \boldsymbol{v}_2) : \nabla \boldsymbol{w}^\varepsilon \, \d \x\d\tau.
\end{align*}
Passing to the limit $\varepsilon \to 0$, using for this purpose \eqref{sc:varep}, this yields
\begin{equation}\label{dif:ws:v1-v2}
\begin{split}
& \frac{1}{2}\|\w(t)\|_{2}^2 + \frac{\kappa}{2}\|\nabla \w(t)\|_{2}^2 + \nu \int_0^t\!\!\int_\Omega \big(\bfA(\vv_1)-\bfA(\vv_2)\big):\bfD(\w)\,\d\x\d\tau  \\
&=  \int_0^t\!\!\int_\Omega (\vv_1\otimes\vv_1-\vv_2\otimes\vv_2):\mathds{\nabla}\w\d\x\d\tau.
\end{split}
\end{equation}

Combining the identity
\begin{equation*}
\vv_1\otimes\vv_1-\vv_2\otimes\vv_2= \vv_1\otimes\w + \w\otimes\vv_2
\end{equation*}
with the  monotonicity of $\bfA(\cdot)$ (see \eqref{2.4}), we obtain
\begin{equation}\label{dif:ws:v1-v2-1}
\begin{split}
& \frac{1}{2}\|\w(t)\|_{2}^2 + \frac{\kappa}{2}\|\nabla \w(t)\|_{2}^2 \leq
\int_0^t\!\!\int_\Omega \vv_1\otimes\w:\mathds{\nabla}\w\d\x\d\tau  + \int_0^t\!\!\int_\Omega \w\otimes\vv_2:\mathds{\nabla}\w\d\x\d\tau .
\end{split}
\end{equation}
Using integration by parts over $\Omega$, the facts that $\divv\vv_1=0$ in $Q_T$ and $\vv_1=0$ on $\partial\Omega$ in the sense of trace, the first right-hand side term vanishes.
For the second right-hand side term, we use integration by parts,  the fact that $\divv\w=0$ in $Q_T$, Hölder's and Sobolev's inequalities, and the fact that $\boldsymbol{v}_2\in\L^\infty(0,T;\bfV)$.
After all, we obtain
\begin{equation*}
\begin{split}
\left|\int_0^t \int_\Omega \boldsymbol{w} \otimes \boldsymbol{v}_2 : \nabla \boldsymbol{w}\d\xd\tau\right| & =
\left|-\int_0^t \int_\Omega (\boldsymbol{w}\cdot\nabla)\boldsymbol{v}_2\cdot\boldsymbol{w}\d\xd\tau \right|\leq
 \int_0^t \|\nabla\boldsymbol{v}_2(\tau) \|_2 \|\boldsymbol{w}(\tau) \|_4^2 \d\tau \\
& \leq  C\int_0^t \|\nabla\boldsymbol{w}(\tau) \|_2^2 \d\tau, \qquad d\leq 4,
\end{split}
\end{equation*}
for some positive constant $C$.
Using this information in \eqref{dif:ws:v1-v2-1}, we get
\begin{equation}\label{dif:ws:v1-v2:a}
\|\nabla \w(t)\|_{2}^2 \leq C\int_0^t \|\nabla\boldsymbol{w}(\tau) \|_2^2 \d\tau,
\end{equation}
for a distinct positive constant $C$.

Applying Grönwall's inequality to \eqref{dif:ws:v1-v2:a}, we obtain
\begin{equation*}
	\|\nabla \w(t)\|_2^2 \leq \|\nabla \w(0)\|_2^2 e^{Ct},\ \text{ for all }\ \ t\in[0,T].
\end{equation*}
As $\w(0)=\boldsymbol{0}$, Sobolev's inequality implies that $\vv_1=\vv_2$ a.e. in $\Omega$ and for all $t\in [0,T]$ and hence the weak solution to the system \eqref{1.1} is unique.
\end{proof}

}

	\medskip\noindent
\textbf{Acknowledgments:} A. Kumar acknowledges partial funding by the Austrian Science Fund (FWF), Project Number: P 34681 (\href{https://www.fwf.ac.at/en/research-radar/10.55776/P34681}{10.55776/P34681}) and \href{https://www.fwf.ac.at/forschungsradar/10.55776/ESP4373225}{10.55776/ESP4373225}.
The work of H. B. de Oliveira was supported by CIDMA (https://ror.org/05pm2mw36)
under the Portuguese Foundation for Science and Technology
(FCT, https://ror.org/00snfqn58), Grant
UID/04106/2025 (\url{https://doi.org/10.54499/UID/04106/2025}), and by the Grant no. AP23486218 of the Ministry of Science and Higher Education of the Republic of Kazakhstan.
Support for M. T. Mohan's research received from the National Board of Higher Mathematics (NBHM), Department of Atomic Energy, Government of India (Project No. 02011/13/2025/NBHM(R.P)/R\&D II/1137). The first author expresses gratitude to Prof.~Dominic Breit, University of Duisburg-Essen for clarifying doubts about his article \cite{Breit:2015}, assistance that benefited this manuscript.

\medskip\noindent	\textbf{Declarations:}

\noindent 	\textbf{Ethical Approval:}   Not applicable.

\noindent  \textbf{Conflict of interest statement: } On behalf of all authors, the corresponding author states that there is no conflict of interest.


\noindent 	\textbf{Authors' contributions: } All authors have contributed equally.

\noindent 	\textbf{Funding:} Austrian Science Fund (FWF), Project Number: P 34681 (\href{https://www.fwf.ac.at/en/research-radar/10.55776/P34681}{10.55776/P34681})  and \href{https://www.fwf.ac.at/forschungsradar/10.55776/ESP4373225}{10.55776/ESP4373225} (A. Kumar).\\
Portuguese Foundation for Science and Technology: Grant UID/04106/2025, \url{https://doi.org/10.54499/UID/04106/2025}, and Ministry of Science and Higher Education of the Republic of Kazakhstan: Grant AP23486218 (H. B. de Oliveira). \\
DST-SERB, India, MTR/2021/000066 (M. T. Mohan).

\noindent 	\textbf{Availability of data and materials: } Not applicable.

\newpage

\bibliographystyle{plain}
\bibliography{Power_law_NSV}

\end{document}